\documentclass[11pt]{article}
\setlength{\textwidth}{15cm}
\setlength{\textheight}{23cm}
\setlength{\oddsidemargin}{0cm}
\setlength{\evensidemargin}{0cm}
\setlength{\topmargin}{-.5cm}
\setlength{\abovedisplayskip}{3mm}
\setlength{\belowdisplayskip}{3mm}
\setlength{\abovedisplayshortskip}{3mm}
\setlength{\belowdisplayshortskip}{3mm}
\usepackage{pgf,tikz}
\usetikzlibrary{arrows}
\usetikzlibrary{snakes}
\usetikzlibrary{positioning}

\parskip=1.5mm
\usepackage{dsfont}
\usepackage{graphicx,color}
\usepackage{amsmath}
\usepackage{makeidx}
\usepackage{mathrsfs}
\makeindex
\usepackage{amssymb}
\usepackage{amsthm}
\numberwithin{equation}{section}
\usepackage{algcompatible} 
\usepackage{algorithm}
\usepackage{setspace} 
\usepackage{caption}
\usepackage{subcaption}

\newtheorem{thm}{Theorem}
\newtheorem{lema}{Lemma}
\newtheorem{cor}{Corollary}					
\newtheorem{prop}{Proposition}
\theoremstyle{definition}
\newtheorem{defi}{Definition}
\newtheorem{obs}{Remark}

\newtheorem{ass}{\indent Assumption}

\def\p{^{\prime}}
\def\e{^{(\varepsilon)}}
\def\d{^{(\delta)}}

\def\Prm{^{(\delta,\ell,E,\tau)}}
\def\cI{\mathcal{I}}

\def\cB{\mathcal{B}}
\def\U{\mathrm{U}}
\def\R{\mathbb{R}}
\def\E{\mathbb{E}}
\def\Z{\mathbb{Z}}
\def\N{\mathbb{N}}
\def\P{\mathbb{P}}
\def\T{T}
\def\bU{\bar{\U}}

\def\given{\mid}
\def\one{\mathds{1}}

\def\Lamepi{\Lambda_{\varepsilon}}

\def\mproT{\mu_{\U^{(\varepsilon)}_{[0,T]}}}

\def\la{\lambda}
\def\al{\alpha}
\def\ep{\varepsilon}

\def\0{^{(0)}}

\title{\bf Hydrodynamic Limit for Spatially Structured Interacting Neurons}

\author{Aline Duarte, Guilherme Ost and Andr\'es A. Rodr\'iguez \thanks{e-mail addresses: \tt{aaariasr@unal.edu.co}, \tt{aline.duart@gmail} and \tt{guilhermeost@gmail.com}}
\vspace{.5cm} \\
{\it {Universidade de S\~ao Paulo} and GSSI - L'Aquila }
}

\date{\today}

\begin{document}

\maketitle

\begin{abstract}
We study the hydrodynamic limit of a stochastic system of neurons whose interactions are not of mean-field type and are produced by chemical and electrical synapses, and leak currents.
The system consists of $\ep^{-2}$ neurons embedded in $[0,1)^2$, each
spiking randomly according to a point process with rate depending on both its membrane potential and position. When  neuron $i$ spikes, its membrane potential
is reset to $0$ while the membrane potential of $j$ is increased by a positive value $\ep^2 a(i,j)$, if $i$ influences $j$.
Furthermore, between consecutive spikes, 
the system follows a deterministic motion due both to
electrical synapses and leak currents. 
The electrical synapses are involved in the synchronization of the membrane potentials of the  neurons, while the leak currents inhibit the activity of all neurons, attracting simultaneously their membrane potentials to 0. We show that the empirical distribution of the membrane potentials converges, as $\ep$ vanishes, to a probability density $\rho_t(u,r)$ which is proved to obey a non linear PDE of Hyperbolic type.

\end{abstract}
{\it Key words} : Hydrodynamic limit, Piecewise deterministic Markov process, Neuronal Systems, Interacting particle systems\\

{\it AMS Classification}  : 60F17, 60K35, 60J25

\section{Introduction}
In this paper we present a stochastic process which describes a population of spatially structured interacting neurons.  Our aim is to study the hydrodynamical limit of such process and characterize its limit law as well.  Despite of its own interest in mathematics, the analysis of hydrodynamical behavior of neuronal systems is an important issue in neurobiology. For instance,  the most common imaging techniques, including EEG and fMRI, do not measure individual neuron activity but rather a resulting effect driven by interactions of large subpopulations of neurons. Thus, the rigorous mathematical modeling of EEG and fMRI data requires a collective description (a ``macroscopic equation'') derived from many interacting neurons (``large microscopic systems''), a typical setting of study on hydrodynamical limits of stochastic particle systems.

In a nutshell, neurons are electrically  excitable cells whose activity consist in sudden peaks, called action potentials and often referred to as \textit{spikes}. More specifically, spikes are short-lasting electrical pulses in the membrane potential of the cell and the higher the membrane potential the higher the probability of a spike to occur.
Thus, it is quite natural to assume that the generating mechanism of spikes is given by a point process in which the spiking rate of a given neuron depends on its membrane potential.  
In this paper, we work under that assumption and additionally, assume that the membrane potential evolves under the effect of chemical and electrical synapses, and leak currents.

{\it Electrical synapses} are due to so-called gap-junction channels between neurons which induce a constant sharing of potential. The unique aspect of electrical synapses is their reciprocity. This means they are neither excitatory nor inhibitory but rather synchronizer. For each pair of neurons $(i,j)$, we modulate this synchronizing strength by $b(i,j)$, where $(i,j)\mapsto b(i,j)$ is a nonnegative symmetric function. For instance, if $N$ is the size of the set of neurons and $b(i,j)=N^{-1}$ for $i\neq j$ and $b(i,i)=0$,  the electrical synapses would push the membrane potential of each neuron to the average membrane potential of the system. In the general case, the membrane potential of each neuron is also attracted to a mean value, although this value may vary for each neuron depending on the shape of the function $b(i,j).$ 


In contrast with electrical synapses, {\it chemical synapses} are point events which can be described as follows. Each neuron $i$ with membrane potential $\U$ spikes randomly
at rate $\varphi(\U,i)$, where $\U\mapsto\varphi(\U,i)$ is a non decreasing function, positive at $\U>0$ and vanishing at $0$. This last assumption implies the absence of external stimuli.
When neuron $i$ spikes, its membrane potential is immediately reset to a resting potential $0$. Simultaneously, the neurons which are influenced by neuron $i$ receive an additional positive value to their membrane potential. Specifically, the membrane potential of neuron $j$ is increased by the value $a(i,j)$ in each spike of $i$, if the latter influences the former.  The positiveness of the function $(i,j)\mapsto a(i,j)$ means that all chemical synapses are of the excitatory type.

Additionally to the synapses, neurons loose potential to the environment along time due to leakage channels which  pushes down the membrane potential of each neuron toward the resting state. 
This constant outgoing flow of potential is referred to, in the neurobiological literature, as {\it leak currents.} For an account on these subjects we refer the reader to \cite{Gerstner:2002:SNM:583784}. 

Our model is inspired by the ones introduced in \cite{GalEva:13}, \cite{Errico:14} and \cite{AG:14}. 
For a critical reader’s guide to these papers - together with the one in \cite{Evafou:14} - we refer to \cite{EvaGalves:15}.
Our model is also an example of {\it piecewise deterministic Markov processes} introduced in 1984 by Davis in \cite{Davis:84}. Such processes combine random jump events, in our case due to the chemical synapses, with deterministic continuous evolutions, in our case due both to electrical synapses and leak currents.  The piecewise deterministic Markov processes have been used also to model neuronal systems by other authors, see for instance the papers \cite{Thieullen:12}, \cite{Errico:14}, \cite{AG:14}, \cite{Evafou:14} and  \cite{Robert:14}.

In the study of Hydrodynamic limits a {\it mean-field} type assumption is quite frequent. This means that $a(i,j)=b(i,j)=N^{-1}$ for any pair of neurons $(i,j),$  with $N$ being the size of the population of neurons.
For recent neuromathematical models adopting the mean-field assumption see, among others, the models in \cite{Errico:14} and \cite{Evafou:14}.  However, a more realistic description should incorporate the mutual distance among neurons. In order to achieve such accurate description, we use Kac potentials ideas
and techniques developed for such potentials in statistical mechanics. In our context, this means that the functions $a(i,j)$ and $b(i,j)$ considered here are quite general but are scaled by  factor $N^{-1}$, if $N$ stands for the size of the set of neurons.
For an account on hydrodynamic limits and Kac potentials we refer respectively to  \cite{ANAERRICO}-\cite{Landim} and \cite{Presutti:08}. 

To the best of our knowledge, it is the first time that stochastic modeling of spatially structured neuronal networks whose occurrences of spikes are described by Poisson processes has been addressed. Most of the mathematical models of neuronal system taking into account also spatial locations have been done with Brownian random components, see for instance \cite{touboul2014}.

For each $\varepsilon>0,$ the set of neurons is denoted by $\Lamepi=\varepsilon\Z^2\cap [0,1)^2$ and the state of our system at time $t\geq 0$
is specified  by $\U^{(\varepsilon)}(t)=\left(\U^{(\varepsilon)}_i(t), i\in \Lamepi \right)$, with $\U^{(\varepsilon)}_i(t)\in \R_{+}$. For each neuron $i\in\Lamepi$ and time $t\geq 0,$ $\U\e_i(t)$ represents the membrane potential of neuron $i$ at time $t$. Our main result, Theorem \ref{thm:2}, shows that the empirical distribution of the membrane potentials converges, as $\varepsilon\to 0$, to a law having, at each time $t$,  $\rho_t(u,r)dudr$ as a probability density. This means that, in the limit, for any set $ C \subset [0,1]^2$, interval $I \subset \mathbb R _+$ and time $t\geq 0$,
$\int_{I}\int_{C}\rho_t(u,r)dudr$ is the limit fraction of neurons located in $C$ whose membrane potentials are inside of $I$ at time $t$. This limit density  $\rho_t(u,r)$ is the unique solution of a nonlinear PDE of the hyperbolic type. 

The strategy for proving this theorem can be described in the following way. We identify the process with its empirical distribution and, as a first step, we show that the sequence of laws of the empirical distributions is tight.
Once tightness is proven, we identify the limiting law as supported by the solutions of the PDE by a coupling argument. Specifically, we first approximate the true process by a discrete space and time family of processes $Y^{(\ep,\delta,\ell, E, \tau)}$ for which the analysis of the Hydrodynamic limit is somehow easier.  
Once established the convergence to $Y\Prm$, we obtain the result by taking $\delta,\ell,E,\tau\to 0.$ A similar approach was recently used in \cite{Errico:14}, however, in the present work, we generalize their approach to the case of spatially structured interacting neurons. Finally, 
we show the solutions of the PDE are unique to get full convergence.



We organize this paper is the following way. In section \ref{sec:2}, we introduce our model and state the mains results, namely, Theorem \ref{thm:1}, \ref{thm:2} and \ref{thm:3}. In addition, at the end of the section, we argue that it is possible to work, without lost of generality, under a stronger condition on the spiking rate $\varphi$. In section \ref{sec:3}, we prove Theorem \ref{thm:1} under this stronger condition.  
In Section \ref{sec:4}, we show tightness for the sequence of laws of the empirical distributions. In section \ref{sec:5}, we define the family of  auxiliary processes as well as the coupling algorithm for the true and auxiliary processes. Moreover, we state Theorem \ref{thm:4} which claims that the auxiliary and true processes are close to each other. Its proof is postponed to Appendix \ref{proof thm 4}.
In section \ref{sec:6}, we state the hydrodynamic limit for the auxiliary process whose proof is given in the Appendix \ref{proof Thm5}.  In section \ref{sec:7}, we conclude the  proofs of Theorem \ref{thm:2} and Theorem \ref{thm:3}.  
In the Appendix \ref{generalcase}, we prove our results for general firing rates $\varphi$.

\section{Model Definition and Main Results}
\label{sec:2}

For each $\ep>0$, let $\Lamepi=\ep \Z^2\cap [0,1)^2$ be a $\ep$-mesh of the set $[0,1)^2$. The set $\Lamepi$ \ represents the set of neurons and its size is $|\Lamepi|=\ep^{-2},$ see figure ~\ref{fig1}. We consider a continuous time Markov process $(\U\e(t))_{t\geq 0}$ taking values in $\R_{+}^{\Lamepi}.$
For each  $t\geq 0$ and neuron $i\in \Lamepi$, $\U\e_i(t)$ models the membrane potential of neuron $i$ at time $t$. The global configuration at time $t\geq 0$ is denoted by 
$$\U^{(\ep)}(t)=(\U^{(\ep)}_i(t),i\in\Lamepi).$$
As usual in the theory of Markov processes, the dynamics of the processes is given through the infinitesimal generator $\mathcal{L}$. We assume that the action of $\mathcal{L}$ on any smooth test function $f:\R^{\Lamepi}_+\rightarrow \R$, is given by
\begin{equation}
\label{geradorU}
\mathcal{L}f(u)=\sum_{i \in \Lamepi} \varphi(u_i,i)[f(u+ \Delta_i(u))-f(u)]
- \sum_{i\in\Lamepi}\frac{\partial f}{\partial u_i}(u)\Big[\alpha u_i+\ep^2\sum_{j\in\Lamepi}b(i,j)(u_i-u_j)\Big],
\end{equation}
where for all $i\in\Lamepi$, the function $\Delta_i:\R^{\Lamepi}_+\rightarrow \R_+^{\Lamepi}$ is defined by
\begin{equation*}
(\Delta_i(u))_j=\left\lbrace
			\begin{array}{ll}
				\varepsilon^2 a(i,j), & \mbox{if} \ j\neq i \\
				-u_i, & \mbox{if} \ j=i
			\end{array}
	\right.,
\end{equation*}
with $a:[0,1)^2\times [0,1)^2\mapsto \R_+$ being a Lipschitz continuous function such that  $a(r,r)=0$ for all $r\in [0,1)^2$, $\alpha$ is a nonnegative parameter, $b:[0,1)^2\times [0,1)^2\mapsto \R_+$ is a symmetric Lipschitz continuous function also satisfying $b(r,r)=0$ for all $r\in [0,1)^2$, and

\begin{ass} 
\label{ass:1}
$\varphi\in C^1(\R_+\times [0,1)^2,\R_+)$ is increasing in the first variable such that for all $r\in[0,1)^2$, $\varphi(0,r)=0$.
\end{ass}
\begin{figure}[h]
\begin{center}
\begin{tikzpicture}
\draw[step=0.4cm, line width= 0.5pt] (0,0) grid (4.79,4.79);

\draw[dashed,line width=1.3pt] (0,4.8) -- (4.8,4.8);
\draw[dashed,line width=1.3pt] (4.8,0) -- (4.8,4.8);

\draw (0, -0.25) node {\textbf{(0,0)}};
\draw (0, 5.05) node {\textbf{(0,1)}};
\draw (5.05, -0.25) node {\textbf{(1,0)}};
\draw (5.05, 5.05) node {\textbf{(1,1)}};

\draw[snake=brace,line width=1pt]  (-0.1,2.4) -- (-0.1,2.8);
\draw (-0.4,2.6) node {\Large $\ep$};

\draw (2.4,5.3) node {\Large $\Lamepi$};

\foreach \x in {0, 0.4, 0.8, 1.2, 1.6, 2,2.4, 2.8, 3.2, 3.6,4,  4.4}
\foreach \y in {0, 0.4 ,0.8 ,1.2 ,1.6 ,2.0 , 2.4 , 2.8,3.2, 3.6, 4, 4.4}
{
\filldraw (\x,\y) circle (1.3pt);
}
\end{tikzpicture}
\end{center}
\caption{The $\epsilon$-mesh $\Lamepi$ of the set $[0,1)^2$.}
\label{fig1}
\end{figure}
The first term in (\ref{geradorU}) depicts how the chemical synapses are incorporated in our  model. A neuron $i$ with potential $u$ spikes at rate $\varphi(u,i)$. Intuitively this means 
that for any initial configuration $u\in \R_+^{\Lamepi}$ of the membrane potentials
$$\P(\U(t)=u+\Delta_i(u)\given \U(0)=u)=\varphi(u_i,i)t+o(t), \ \mbox{as} \ t \rightarrow 0.$$
Thus, the function $\varphi(\cdot,i)$ is called \textit{firing or spiking rate} of the neuron $i$. Notice that under such assumption neurons may have different spike rates, i.e, the function $\varphi(\cdot, i)$ may be different from $\varphi(\cdot, j).$
The function $a(\cdot, \cdot)$, appearing in the definition of $\Delta_i(\cdot)$, mimics the chemical synapses. The value $\ep^2a(i,j)$ corresponds to the energy added to the membrane potential of neuron $j$ when neuron $i$ spikes. 

The second term in (\ref{geradorU}) represents both  electrical synapses and leak currents. They describe the deterministic time evolution of the system between two consecutive spikes. More specifically, if there is no spikes in an interval of time $[a,b]$, the membrane potential of each neuron $i\in\Lamepi$ obeys the following ordinary differential equation 
\begin{equation}
\label{detmotion1}
\frac{d}{dt}\U\e_i(t)=-\alpha \U\e_i(t)-\ep^2\sum_{j\in\Lamepi}b(i,j)\left[\U\e_i(t)-\U\e_j(t)\right].
\end{equation} 
The function $b(\cdot, \cdot)$ incorporates the action of the gap-junction channels. The value $\ep^2b(i,j)$ corresponds the synchronization strength between the neurons $i$ and $j.$ Notice also that the first term of the right-hand side of (\ref{detmotion1}) pushes the membrane potential of neuron $i$ to the resting state $0$, so that we interpret $\al$ as the rate in which the membrane potential of each neuron decreases due to leak channels. 

Defining $\lambda\e_i=\ep^2\sum_{j\in\Lamepi}b(i,j)$ and $\tilde{b}(i,j)=\big(\lambda\e_i\big)^{-1} b(i,j)$,  automatically $i\mapsto\lambda\e_{i}$ and $(i,j)\mapsto \tilde{b}(i,j)$ are Lipschitz continuous functions, $\ep^2\sum_{j\in\Lamepi}\tilde{b}(i,j)=1$ and we can rewrite the ODE \eqref{detmotion1} as
\begin{equation}
\label{detrmotion2}
\frac{d}{dt}\U\e_i(t)=-\alpha \U\e_i(t)-\lambda\e_i\big[\U\e_i(t)-\bU\e_i(t)\big],
\end{equation} 
where for each $t\geq 0$ and $i\in\Lamepi$, $$\bU\e_i(t)=\ep^2\sum_{j\in\Lamepi}\tilde b(i,j)\U\e_j(t).$$ 
We call $\bU\e_i(t)$ the \textit{local average potential} of the neuron $i$ at time $t$. Thus, the second term of both ODE's is, in fact,  pushing with rate $\lambda\e_i$ the membrane potential of neuron $i$ to an average value which depends on $i$ itself. 

We shall study a simpler situation in which all the rates $\lambda\e_i$ - and consequently the function $(i,j)\mapsto  b(i,j)$ - do not change with $\ep$, keeping all others properties. In this way, hereafter we shall assume that there exist functions $\la:[0,1)^2\mapsto \R_+$ and $b:[0,1)^2\times [0,1)^2 \mapsto \R_+$ satisfying:

\begin{enumerate}
\item $\la$ is Lipschitz continuous;
\item $b$ is Lipschitz continuous such that for each $i\in\Lamepi$, $\ep^2\sum_{j\in\Lamepi}b(i,j)=1;$
\item Between consecutive spikes the membrane potential of each neuron $i\in\Lamepi$ obeys
\begin{equation}
\label{detrmotion3}
\frac{d}{dt}\U\e_t(i)=-\alpha \U\e_t(i)-\la_i(\U\e_t(i)-\bU\e_i(t)),
\end{equation} 
where for each $t\geq 0$ and $i\in\Lamepi$, $\bU\e_i(t)=\ep^2\sum_{j\in\Lamepi}b(i,j)\U\e_j(t)$.
\end{enumerate}
%
%
For each $\ep>0$, the existence and uniqueness of the solution of \eqref{detrmotion3} is simple, since it is a finite system of linear differential equations. For each $t\geq 0$, the unique solution, with value $u\in \R_+^{\Lamepi}$ at $0$, is given by $\Psi_t(u)=e^{At}u,$ where $A$ is a symmetric matrix whose entries depend on $\alpha, \ep^2$, $b$ and $\la:$
\begin{equation}
\label{solode}
A=(A_{i,j}:i,j\in\Lamepi), \ A_{i,j}=\left\lbrace
			\begin{array}{ll}
				\ep^2\la_ib(i,j), & \mbox{if} \ i\neq j \\
				-\alpha-\la_i, & \mbox{if} \ i=j
			\end{array}
	\right. .
\end{equation}
In the result below, Theorem 1, we prove the existence and uniqueness of the process describe above and provide an uniform control on the maximal membrane potential of the system. 
The proof of Theorem \ref{thm:1} is omitted here since it is analogous, modulo a small modification of the notation, to the proof of Theorem 1 given in \cite{Errico:14}.
In what follows, for any vector $u\in\R_+^{\Lamepi}$, $$||u||=\max_{i\in\Lamepi}\{u_i\}.$$ With this notation, the maximum membrane potential at time $t$ is $||\U\e(t)||$.

\begin{thm}
\label{thm:1}
Assume the function $\varphi$ satisfies the Assumption \ref{ass:1}.
\begin{enumerate}
\item Given $\varepsilon>0 $ and $u \in \R_+^{\Lamepi},$ there exists a unique strong Markov process $\U^{(\varepsilon)}(t) $ taking values in $\R_+^{\Lamepi} $ starting from $u$  whose generator  is given by \eqref{geradorU}.

\item Let $P_{u}\e$ be the probability law under which the initial condition of the process $\U^{(\varepsilon)}(t)$ is $ \U^{(\varepsilon)}(0) = u\in \R_+^{\Lamepi}.$ Then for any $R > 0$ and $T> 0 $ there exists  a constant $C>0$ such that
\begin{equation}
\label{UnifBound}
 \sup_{ u: \| u\|\le R} P\e_{u}\Big[ \sup_{t\le T }\|\U^{\varepsilon}(t) \| < C\Big]  \geq 1 - c_1 e^{ - {c_2 \varepsilon^{-2}} } ,
\end{equation}
where $c_1$ and $c_2$ are suitable positive constants. All the constants $C,c_1$ and $c_2$ do not depend on $\varepsilon$.
\end{enumerate}
\end{thm}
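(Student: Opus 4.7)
My plan is to realize $\U^{(\ep)}$ as a piecewise deterministic Markov process in the sense of Davis~\cite{Davis:84}. Between spikes, the membrane potentials follow the linear flow $\Psi_t(u)=e^{At}u$ from \eqref{solode}; since the off-diagonal entries of $A$ are non-negative ($A$ is Metzler), $\Psi_t$ preserves $\R_+^{\Lamepi}$, and since every row sum of $A$ equals $-\alpha\le 0$ (using $\ep^2\sum_j b(i,j)=1$), the flow is contractive. Starting from $u\in\R_+^{\Lamepi}$, I would construct the process inductively on its spike times: sample $T_1$ from the survival function $\exp\!\bigl(-\int_0^t\sum_{i\in\Lamepi}\varphi(\Psi_s(u)_i,i)\,ds\bigr)$, draw the spiking index $i$ with probability proportional to $\varphi(\Psi_{T_1}(u)_i,i)$, apply the jump $\Delta_i$, and restart. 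The resulting process has generator $\mathcal L$ and is strong Markov by standard PDMP theory, while uniqueness follows from uniqueness for the associated martingale problem. What is left is to exclude explosion $T_n\to\infty$, which I obtain as a by-product of part (ii).

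The argument for (ii) hinges on a deterministic pathwise control of the max norm $\|\U^{(\ep)}(t)\|$. Between consecutive spikes, if $i^\star$ attains the max at time $t$, then \eqref{detrmotion3} together with $\bU^{(\ep)}_{i^\star}(t)\le\|\U^{(\ep)}(t)\|$ gives
\begin{equation*}
\frac{d}{dt}\U^{(\ep)}_{i^\star}(t)=-\alpha\U^{(\ep)}_{i^\star}(t)-\la_{i^\star}\bigl(\U^{(\ep)}_{i^\star}(t)-\bU^{(\ep)}_{i^\star}(t)\bigr)\le 0,
\end{equation*}
so the max norm is non-increasing along the flow. At a spike of any neuron $i$, the reset sends $\U^{(\ep)}_i\mapsto 0$ and bumps every other coordinate by at most $\ep^2\|a\|_\infty$, so the max norm grows by at most $\ep^2\|a\|_\infty$ per spike. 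Writing $N^{(\ep)}(t)$ for the number of spikes in $[0,t]$, these two facts yield the pathwise bound
\begin{equation*}
\|\U^{(\ep)}(t)\|\le\|u\|+\ep^2\|a\|_\infty\,N^{(\ep)}(t).
\end{equation*}

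The uniform estimate \eqref{UnifBound} then reduces to a Poisson large deviation closed by a bootstrap. Fix $R,T>0$, let $M>R$ be chosen below, and set $\tau_M=\inf\{t\ge 0:\|\U^{(\ep)}(t)\|>M\}$. On $[0,\tau_M\wedge T]$ the total spiking rate is bounded by $\ep^{-2}\Phi(M)$, where $\Phi(M)=\sup\{\varphi(v,r):v\le M,\,r\in[0,1]^2\}$ is finite by Assumption~\ref{ass:1}; hence $N^{(\ep)}(\tau_M\wedge T)$ is stochastically dominated by a Poisson random variable with mean $\mu_\ep=T\ep^{-2}\Phi(M)$. The pathwise bound forces $N^{(\ep)}(\tau_M)\ge\ep^{-2}(M-R)/\|a\|_\infty$ on $\{\tau_M\le T\}$, so if $M$ is picked so that $(M-R)/\|a\|_\infty>2T\Phi(M)$, the standard Chernoff estimate $\P(\mathrm{Poisson}(\mu)\ge 2\mu)\le e^{-\mu(2\log 2-1)}$ applied with $\mu=\mu_\ep$ produces $P^{(\ep)}_u(\tau_M\le T)\le c_1 e^{-c_2\ep^{-2}}$ uniformly in $\|u\|\le R$, giving \eqref{UnifBound} with $C=M$. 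Non-explosion follows by letting $M\to\infty$. The main obstacle is closing this bootstrap: without a growth hypothesis on $\varphi$ an admissible $M$ need not exist, which is exactly why at the end of Section~\ref{sec:2} the authors reduce the problem, without loss of generality, to a stronger (boundedness-type) condition on $\varphi$ before carrying out the proof.
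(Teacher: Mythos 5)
Your construction of the process as a piecewise deterministic Markov process, and the deterministic estimate $\|\U\e(t)\|\le\|u\|+\ep^2\|a\|_\infty N\e(t)$, are both correct: the flow is non-expansive in max norm because $\bU\e_i$ is a convex combination of the $\U\e_j$'s (since $\ep^2\sum_j b(i,j)=1$), and each spike raises the max by at most $\ep^2\|a\|_\infty$. The subsequent Poisson large-deviation bootstrap is exactly what the paper runs to prove Proposition~\ref{thm:1bis}, which is the bounded-rate counterpart of Theorem~\ref{thm:1} under Assumption~\ref{ass:3}. So in the bounded-$\varphi$ regime your proposal does reproduce the argument the paper relies on.

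The gap lies in the final paragraph. You correctly observe that the bootstrap inequality $(M-R)/\|a\|_\infty>2T\Phi(M)$ may have no solution $M$ for a fast-growing $\varphi$, but you then dispose of the difficulty by saying the authors ``reduce the problem, without loss of generality, to a stronger (boundedness-type) condition on $\varphi$ before carrying out the proof.'' That resolution is circular in the logical structure of the paper: the without-loss-of-generality reduction to Assumption~\ref{ass:3} (carried out in Appendix~\ref{generalcase}) \emph{invokes the estimate \eqref{UnifBound} of Theorem~\ref{thm:1}} to justify modifying $\varphi$ above a level $C$ that, with high probability, is never reached. Hence Theorem~\ref{thm:1} must be established under the weaker Assumption~\ref{ass:1}, for arbitrary $T$, \emph{before} any truncation of $\varphi$ is legitimate; one cannot prove Theorem~\ref{thm:1} by already assuming the bounded-rate version. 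The paper handles this by deferring to the proof of Theorem 1 in \cite{Errico:14}, which is designed for a general (unbounded, non-decreasing) rate; your single-step bootstrap does not cover that case, and the obvious iteration (refining to intervals of length $\delta_k$ adapted to the running bound $M_k$) again closes only under a growth restriction on $\varphi$ preventing the associated scheme from exploding on $[0,T]$. As it stands, your write-up proves Proposition~\ref{thm:1bis} rather than Theorem~\ref{thm:1}, and leaves the actual statement open.
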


We now focus on the hydrodynamic limit of the process $(\U\e(t))_{t\geq 0}$. We suppose that for all $\ep>0$ the following assumption holds. 
\begin{ass}
\label{ass:2}
There exists a smooth function $\psi_0:\R_+\times [0,1)^2\mapsto \R_+$ fulfilling the conditions:
\begin{enumerate}
\item For each $r \in [0,1)^2,$ $\psi_0(\cdot,r)$ is a probability density on $\R_+$ whose support is $[0,R_0]$;
\item $\psi_0(\cdot,r) > 0 $ on $ [0, R_0) ;$
\item $\left(\U^{(\varepsilon)}_i(0)\right)_{i\in\Lamepi}$ is a sequence of independent random variables,  
$\U\e_i(0)$ being distributed according to $\psi_0(u,i)du$.
\end{enumerate}
\end{ass}
\begin{obs}
The above assumption can be weakened. Indeed, all proofs work under the assumption in which items $(i)$ and $(ii)$ are replaced by $(i')$ and $(ii')$ where
\begin{enumerate}
\item[(i')] For each $r \in [0,1)^2,$ $\psi_0(\cdot,r)$ is a probability density on $\R_+$ with compact support $[0,R_0(r)]$; $\psi_0(\cdot,r) > 0 $ on $ [0, R_0(r)) .$ 
\item[(ii')] There exits a positive parameter $R_0$ such that $$\sup_{r\in(0,1]^2}R_0(r)\leq R_0<\infty.$$
\end{enumerate}
\end{obs}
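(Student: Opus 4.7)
The plan is to audit the proofs of Theorems \ref{thm:1}--\ref{thm:3} and verify that each invocation of Assumption \ref{ass:2} needs only the uniform upper support bound $R_0$ from (ii') together with qualitative positivity of the density on the interior of its (possibly $r$-dependent) support. The key observation is that (i') and (ii') together imply the deterministic bound $\|\U\e(0)\| \leq R_0$ almost surely: by (i'), $\U\e_i(0)$ has law $\psi_0(\cdot,i)du$ supported in $[0,R_0(i)]$, and by (ii'), $R_0(i) \leq R_0$ for all $i \in \Lamepi$. Consequently Theorem \ref{thm:1} applies with initial radius $R=R_0$ and yields the same $\ep$-uniform Chernoff bound on $\sup_{t\leq T}\|\U\e(t)\|$ that underpins the rest of the paper.

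Proceeding section by section, I would check that the tightness argument of Section \ref{sec:4} depends on the initial configuration only through the uniform supremum bound $R_0$ and the coordinate independence in item (iii) of Assumption \ref{ass:2}, both of which are untouched by the weakening. The coupling with the auxiliary process $Y\Prm$ in Section \ref{sec:5} and its hydrodynamic analysis in Section \ref{sec:6} also use a common $\ep$-independent compact support for the initial empirical measure, and since $\bigcup_i [0,R_0(i)] \subseteq [0,R_0]$ the same arguments run through unchanged.

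The one point that requires genuine care is the identification and uniqueness of the limiting hyperbolic PDE, where the positivity of $\psi_0$ on the interior of its support enters. The weakening from ``$\psi_0(\cdot,r) > 0$ on $[0,R_0)$'' to ``$\psi_0(\cdot,r) > 0$ on $[0,R_0(r))$'' is benign because the characteristics of the PDE treat $r$ as a frozen parameter: the evolving support in $u$ decouples across spatial coordinates, so the original positivity argument can be carried out pointwise in $r$. I expect the main obstacle to be verifying that the $r$-dependent support $R_t(r)$ of $\rho_t(\cdot,r)$ varies regularly enough in $r$ to feed into the global estimates (for $\bU\e_i$ and the total mass); this follows from Lipschitz dependence of the characteristic flow on its initial data together with the uniform bound $R_0$, so no extra regularity of $R_0(\cdot)$ needs to be assumed. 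With this confirmed, every argument in the paper transfers verbatim to the setting (i'),(ii').
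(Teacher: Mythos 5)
Your proposal is correct and matches the paper's (implicit) justification: the paper offers no separate argument for this remark, and the intended reason is exactly what you identify, namely that items (i)--(ii) enter the proofs only through the uniform a.s.\ bound $\|\U\e(0)\|\leq R_0$ (feeding Theorem \ref{thm:1}, the tightness estimates, the partition $\cI_E$ of $[0,R_0]$, and the support bounds $R_n(r)\leq R_0+T\varphi^*a^*$), while smoothness of $\psi_0$ and positivity on the interior of its support are only used pointwise in $r$ and are preserved under (i')--(ii'). No gap to report.
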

Since the state space of the process changes with $\varepsilon$, it is convenient to identify our process $(\U\e(t))_{t\geq 0}$ as an element of a suitable space which is independent of $\varepsilon.$ The identification is achieved through the map
$$\R^{\Lamepi}_+\ni \U^{(\varepsilon)}(t) \mapsto \mu\e_t:=\epsilon^2 \sum_{i\in\Lamepi} \delta_{\left(\U_i^{(\varepsilon)}(t),  i\right)}.$$  
In this way we identify our process with the element $t\mapsto \mu\e_t$ of the Skorohod space $D(\R_+,\mathcal{S}^{\p})$, where $\mathcal{S}^{\p}$ is the Schwartz space of all smooth functions $\phi:\R_+\times [0,1)^2\rightarrow \R.$ The associated element $\mu\e_t$ has the nice biological interpretation of being the empirical distribution of the membrane potential of the neurons at time $t$. 

For any fixed $T>0$, we denote the restriction of the process to $[0,T]$ by $\mu\e_{[0,T]}$ which belongs to the space $D\big([0,T],S^{\p}\big).$ We write ${\cal P}_{[0,T]}^{(\varepsilon)}$ to denote the law on $D\big([0,T],S^{\p}\big)$ of the processes $\mu\e_{[0,T]}$. Our main result shows that for any positive $T$, the 
sequence of laws ${\cal P}_{[0,T]}^{(\varepsilon)}$ converges, as $ \varepsilon \to 0 ,$ to a law ${\cal P}_{[0,T]}$ on $D\big([0,T],S^{\p}\big)$ which is supported by a deterministic trajectory $$\rho:=\left(\rho_t(u,r)dudr\right)_{t\in[0,T],u\in \R_+,r\in [0,1)^2}.$$ 
The function $\rho_t(u,r)$ is interpreted as the limit density function and is proved to solve the nonlinear PDE
\begin{equation}
 \label{PDE}
\frac{\partial \rho_t(u,r)}{\partial t }+ \frac{\partial [V(u,r,\rho_t) \rho_t(u,r)]}{\partial u }=-\varphi(u,r) \rho_t(u,r), \ \ t>0,u>0 \ \mbox{and}\  r\in[0,1)^2, 
\end{equation}
where $V(u,r,\rho_t)=-\al u-\la_r(u-\bar{u}_t(r))+p_t(r),$ 
where for each $t\geq0$ and $r\in[0,1)^2,$
\begin{equation}
\label{pq}
\bar{u}_t(r)=\int_{[0,1)^2}\int_{0}^{\infty}u
b(r,r')\rho_t(u,r)dudr\p,  \ p_t(r)=\int_{[0,1)^2}\int_{0}^{\infty}
a(r\p,r)\varphi(u,r\p)\rho_t(u,s)dudr\p
\end{equation}
are respectively 
is the limit average potential and the limit value added to the membrane potential of the neurons near to the position $r$.

The boundary conditions of \eqref{PDE} are specified by
\begin{equation}
\label{PDEbound1}
\rho_0(u,r)=v_0(u,r), \ \rho_t(0,r)=v_1(t,r),
\end{equation}
where $v_0(u,r)=\psi_0(u,r)$ is given, while $v_1(t,r)$ has to be derived together with \eqref{PDE}. From our analysis we deduce that
\begin{equation}
\label{PDEbound2}
v_1(t,r)=\frac{q_t(r)}{\la_r\bar{u}_t(r)+p_t(r)},
\end{equation}
where $q_t(r)$ is the limit spiking rate of neurons close to position $r$, i.e, 
\begin{equation}
q_t(r)=\int_{0}^{\infty}
\varphi(u,r)\rho_t(u,r)du.
\end{equation}
Since we may have $v_0(0,r)\neq v_1(0,r)$, i.e, $\psi_0(0,r)\neq \frac{q_0(r)}{ \la_r\bar{u}_0(r)+p_0(r)}$, the function $\rho_t(u,r)$ may not be continuous, so that we need a weak formulation of \eqref{PDE}.

\begin{defi}
A real-valued function $\R_+\times \R_+\times [0,1)^2\ni(t,u,r)\mapsto\rho_t(u,r)$ is said to be a weak solution of \eqref{PDE}-\eqref{PDEbound1} if 
for all smooth functions $\phi(u)$, the real-valued function $\R_+\times [0,1)^2 \ni (t,r)\mapsto \int_{0}^{\infty}\phi(u)\rho_t(u,r)du$ is continuous in $t$, differentiable in $t>0$ and
\begin{eqnarray}
\label{Weaksol}
&&{\frac{\partial}{\partial t}\int_{0}^{\infty}\phi(u)\rho_t(u,r)du -\int_{0}^{\infty}\phi\p(u)V(u,r,\rho_t)\rho_t(u,r)du- \phi(0)V(0,r,\rho_t)v_1(t,r)}\nonumber\\
\lefteqn{ \qquad =-\int_{0}^{\infty}\varphi(u,r)\phi(u)\rho_t(u,r)du,}\\ \nonumber
&& \int_{0}^{\infty}\phi(u)\rho_0(u,r)du=\int_{0}^{\infty}\phi(u)\psi_0(u,r)du,
\end{eqnarray}
where $V(u,r,\rho_t)=-u\al-\la_r(u-\bar{u}_t(r))+p_t(r),$
with $\bar{u}_t(r)$ and $p_t(r)$ as in \eqref{pq}.
\end{defi}

The solution of \eqref{Weaksol} can be computed explicitly by the method of characteristics. Characteristics are curves along which the PDE reduces to an ODE. They are defined by the equation
\begin{equation}
\label{Charac}
\frac{d x(t,r)}{dt}=V(x(t,r),r,\rho_t).
\end{equation}
The solution of \eqref{Charac} on the interval $[s,t]$, with value $u$ at $s$ is denoted by $\T_{s,t}(u,r)$, $u\in \R_+$.  Its explicit expression is given by:
\begin{equation}
\label{flowChar}
T_{s,t}(u,r)=e^{-(\alpha+\la_r)(t-s)}u+\int_s^t e^{-(\alpha+\la_r)(t-h)}[\la_r\bar{u}_h(r)+p_h(r)]dh.
\end{equation}

The statement of our main theorem is the following.

\begin{thm}
\label{thm:2}
Under assumptions \ref{ass:1} and \ref{ass:2}, for any fixed $T>0$,
\begin{equation}
{\cal P}^{(\varepsilon)}_{[0,T]} \stackrel{w}{\to} {\cal P}_{[0, T ] } \  \mbox{in} \ D \big( [0, T ]  , {\cal S}'\big) \ \mbox{as} \ \varepsilon \to 0 , 
\end{equation}
where $ {\cal P}_{ [0, T ] } $ is the law on $  D \big( [0, T ] , {\cal S}'\big) $ supported by the distribution-valued trajectory $ \omega_t $ given by
$$ \omega_t ( \phi ) = \int_{[0,1)^2}\int_0^\infty  \phi (u,r) \rho_t (u,r) dudr , \quad  t \in [0, T ],$$
for all $\phi \in \mathcal{S}.$
The function $\rho_t(u,r) $ is the unique weak solution of \eqref{PDE}-\eqref{PDEbound1} with $v_0=\psi_0$ and $v_1$ given by  \eqref{PDEbound2}. Furthermore, for any $t\geq 0$ and $r\in[0,1)^2$, $\rho_t(u,r)$ has compact support in $u$ and
\begin{equation}
\rho_t(0,r) = \frac{ q_t(r)}{ \lambda_r\bar u_t(r)+p_t(r) } \ \mbox{and} \ \int_{0}^{\infty}\rho_t(u,r)du=1.
\end{equation}
The explicit expression of the solution $\rho_t(u,r)$ for $u \geq T_{0,t}(0,r),$ is:
\begin{equation}
\label{solpde1}
\rho_t (u,r) =  \psi_0 \left(T^{-1}_{0,t}(u,r)\right)
\exp \left\{ - \int_0^t \left[ \varphi\left( T^{-1}_{s,t}(u,r),r\right) - \alpha - \lambda_r \right] ds  \right\},
\end{equation}
and for $u=T_{s,t}(0,r)$ for some $0<s\leq t$,
\begin{equation}
\label{solpde2}
\rho_t (u,r) = \frac{q_s(r)}{\lambda_r\bar u_s(r)+p_s(r)} \exp \left\{ -\int_{s}^t [\varphi\left(T_{s,h}(0,r),r\right)-\al-\la_r]dh\right\}.
\end{equation}
\end{thm}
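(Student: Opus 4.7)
The plan is to follow the roadmap laid out in the introduction: identify $(\U\e(t))_{t\geq 0}$ with the measure-valued process $(\mu\e_t)$, prove tightness of the laws $\mathcal{P}^{(\varepsilon)}_{[0,T]}$ in $D([0,T],\mathcal{S}^{\p})$, and then identify every subsequential limit as the deterministic trajectory supported by the density $\rho_t(u,r)$ solving \eqref{PDE}. For tightness I would apply Aldous' criterion together with the uniform bound \eqref{UnifBound} from Theorem \ref{thm:1}: fixing $\phi\in\mathcal{S}$ and applying the generator \eqref{geradorU} to the linear functional $F_\phi(u)=\ep^2\sum_i \phi(u_i,i)$, Dynkin's formula writes $\mu\e_t(\phi)$ as the sum of a predictable drift (bounded uniformly in $\varepsilon$ on the event in \eqref{UnifBound}) and a martingale whose quadratic variation is $O(\ep^2)$, which gives control of modulus of continuity. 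This yields relative compactness of $\{\mathcal{P}^{(\varepsilon)}_{[0,T]}\}_\ep$.

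Next I would carry out the coupling step. Introduce the auxiliary process $Y\Prm$ obtained by discretizing space (into cells $C_m$ of mesh $\ell$), time (into steps $\delta$), potential (into energy levels below a cutoff $E$ with a further truncation $\tau$), along the lines of \cite{Errico:14}. The key point is that on $Y\Prm$ the empirical density on each cell evolves by explicit, ODE-driven rules, so its hydrodynamic limit as $\ep\to 0$ (with the other parameters fixed) can be derived by propagation of chaos / law of large numbers on each cell and yields a discretized version of \eqref{PDE}. Theorem \ref{thm:4} provides an $L^1$-type closeness between $\mu\e$ and the empirical measure of $Y\Prm$ uniformly in $\ep$, so that the two sequences share the same limit points; sending $\delta,\ell,E,\tau\to 0$ in the discretized PDE recovers the weak formulation \eqref{Weaksol}, with the boundary term $\phi(0)V(0,r,\rho_t)v_1(t,r)$ coming precisely from the influx of mass at the reset level $0$ after spikes.

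To obtain the explicit form of $v_1(t,r)$ in \eqref{PDEbound2} I would apply the weak equation \eqref{Weaksol} to a sequence of smooth test functions $\phi_n$ approximating $\one_{[0,\infty)}$, which kills the time-derivative term (since $\int \rho_t(u,r)du = 1$) and yields the mass-balance identity $-V(0,r,\rho_t)v_1(t,r)=-q_t(r)$; solving for $v_1$ gives \eqref{PDEbound2}, provided one has $\la_r \bar u_t(r)+p_t(r)>0$, which follows from Assumption \ref{ass:2}. The explicit expressions \eqref{solpde1}-\eqref{solpde2} are then obtained by the method of characteristics \eqref{Charac}-\eqref{flowChar}: along the flow $T_{s,t}(u,r)$, the PDE reduces to the linear ODE $\tfrac{d}{dt}\log\rho_t(T_{s,t}(u,r),r)=-\varphi(T_{s,t}(u,r),r)+\al+\la_r$, integrated with initial data $\psi_0$ for characteristics starting at time $0$ with positive potential and with boundary data $v_1(s,r)$ for characteristics emerging from $u=0$ at time $s$.

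The main obstacle, and the step I expect to require most care, is to upgrade convergence of subsequences to full convergence of $\mathcal{P}^{(\varepsilon)}_{[0,T]}$, which rests on \emph{uniqueness} of weak solutions to \eqref{Weaksol} with boundary data \eqref{PDEbound1}-\eqref{PDEbound2}. Here the non-local dependence of $V$ on $\rho_t$ through $\bar u_t(r)$ and $p_t(r)$, and the fact that the boundary value $v_1(t,r)$ itself depends on the solution, prevent a direct application of standard uniqueness results for linear transport equations. I would handle this by a Gronwall argument on a functional measuring the distance between two putative solutions on appropriate characteristic-adapted norms, using the compact support in $u$ (uniform in $t\in[0,T]$ thanks to the boundedness of the characteristics \eqref{flowChar}) together with the Lipschitz regularity of $a$, $b$, $\la$ and $\varphi$ to close the estimate.
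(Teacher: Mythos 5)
Your proposal reproduces the paper's overall scaffolding accurately (tightness via a martingale decomposition applied to $\mu\e_t(\phi)$, the auxiliary process $Y\Prm$ with the coupling estimate of Theorem \ref{thm:4}, identification of the limit density via the discretized evolution and the method of characteristics, and a uniqueness argument to upgrade subsequential to full convergence). Two steps, however, are handled differently in the paper, and the second difference is substantive. For tightness, the paper invokes Mitoma's reduction to real-valued processes together with the De Masi--Presutti second-moment criterion on $\gamma_t^{(\varepsilon)}=\mathcal{L}[\mu\e_t(\phi)]$ and $\sigma_t^{(\varepsilon)}$, rather than Aldous' criterion; both routes work, and this is a matter of convenience. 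The more significant divergence is in the uniqueness proof. You propose a direct analytic Gronwall estimate ``on characteristic-adapted norms,'' which is plausible in principle but left unspecified; it is delicate here precisely because the velocity field $V(u,r,\rho_t)$ and the boundary value $v_1(t,r)$ both depend nonlocally on $\rho_t$, so perturbing the solution perturbs the characteristics themselves and one must control this feedback. The paper sidesteps this by going probabilistic: it reads the weak equation \eqref{Weaksol} as the forward equation for a family of piecewise-deterministic Markov processes $(\U_r(t))_{r\in[0,1)^2}$ with time-dependent generator $\mathcal{L}_{\bar u_t(r),p_t(r)}$, couples two such families so that they share as many jump times as possible, and then closes a Gronwall iteration on $\gamma_r(t)=\E[|U_r(t)-V_r(t)|]$, using the Lipschitz continuity of $\varphi$, $a$, $b$, $\lambda$ and the compact supports to bound the coupling rate. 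This buys a clean contraction in an $L^1$/Wasserstein-type distance without having to track the characteristics explicitly. If you intend to pursue the purely analytic route, you would need to make the ``characteristic-adapted norm'' precise and show that the map $\rho\mapsto(\bar u_\cdot,p_\cdot,v_1)$ is Lipschitz in that norm so that the fixed-point/Gronwall argument closes; otherwise, adopting the paper's probabilistic reformulation is the more economical path. Your derivation of $v_1(t,r)$ by testing against (an approximation of) the constant function, using $\int_0^\infty\rho_t\,du=1$, is a correct and somewhat more direct justification of \eqref{PDEbound2} than the paper's, which obtains $v_1$ as a byproduct of the discretized construction.
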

\begin{thm}
\label{thm:3}
Assume \eqref{ass:1} and \eqref{ass:2}. If additionally for all $r\in[0,1)^2,$
$$\psi_0(0,r)=\frac{q_0(r)}{\lambda_r\bar u_0(r)+p_0(r)}, \ \mbox{where} \ q_0(r)=\int_{0}^{\infty} \varphi(u,r)\psi_0(u,r)du  $$
and  $$\bar u_0(r)=\int_{[0,1)^2}\int_{0}^{\infty} ub(r,r')\psi_0(u,r)dudr', \quad p_0(r)=\int_{[0,1)^2}\int_{0}^{\infty} a(r\p,r)\varphi(u,r)\psi_0(u,r)dudr', $$
then $\rho_t(u,r)$ is a strong solution of \eqref{PDE}-\eqref{PDEbound1} with $v_0=\psi_0$ and $v_1$ given by  \eqref{PDEbound2}.
\end{thm}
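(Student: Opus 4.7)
The plan is to upgrade the weak solution $\rho_t(u,r)$ from Theorem~\ref{thm:2} to a classical pointwise solution of \eqref{PDE}, using the additional compatibility hypothesis to eliminate the only potential discontinuity. By Theorem~\ref{thm:2}, $\rho_t(u,r)$ is given explicitly by \eqref{solpde1} and \eqref{solpde2} on either side of the interface characteristic $u=T_{0,t}(0,r)$ issuing from the corner $(0,0)$. In each open region the formulas define a smooth function, because $\psi_0,\varphi,a,b,\la$ are smooth (or Lipschitz), the characteristic flow $T_{s,t}(\cdot,r)$ defined by \eqref{flowChar} is $C^1$ in its arguments, and $\bar u_\cdot(r),p_\cdot(r),v_1(\cdot,r)$ are $C^1$ in time by a routine bootstrap from \eqref{pq} and \eqref{PDEbound2}. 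Hence the content of Theorem~\ref{thm:3} lives entirely at the interface.

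\textbf{Continuity across the interface.} Plugging $u=T_{0,t}(0,r)$ into \eqref{solpde1} and using the identity $T_{s,t}^{-1}(T_{0,t}(0,r),r)=T_{0,s}(0,r)$ yields
$$\psi_0(0,r)\exp\Bigl\{-\int_0^t\bigl[\varphi(T_{0,s}(0,r),r)-\al-\la_r\bigr]\,ds\Bigr\},$$
while taking $s\downarrow 0$ in \eqref{solpde2} produces exactly the same expression with the prefactor $\psi_0(0,r)$ replaced by $v_1(0,r)=q_0(r)/(\la_r\bar u_0(r)+p_0(r))$. The compatibility hypothesis of Theorem~\ref{thm:3} identifies these two prefactors, so $\rho_t(u,r)$ is continuous across the interface and hence on the entire set $\{t\ge 0,u\ge 0,r\in[0,1)^2\}$.

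\textbf{The PDE in each open region.} Rewriting the left-hand side of \eqref{PDE} as $(\partial_t+V\partial_u)\rho+(\partial_uV)\rho$ and using $\partial_uV=-\al-\la_r$, the PDE is equivalent, along a characteristic $t\mapsto(t,x(t,r),r)$ with $\dot x=V(x,r,\rho_t)$, to the linear ODE
$$\frac{d}{dt}\rho_t(x(t,r),r)=-\bigl[\varphi(x(t,r),r)-\al-\la_r\bigr]\,\rho_t(x(t,r),r).$$
Both \eqref{solpde1} (along characteristics leaving the initial line with value $\psi_0(\cdot,r)$) and \eqref{solpde2} (along characteristics leaving the boundary $u=0$ with value $v_1(s,r)$) were designed to solve this ODE, so \eqref{PDE} holds classically in each of the two open regions separated by the interface.

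\textbf{Regularity at the interface, the main obstacle.} The technical core of the proof is to upgrade continuity across $u=T_{0,t}(0,r)$ to matching of $\partial_t\rho$ and $\partial_u\rho$. My approach is to treat the characteristic flow as a local $C^1$ chart: parametrize characteristics reaching time $t$ by entry data $(s,u_0)$ in the quarter-plane corner $\{s\ge 0,u_0\ge 0\}$, with $s=0$ indexing region~1 and $u_0=0$ indexing region~2; this entry-to-arrival map is a local $C^1$ diffeomorphism at $(0,0)$ because $V(0,r,\rho_0)=\la_r\bar u_0(r)+p_0(r)>0$ (a positivity already built into Theorem~\ref{thm:2} as well-definedness of $v_1(0,r)$), which makes the two boundary pieces of the corner transverse. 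Under the hypothesis the entry datum $\psi_0(u_0,r)\mathbf 1_{s=0}+v_1(s,r)\mathbf 1_{u_0=0}$ is continuous at the corner; matching of its one-sided first derivatives there should follow by differentiating \eqref{PDEbound2} in $s$ at $s=0$ using the PDE for $\rho_t(u,r)$ at $u>0$ (available by step three) and the identity $\varphi(0,r)=0$, and comparing with the $u_0$-derivative of $\psi_0$ transported along the characteristic leaving $(0,0)$. This derivative-matching is the delicate point; once it is secured, the characteristic flow transfers $C^1$ regularity to $\rho_t(u,r)$ in a neighborhood of the interface, and the PDE of step three extends by continuity to the interface, making $\rho_t(u,r)$ a strong solution.
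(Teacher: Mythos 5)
Your first two steps reproduce the paper's proof of Theorem~\ref{thm:3} exactly: the explicit formulas \eqref{solpde1}--\eqref{solpde2} give a function that is $C^1$ and satisfies \eqref{PDE} pointwise away from the interface characteristic $u=T_{0,t}(0,r)$, and the compatibility hypothesis makes $\rho_t(u,r)$ continuous across that characteristic (the paper derives this by letting $s\downarrow 0$ in \eqref{solpde2} and invoking the continuity of $q_s,\,\bar u_s,\,p_s$ established earlier in Section~\ref{sec:7}). That is the entirety of the paper's argument; ``strong'' in the paper's usage is opposed only to the potential jump discontinuity in the \emph{values} of $\rho$ across $u=T_{0,t}(0,r)$ that the remark preceding the definition of weak solution warns about.

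Your step three, on $C^1$ matching of the one-sided derivatives across the interface, goes beyond what the paper proves, and you correctly single it out as the delicate point --- but you leave it unfinished (``should follow''). This is not a formality that can be waved through. Along a characteristic, continuity of the solution is propagated by the ODE for $\rho$ itself, while the transversal derivative obeys a separate linearized transport equation whose initial datum is its value at the corner $(t,u)=(0,0)$. Agreement of the two one-sided transversal derivatives there is a genuine \emph{first-order} compatibility condition, relating $\partial_u\psi_0(0,r)$ to a specific combination of $\partial_s v_1(0,r)$, $V(0,r,\rho_0)$ and $\varphi$; the hypothesis of Theorem~\ref{thm:3} only imposes the zeroth-order condition $\psi_0(0,r)=v_1(0,r)$ and does not yield it. So step three, as a stand-alone claim, contains a real gap. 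It is also unnecessary for the theorem as the paper understands it: Theorem~\ref{thm:3} asks only for the continuity you already obtain from steps one and two, so step three should simply be dropped.
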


The estimate in \eqref{UnifBound} provided by Theorem \ref{thm:1} implies that with probability going to $1$ as $\ep\to 0$ all the membrane potentials are uniformly bounded in the time interval $[0,T]$. Therefore, we are allowed to change the values of the spiking rate $\varphi$ for those values of membrane potentials not reached by the system of neurons. In doing this we can suppose without lost of generality that the function $\varphi$ satisfies the following stronger condition.
\begin{ass}
\label{ass:3}
$\varphi\in C^1(\R_+\times [0,1)^2,\R_+)$ is non-decreasing, Lipschitz continuous, bounded and constant for all $ u \geq u_0 $ for some $u_0 > 0.$  We denote by $\varphi^*=\|\varphi\|_\infty$ the sup norm of $\varphi$.
\end{ass}
The argument above is given precisely at the end of the Appendix \ref{generalcase}.

\section{Boundedness of the Membrane Potentials}
\label{sec:3}
\noindent

Hereafter, we work under the Assumption \ref{ass:3}. Exploiting such assumption we are able to prove a result stronger than in  Theorem  \ref{thm:1}. Its proof is analogous to the proof of Proposition 1 in \cite{Errico:14}, so that we omit it here.
\begin{prop}
\label{thm:1bis}
Let $\varphi$ be any function satisfying the Assumption \ref{ass:3}.
\begin{enumerate}
\item Given $\varepsilon>0 $ and $u \in \R_+^{\Lamepi} $ there exists a unique strong Markov process $\U\e(t) $ taking values in $\R_+^{\Lamepi} $ starting from $u$  whose generator  is given by \eqref{geradorU}.
\item
Let $N\e(t)$ be the total number of spikes in the time interval $[0,t]$.  For any $t\geq 0$, it holds
  \begin{equation}
  \label{stochbounds}
  N\e(t) \le \tilde{N}^{(\ep)}(t) \quad \text{stochastically,}
  \end{equation}
where $\tilde{N}\e(t)$ is the total number of events in the time interval $[0,t]$ of a Poison process with rate $\ep^{-2}\varphi^*$.
\item For any given  $T> 0 $, it holds that $$\sup_ {t\le T} \|\U\e(t) \| \le \| \U\e(0)\|+ a^{*}\ep^2N(T),$$ where $a^*=||a||_{\infty}$. In particular, there exist positive constants   $c_1$ and $c_2$  such that for any $\ep>0$ and $\U\e(0)$:
\begin{equation}
\label{boundass3}
 P^{(\ep)}_{ \U\e(0)}\Big[ \sup_{t\le T }\|\U\e(t) \| \le \| \U\e(0)\|+  2a^*\varphi^* T\Big]  \geq 1 - c_1e^{ -c_2 T \ep^{-2} } .
\end{equation}
The constants $c_1$ and $c_2$ do not depend on $\ep$.
\end{enumerate}
\end{prop}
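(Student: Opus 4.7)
The plan is a pathwise Poisson-thinning construction from which items (1), (2), and the first half of (3) fall out directly; the tail bound in (3) then follows from a standard Chernoff estimate for Poisson random variables. Under Assumption \ref{ass:3} the total firing rate satisfies $\sum_{i\in\Lamepi}\varphi(u_i,i)\leq |\Lamepi|\varphi^* = \ep^{-2}\varphi^*$ uniformly in the configuration $u$. I would fix on a single probability space a Poisson process $\tilde{N}\e$ on $[0,\infty)$ of rate $\ep^{-2}\varphi^*$ together with an i.i.d.\ sequence of marks $(i_k,U_k)$ uniform on $\Lamepi\times[0,1]$, independent of $\tilde{N}\e$. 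Starting from $u$, between consecutive event times of $\tilde{N}\e$ the process evolves by the linear ODE \eqref{detrmotion3}, whose unique solution is the flow $\Psi_t=e^{At}$ of \eqref{solode}; at the $k$-th event time $\tau_k$ one applies $\Delta_{i_k}(\cdot)$ iff $U_k\leq\varphi(\U\e_{i_k}(\tau_k^-),i_k)/\varphi^*$. This yields a càdlàg strong Markov process with generator \eqref{geradorU}, and uniqueness in law follows from the well-posed martingale problem for this bounded-rate generator (exactly as in \cite{Errico:14}); this settles (1). Item (2) is built into the construction, since $N\e$ is a thinning of $\tilde{N}\e$, so $N\e(t)\leq\tilde{N}\e(t)$ pathwise.

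For the pathwise bound in (3), set $M(t)=\|\U\e(t)\|$ and analyse the deterministic and jump phases separately. Between two consecutive spikes, the row-stochasticity $\ep^2\sum_{j\in\Lamepi}b(i,j)=1$ makes $\bU\e_i(t)$ a convex combination of the $\U\e_j(t)$'s, whence $\bU\e_i(t)\leq M(t)$ for every $i$. Picking an index $i^*$ attaining the maximum gives
$$\frac{d^+}{dt}M(t) = -\alpha\U\e_{i^*}(t)-\la_{i^*}\bigl(\U\e_{i^*}(t)-\bU\e_{i^*}(t)\bigr)\leq 0,$$
so $M$ is non-increasing off the spike times. At a spike of neuron $i$, the $i$-th coordinate drops to $0$ while each other coordinate increases by $\ep^2 a(i,j)\leq\ep^2 a^*$, so $M$ increases by at most $\ep^2 a^*$. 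Summing over the spikes in $[0,T]$ (finitely many a.s.\ by (2)) yields $\sup_{t\leq T}M(t)\leq \|\U\e(0)\|+\ep^2 a^* N\e(T)$.

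To obtain \eqref{boundass3}, combine this pathwise bound with (2): on the event $\{\tilde{N}\e(T)\leq 2\ep^{-2}\varphi^* T\}$ one has $\sup_{t\leq T}\|\U\e(t)\|\leq\|\U\e(0)\|+2a^*\varphi^* T$. Since $\tilde{N}\e(T)$ is Poisson with mean $\mu=\ep^{-2}\varphi^* T$, the standard Chernoff bound $\P(\tilde{N}\e(T)\geq 2\mu)\leq (e/4)^\mu$ delivers \eqref{boundass3} with $c_1=1$ and $c_2=\varphi^*(2\ln 2-1)$, both independent of $\ep$.

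The only step that truly uses the structure of the model is the monotonicity of $M$ between spikes, and this is where the normalisation $\ep^2\sum_j b(i,j)=1$ imposed in Section \ref{sec:2} is essential: without it, $\bU\e_i$ could exceed $M(t)$ and the electrical-synapse term would push the maximum up, so the deterministic bound in (3) would fail. Everything else is standard bookkeeping for bounded-rate pure-jump processes plus a Poisson tail estimate, which is why the authors refer the reader to the analogous argument in \cite{Errico:14}.
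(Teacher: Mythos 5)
Your proof is correct and is essentially the standard argument the paper is invoking by reference to Proposition~1 of \cite{Errico:14}: a Poisson-thinning/graphical construction gives existence, uniqueness, and the stochastic domination \eqref{stochbounds} simultaneously; the row-stochasticity $\ep^2\sum_j b(i,j)=1$ makes the flow contract the sup-norm between jumps while each spike adds at most $\ep^2 a^*$, yielding the pathwise bound; and a Poisson Chernoff estimate closes \eqref{boundass3}. The only cosmetic point is that the Dini derivative of $M(t)=\max_i \U\e_i(t)$ is the maximum over maximizing indices of the one-sided derivatives (so an inequality rather than an equality), but since each such derivative is $\le 0$ the conclusion is unaffected.
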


\section{Tightness of the Sequence of Laws $\mathcal{P}^{(\varepsilon)}_{[0,T]}$}
\label{sec:4}

In this section we shall prove the tightness of the sequence $\mathcal{P}^{(\varepsilon)}_{[0,T]}$ under Assumption \ref{ass:3}. This is the first step to prove the Theorem \ref{thm:2}. Although the proof of the tightness is similar to the one provided in \cite{Errico:14}, we decide to keep it here for sake of completeness.

\begin{prop}\label{prop:4}
Assume Assumption \ref{ass:3}. Assume also that $ \U^{(\varepsilon)}(0)= u^{(\varepsilon)} $ satisfies the  Assumption \ref{ass:2}. Then the sequence of laws $\mathcal{P}^{(\varepsilon)}_{[0,T]}$ of $\mu\e_{[0, T ]} $ is tight in  $ D \big( [0,T] , {\cal S}'\big) .$
\end{prop}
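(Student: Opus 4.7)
The plan is to invoke Mitoma's theorem, which reduces tightness of the $\cS'$-valued laws $\mathcal{P}^{(\ep)}_{[0,T]}$ to tightness, for each individual test function $\phi \in \cS$, of the real-valued processes $t \mapsto \mu\e_t(\phi) = \ep^2 \sum_{i \in \Lamepi} \phi(\U\e_i(t), i)$ in $D([0,T], \R)$. For a fixed such $\phi$, I set $F_\phi(u) := \ep^2 \sum_{i \in \Lamepi} \phi(u_i, i)$, so $\mu\e_t(\phi) = F_\phi(\U\e(t))$, and apply Dynkin's formula to write
$$F_\phi(\U\e(t)) = F_\phi(\U\e(0)) + \int_0^t \mathcal{L} F_\phi(\U\e(s))\, ds + M_t^\ep(\phi),$$
where $M^\ep(\phi)$ is a martingale with predictable quadratic variation
$$\langle M^\ep(\phi) \rangle_t = \int_0^t \sum_{i \in \Lamepi} \varphi(\U\e_i(s), i) \big[F_\phi(\U\e(s)+\Delta_i(\U\e(s))) - F_\phi(\U\e(s))\big]^2\, ds.$$
One-point tightness is immediate from the deterministic bound $|\mu\e_t(\phi)| \leq \|\phi\|_\infty$, so what remains is to verify Aldous' modulus-of-continuity condition. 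I do so on the high-probability event $E_R = \{\sup_{t \leq T}\|\U\e(t)\| \leq R\}$ with $R = R_0 + 2a^*\varphi^* T$, which satisfies $\P(E_R) \to 1$ by Proposition \ref{thm:1bis}.

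On $E_R$ two bounds suffice. First, $|\mathcal{L} F_\phi(u)| \leq C_\phi$ uniformly for $\|u\| \leq R$ and all $\ep > 0$. Indeed, a Taylor expansion $\phi(u_k + \ep^2 a(i,k), k) - \phi(u_k, k) = \ep^2 a(i,k) \partial_u\phi(u_k, k) + O(\ep^4)$ splits the chemical-synapse part of $\mathcal{L} F_\phi$ into a diagonal contribution $\ep^2 \sum_i \varphi(u_i,i)[\phi(0,i) - \phi(u_i,i)]$ of size $O(\ep^2\cdot|\Lamepi|) = O(1)$, plus an off-diagonal contribution in which the two $\ep^2$-factors (one from $F_\phi$, one from the expansion) balance the two $\ep^{-2}$-cardinalities from summing over $i, k \in \Lamepi$; the drift term is bounded directly by $\|\partial_u\phi\|_\infty(\al + 2\|\la\|_\infty) R$. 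Second, each jump $F_\phi(u + \Delta_i(u)) - F_\phi(u)$ is $O(\ep^2)$ by the same accounting, so summing squared jumps against the bounded rates $\varphi^*$ gives $\langle M^\ep(\phi)\rangle_T \leq C'_\phi \ep^2$ on $E_R$, and Doob's inequality yields $\sup_{t \leq T}|M_t^\ep(\phi)| \to 0$ in $L^2$ and in probability.

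Combining these, for any pair of $[0,T]$-valued stopping times $\tau_1 \leq \tau_2 \leq \tau_1 + \delta$ one has, on $E_R$,
$$|\mu\e_{\tau_2}(\phi) - \mu\e_{\tau_1}(\phi)| \leq C_\phi\, \delta + |M^\ep_{\tau_2}(\phi) - M^\ep_{\tau_1}(\phi)|,$$
and Chebyshev combined with the quadratic-variation estimate closes Aldous' criterion. Tightness in $D([0,T], \R)$ of each real-valued projection follows, and Mitoma's theorem then delivers tightness of $\mathcal{P}^{(\ep)}_{[0,T]}$ in $D([0,T], \cS')$. The main obstacle is really the bookkeeping for the $O(1)$ bound on $\mathcal{L} F_\phi$: it is precisely the Kac-type scaling $\ep^2 a(i,j)$ in the chemical-synapse jumps that prevents the double sum $\sum_i \sum_k a(i,k)$ of size $\ep^{-4}$ from blowing up once multiplied by the $\ep^4$ produced by one $\ep^2$ in $F_\phi$ and one in the Taylor expansion of the jump. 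Once this balance is checked, the rest reduces to the standard Dynkin/Aldous/Mitoma machinery.
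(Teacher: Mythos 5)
Your proposal is correct and is essentially the same argument as the paper's: both reduce via Mitoma's theorem to tightness of the real-valued processes $t\mapsto\mu_t^{(\varepsilon)}(\phi)$, and the two quantities you bound via Dynkin's formula and Aldous' criterion, namely $\mathcal{L}F_\phi$ and the integrand of $\langle M^{\varepsilon}(\phi)\rangle$ (the carr\'e du champ), are exactly the $\gamma_t^{(\varepsilon)}$ and $\sigma_t^{(\varepsilon)}$ the paper controls; the only difference is that the paper cites Theorem 2.6.2 of De Masi--Presutti as a packaged criterion while you unpack the martingale argument behind it, and the paper works under the a priori moment bounds from Proposition \ref{thm:1bis} rather than conditioning on the event $E_R$, but the bookkeeping (the $\varepsilon^2\cdot\varepsilon^2\cdot\varepsilon^{-2}\cdot\varepsilon^{-2}$ cancellation in the Kac scaling) is identical.
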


\begin{proof}
Indeed, for any test function $\phi \in {\cal S}$ and all $t \in [0, T ], $  we write
\begin{equation*}
\mu\e_t(\phi)=\varepsilon^2 \sum_{i\in\Lamepi} \phi ( \U\e_i (t),i ).
\end{equation*}
By Mitoma (1983), we have only to check  tightness of $\mu\e_t(\phi), t \in [0, T ]  \in D \big( [0, T]  , \R \big)$  for any fixed $ \phi \in {\cal S} .$
For that sake, we shall use a tightness criterion provided by Theorem 2.6.2 of De Masi and Presutti (1991).  
The criterion requires the existence of a positive constant  $c$ such that
 \begin{equation}
 \label{conditions}
\sup_{t \le T } E \Big[ \gamma_t^{(\varepsilon)}\Big]^2  \le c,\;\; \;\;\sup_{t \le T } \Big[ \sigma_t^{(\varepsilon)}\Big]^2  \le c,
\end{equation}
where $\gamma_t^{(\varepsilon)}$ and $\sigma_t^{(\varepsilon)}$ are respectively given by   
\begin{equation}
\gamma_t^{(\varepsilon)} = \mathcal{L} [\mu\e_t(\phi)], \  \sigma_t^{(\varepsilon)} =  \mathcal{L}[
  \mu\e_t(\phi)]^2 - 2  \mu\e_t(\phi) \mathcal{L}[\mu\e_t(\phi)],
\end{equation}
being $\mathcal{L}$ the generator given by \eqref{geradorU}.
In order to show \eqref{conditions}, we compute $\gamma_t^{(\varepsilon)}.$ By its definition, 
\begin{multline*}
\gamma_t^{(\varepsilon)}  = \varepsilon^2 \sum_j  \sum_{ i \neq j } \varphi(\U^{(\varepsilon)}_j (t),j) \left[ \phi \left( \U^{(\varepsilon)}_i (t) + \varepsilon^2 a(j,i),i \right) - \phi ( \U^{(\varepsilon)}_i (t),i)\right]\\
 + \varepsilon^2 \sum_j \varphi( \U\e_j (t),j) \left[ \phi ( 0,j) - \phi ( \U\e_j (t),j)\right]\\ 
-\alpha\varepsilon^2  \sum_{ j } \phi'  ( \U\e_j (t),j)\U\e_j (t) -\varepsilon^2  \sum_{ j } \phi'  ( \U\e_j (t),j)\lambda_j[\U\e_j(t)-\bar \U\e_j(t)].
\end{multline*}
From simple calculations we deduce, from the expression above, that
\begin{multline*}
\gamma_t^{(\varepsilon)}  = \varepsilon^4 \sum_j  \sum_{ i \neq j } \varphi(\U^{(\varepsilon)}_j (t),j)\phi'(\U\e_i(t),i)a(j,i) + \varepsilon^2 \sum_j \varphi( \U\e_j (t),j) \phi ( 0,j)\\
-\varepsilon^2 \sum_j \varphi( \U\e_j (t),j)\phi ( \U\e_j (t),j)
-\alpha\varepsilon^2  \sum_{ j } \phi'  ( \U\e_j (t),j)\U\e_j (t)\\
 -\varepsilon^2  \sum_{ j } \phi'  ( \U\e_j (t),j)\lambda_j[\U\e_j(t)-\bar \U\e_j(t)] + O (\varepsilon^2), \hspace{2cm}
\end{multline*}
with
\[
O (\varepsilon^2) :=  \varepsilon^2 \sum_j \sum_{ i \neq j } \varphi(\U\e_j (t),j) \left[ \phi \left( \U\e_i (t) + \varepsilon^2a(j,i),i \right) - \phi ( \U\e_i (t),i) - \varepsilon^2 a(j,i) \phi'(\U\e_i (t),i)\right] .
\]
Now, Assumption \ref{ass:3} implies that $\varphi$ is bounded and since  $\phi $, $\phi\p$, $\phi''$, $a$, $\lambda$ are also bounded, we have that 
 there is a  positive constant $c$ so that
\[
|\gamma_t^{(\varepsilon)}| \le c \left(1+ \varepsilon^2\sum_j \U\e_j(t)+ \varepsilon^2\sum_j \bar{\U}\e_j(t) \right)
\le c \Big( 1+ 2\sup_{t\leq T} ||\U\e(t)||\Big) .
\]
By Assumption \ref{ass:2} and Proposition \ref{thm:1bis}, it follows that for a positive constant $c$ not depending on $\varepsilon,$ 
$ \sup_{t \le T } \E \Big[ \gamma_t^{(\varepsilon)} \Big]^2  \le c.$

We now turn to the proof of \eqref{conditions} for $\sigma_t^{(\varepsilon)} $. For that sake,  we write
$\mathcal{L}= L_{\rm fire}+L_{(\alpha+\lambda)}$, where $L_{\rm fire}\phi$ and $L_{(\alpha+\lambda)}\phi$ are
given respectively by the first and second terms on the right hand side of
\eqref{geradorU}. Notice that $L_{(\alpha+\lambda)}$ acts as a ``derivative'', so that we have
\[
L_{(\alpha+\lambda)}
 [ \mu\e_t(\phi)]^2 - 2   \mu\e_t(\phi) L_{(\alpha+\lambda)} [ \mu\e_t(\phi)] = 0.
\]
The equality above is directly verified. Thus, it follows that 
$$\sigma_t^{(\varepsilon)}=L_{\rm fire}[ \mu\e_t(\phi)]^2-2 \mu\e_t(\phi) L_{\rm fire} [ \mu\e_t(\phi)].$$
Since $|2 \mu\e_t(\phi)|\leq c$ and we have already proven the bound for $L_{\rm fire}[ \mu\e_t(\phi)]$, it remains only to bound uniformly in $t\leq T$ and in $\ep$, the $L^2$-norm of $L_{\rm fire}[\mu\e_t(\phi)]^2$. By definition, 
\begin{multline*}
L_{\rm fire}[ \mu\e_t(\phi)]^2= \varepsilon^4\sum_j\sum_{i,k\neq j}\varphi(\U\e_j(t),j)\Bigg[ \phi(\U\e_i(t)+\varepsilon^2a(j,i),i) \phi(\U\e_k(t)+\varepsilon^2a(j,k),k)  \\
-\phi(\U\e_i(t),i)\phi(\U\e_j(t),j)\Bigg] 
+\varepsilon^4\sum_j \varphi(\U\e_j(t),j)[\phi^2(0,j)-\phi^2(\U\e_j(t),j)] \\
+ 2\varepsilon^4 \sum_j \sum_{i\neq j}\varphi(\U\e_j(t),j)[\phi(0,j)-\phi(\U\e_j(t),j)] [\phi(\U\e_i(t)+\varepsilon^2a(j,i),i)- \phi(\U\e_i(t),i)].
\end{multline*}
Using the same type of arguments above we can show that the $L^2$-norm of this term is bounded uniformly in $t\in [0,T]$ and in $\ep$, concluding the proof. A careful analysis in the  signs of the terms above shows that in fact $\sigma_t^{(\varepsilon)}  \to 0 $ as $ \varepsilon \to 0.$
\end{proof}

\section{The Auxiliary Process and the Coupling Algorithm}
\label{sec:5}

In this section we shall define an auxiliary process which we later shall prove that it is close to the true process as $\ep\to 0$. This uniform closeness in the limit $\ep\to 0$ is the content of the Theorem \ref{thm:4}. The proof of this result is based on a coupling algorithm designed so that neurons in both processes spike together as
often as possible. 
In the section \ref{sec:6}, we analyse the hydrodynamic limit for the auxiliary process and in section \ref{sec:7} we provide the proofs of Theorem \ref{thm:2} and \ref{thm:3}.

Throughout the section $\ep$ is kept fixed so that we omit the superscript $\ep$ from $\U\e(t)$ and all variables involving in the definition of the auxiliary process. 
Before defining the auxiliary process we shall introduce three partitions. 
\begin{defi}[{\it Partition on space}]
Let $\ell>0$ be a fixed parameter such that $\ell^{-1}$ is an integer number. We then partition the set $[0,1)^2$ into half-open squares of side length $\ell:$
\begin{equation*}
\mathcal{C}_{\ell}=\big\{C_{(m_1,m_2)}: m_1,m_2\in  \ell\Z^2\cap [0,1)^2 \big\}, \ C_{(m_1,m_2)}=[m_1,m_1+l)\times [m_2,m_2+l).
\end{equation*}
Since we shall not use the form chosen for the elements of $\mathcal{C}_{\ell}$, we take any enumeration of the set $\ell\Z^2\cap [0,1)^2$ and assume that $$\mathcal{C}_{\ell}=\{C_{m}: m=1\ldots, \ell^{-2}\}.$$  
For each square $C_m$ we denote by $i_m$ its center.
\end{defi}

\begin{defi}[{\it Partition on time}]
Let $\delta$ and $\tau$ be positive numbers such that $\delta$ is divisible by $\tau$.   We partition the interval $[0,\delta)$ into intervals of length $\tau$:
\begin{equation*}
\mathcal{J}_{\tau}=\{J_h: h=1,\ldots \delta\tau^{-1}\}, \ J_h=\big[\delta-h\tau,\delta-(h-1)\tau\big). 
\end{equation*}
\end{defi}

Let us explain the role of the partitions $\mathcal{C}_{\ell}$ and $\mathcal{J}_{\tau}$ in the definition of the auxiliary process. The auxiliary process is denoted by $Y\Prm(n\delta)$ (the parameter $E$ will appear below) and is defined at discrete times $n\delta$, $n\in\N.$ Its definition is such that neurons in the square $C_m$, having potential $\U\geq 0$, spike with a constant rate $\varphi(\U,i_m)$ in the time interval $[n\delta,(n+1)\delta)$. Thus, neurons in same the square spike according with the same spiking rate $u\mapsto\varphi(u,i_m).$ Moreover, in the same interval, all
firing events after the first one are suppressed. 
\begin{figure}[h]
\begin{center}

\begin{tikzpicture}

\draw[step=0.4cm,color=gray!70, very thin] (0,0) grid (4.79,4.79);

\draw[step=1.6cm, line width=1.3pt] (0,0) grid (4.79,4.79);

\draw[dashed,line width=1.3pt] (0,4.8) -- (4.8,4.8);
\draw[dashed,line width=1.3pt] (4.8,0) -- (4.8,4.8);

\draw (0, -0.25) node {\textbf{(0,0)}};
\draw (0, 5.05) node {\textbf{(0,1)}};
\draw (5.05, -0.25) node {\textbf{(1,0)}};
\draw (5.05, 5.05) node {\textbf{(1,1)}};

\draw[snake=brace,line width=1pt]  (3.2,-0.1) -- (1.6,-0.1);
\draw (2.4,-0.5) node {\Large $\ell=4\ep$};

\draw[snake=brace,line width=1pt]  (-0.1,2.4) -- (-0.1,2.8);
\draw (-0.4,2.6) node {\Large $\ep$};

\filldraw[color=red!80] (0.8,0.8) circle (2pt);

\filldraw[color=red!80] (0.8,2.4) circle (2pt);

\filldraw[color=red!80] (0.8,4) circle (2pt);

\filldraw[color=red!80] (2.4,0.8) circle (2pt);

\filldraw[color=red!80] (2.4,2.4) circle (2pt);

\filldraw[color=red!80] (2.4,4) circle (2pt);

\filldraw[color=red!80] (4,2.4) circle (2pt);

\draw (2.4,5.3) node {\Large $\Lamepi$};

\filldraw[color=red!80] (4,4) circle (2pt);
\filldraw[color=red!80] (4,0.8) circle (2pt);

\foreach \x in {0, 0.4, 1.2, 1.6, 2, 2.8, 3.2, 3.6, 4.4}
\foreach \y in {0, 0.4 ,0.8 ,1.2 ,1.6 ,2.0 , 2.4 , 2.8,3.2, 3.6, 4, 4.4}
{
\filldraw (\x,\y) circle (1.2pt);
}

\foreach \x in {0.8,2.4,4}
\foreach \y in {0, 0.4, 1.2 ,1.6 ,2.0 , 2.8,3.2, 3.6, 4.4}
{
\filldraw (\x,\y) circle (1.2pt);
}

\draw[->] (4.05,2.4) -- (5.5,3.35);
\draw (5.8, 3.4) node {\Large $i_m$};
\end{tikzpicture}
\label{fig2}

\end{center}
\caption{The red dots represent the centers of each half-open square $C_m$ with length $\ell$.}
\end{figure} 

The configuration of $Y\Prm$ is updated at every time interval $\big[n\delta,(n+1)\delta\big)$. Neurons in a common square have the same updating rule, so that we need to specify it in each square for a single neuron. For that sake, denote by $\bar{Y}\Prm_{i_m}(n\delta)$ the average potential of neuron $i_m$ in the auxiliary process at time $n\delta$ and take $i\in C_m$. 
Conditionally on $\bar{Y}\Prm_{i_m}(n\delta)=\bar{y}(i_m)$, suppose first that $i$ have not spiked during the interval $\big[n\delta,(n+1)\delta\big).$ Then the value of its membrane potential at time $(n+1)\delta$ is obtained by first letting the value of its current potential evolve, for a time $\delta$, under the attraction of $\bar{y}(i_m)$ and then taking into account the effect of the spikes in the interval $[0,\delta)$. If, on the other hand, $i$ have spiked  in the interval $J_h$, its potential is updated by first setting its current potential to $0$ and then applying the earlier updating rule during the interval $\big[\delta-(h-1)\tau ,\delta\big).$ 
This means that the potential of $i$ is then attracted for a time $(h-1)\tau$ by $\bar{y}(i_m)$ and next the effect of the spikes during $\big[\delta -(h-1)\tau ,\delta\big)$ is taken into account.
Before giving the precise definition of the auxiliary process, we need to introduce a third partition.
\begin{defi}[{\it Partition on the membrane potential at  time 0}]
Let $E$ be a positive real number which divides $R_0$. We then partition the interval $[0,R_0]$ into subintervals
\begin{equation}
\cI_{E}=\{I_{k}: k=1,\ldots, R_0E^{-1}\}, \ \ I_{k}= \big[(k-1)E,kE\big).
\end{equation} 
For each $I_k$ we denote its center by $E_{k}$.
\end{defi} 
For each neuron $i\in\Lamepi$, the value $Y\Prm_i(0)$ will be defined by first picking a point in $[0,R_0]$ according to the probability density $\psi_0(u,i)du$ and then redefining it as $E_k$ if the chosen value belongs to $I_k.$  The precise definition of the auxiliary process is given now.  

The definition of the process is done by induction. Initially, we consider the map $[0,R_0]\ni u\mapsto \Phi_0(u)$ which assigns $\Phi_0(u)=E_{k}$ if $u\in I_{k}$ and we then put 
\begin{equation}
\label{Yat0}
Y\Prm_0(i)=\Phi_0(\U_i(0)), \ \mbox{for each} \ i \in \Lamepi.
\end{equation} 
Now suppose that the configuration  $Y\Prm(n\delta)=y=(y_i,i\in\Lamepi)$ is given
and consider the sequence of independent exponential random variables $(\xi_i)_{i\in\Lamepi}$ which are independent of anything else, whose rates are $\varphi(y_{i},i_m)$ when $i\in C_{m}$. Notice that we keep constant the spiking intensity of the neurons. We write $N(m,h)$ to denote the number of neurons in $C_m$ spiking in the interval $J_{h}\in\mathcal{J}_{\tau}$,
\begin{equation}
\label{numberspikes}
N(m,h) =\sum\limits_{i\in C_m}\one_{\{\xi_i\in J_{h}\}}, \ J_h=\big[\delta-h\tau,\delta-(h-1)\tau\big),  
\end{equation}
while the contribution, due to spikes of other neurons, to the membrane potential of those neurons in $C_m$ which spike in $J_{h}$ is given by
\begin{equation}
S(m,h)= \varepsilon^2\sum\limits_{m'=1}^{\ell^{-2}}\sum\limits_{s=1}^{h-1}a(i_{m'},i_{m})N(m',s), \ h=2,\ldots, \delta\tau^{-1},
\end{equation}
and for $h=1$, we set $S(m,1)=0.$ 
%
Neurons which do not spike in $[n\delta,(n+1)\delta)$ will have their membrane potentials increased by
\begin{equation}
\label{energy0d}
S(m,\delta)= \varepsilon^2\sum\limits_{m'=1}^{\ell^{-2}}\sum\limits_{h=1}^{\delta\tau^{-1}}a(i_{m'},i_{m})N(m',h). 
\end{equation}
The average potential of neuron $i_m$ (at time $n\delta$) is defined by
\begin{equation}
\bar{y}(i_m)=\ep^2\sum_{m'=1}^{\ell^{-2}}\sum_{i\in C_{m'}}b(i_{m},i_{m'})y_i.
\end{equation}
Notice that the electrical synaptic strength is constant in each square $C_{m'}.$ 
Setting for simplicity $\bar{y}(i_m)=\bar{y}(m) $ and $\la_m=\la_{i_m}$, we write,
\begin{equation}
\label{flowy}
\Phi_{t,\bar{y}(m)}(y_i)=e^{-t(\al+\la_m)}y_i+\frac{\la_m}{\al+\la_m}\big(1-e^{-t(\al+\la_m)}\big)\bar{y}(m), \ 0\leq t\leq \delta, \ i\in C_m, \  
\end{equation}
for deterministic flow attracting the value $y_i$ to $\bar{y}_{i_m}$, and set 
\begin{equation}
\label{Ydonotspike}
Y\Prm_{i}((n+1)\delta)=\Phi_{\delta,\bar{y}(m)}(y_i)+S\big(m,\delta), \ i\in C_m, \  \mbox{if} \ \xi_i>\delta.
\end{equation}
Hence neurons which did not spike follow the deterministic flow for a time $\delta$. Afterwards, we add to their membrane potentials the value $S(m,\delta)$, generated by the spiking of other neurons, only at the end of the interval $[n\delta,(n+1)\delta).$  

For those neurons which spike in the interval $J_{h}$, we set
\begin{equation}
\label{Yspike}
Y\Prm_{i}((n+1)\delta)=
\Phi_{(h-1)\tau,\bar{y}(m)}(0)+S(m,h), \ i\in C_m, \ \mbox{if} \ \xi_i \in J_{h}.
\end{equation}
This is the value of the membrane potential of a neuron initially having potential $0$, following the deterministic flow for the remaining time $(h-1)\tau$ and receiving an additional potential $S(m,h)$, due to spikes of other neurons in the time interval $\big[\delta-(h-1)\tau,\delta\big).$

\begin{obs}
Notice that all variables $N(m,h)$, $S(m,h)$, $S(m,\delta)$ and $\bar{y}(m)$ depend on also on $n.$ We shall stress this dependency in the analysis of the hydrodynamic limits for $Y\Prm$, section \ref{sec:6}.
\end{obs}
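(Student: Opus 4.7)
The final statement is a notational remark rather than a theorem requiring genuine proof, so the ``proof'' amounts to tracing through the inductive construction of $Y\Prm$ given in \eqref{Yat0}--\eqref{Yspike} and confirming that each of the listed variables is built from the current-step configuration $y=Y\Prm(n\delta)$. The plan is simply to record this dependence and to indicate the notation that will be used when it has to be made explicit.

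First, for $\bar{y}(m)$: the definition $\bar{y}(i_m)=\ep^2\sum_{m'}\sum_{i\in C_{m'}}b(i_m,i_{m'})y_i$ takes as input the coordinates $y_i=Y\Prm_i(n\delta)$, hence $\bar{y}(m)$ is a deterministic function of $Y\Prm(n\delta)$ and so varies with $n$. Next, for $N(m,h)$: the exponentials $\xi_i$ used in \eqref{numberspikes} are independent of the past at step $n$ but have rates $\varphi(y_i,i_m)$ depending on $y=Y\Prm(n\delta)$, and moreover a fresh family $(\xi_i)_{i\in\Lamepi}$ is drawn at every update; both the law and the realization of $N(m,h)$ therefore depend on $n$. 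Finally, $S(m,h)$ and $S(m,\delta)$ are linear functionals of the $N(m',s)$, so they inherit the same $n$-dependence.

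Thus the clean way to rewrite these quantities is $\bar{y}^{(n)}(m)$, $N^{(n)}(m,h)$, $S^{(n)}(m,h)$, $S^{(n)}(m,\delta)$, and analogously an $n$-indexed family of exponentials $(\xi_i^{(n)})_{i\in\Lamepi}$ independent across $n$. With this convention, the inductive identities \eqref{Ydonotspike} and \eqref{Yspike} read, for $i\in C_m$,
\begin{equation*}
Y\Prm_i((n+1)\delta)=
\begin{cases}
\Phi_{\delta,\bar{y}^{(n)}(m)}(Y\Prm_i(n\delta))+S^{(n)}(m,\delta), & \xi_i^{(n)}>\delta,\\[1mm]
\Phi_{(h-1)\tau,\bar{y}^{(n)}(m)}(0)+S^{(n)}(m,h), & \xi_i^{(n)}\in J_h.
\end{cases}
\end{equation*}
No further argument is required: the remark is justified by direct inspection of the construction, and the only substantive content is the reservation of the superscript $(n)$ for use in Section~\ref{sec:6}, where time-averaged and asymptotic quantities built from these variables have to be manipulated without ambiguity.
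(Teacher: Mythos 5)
Your reading is correct and coincides with the paper's treatment: the remark is purely definitional, justified by direct inspection of the inductive construction \eqref{Yat0}--\eqref{Yspike}, and the paper offers no proof but simply adopts the $n$-indexed notation ($N\e_{n}(m,h)$, $\bar{y}\e_{n}(m)$, $S\e_{n}(m,h)$, $S\e_{n}(m,\delta)$) at the start of Section \ref{sec:6}, exactly as you propose (modulo writing the index as a subscript rather than a superscript).
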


\begin{obs}
Even though the auxiliary process $Y\Prm$ is defined in such a way that $Y\Prm$ is close to the true process, we could have chosen the distribution of the
spiking neurons in the auxiliary process differently. The choice we have made is convenient, specially in the analysis of the hydrodynamic limit for $Y\Prm$.
\end{obs}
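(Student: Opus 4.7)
The remark makes two assertions that I would justify separately: first, that the auxiliary process $Y\Prm$ stays close to the true process $\U\e$ under the chosen specification, and second, that alternative specifications of the spiking distribution would yield similarly close processes, although the one adopted here is convenient for the subsequent hydrodynamic analysis. My plan is to treat these as complementary observations rather than as a formal theorem, since the remark is commentary on a modelling choice.

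For the closeness claim, the strategy is to anticipate the coupling used in Theorem \ref{thm:4}. The ingredients that I would verify are: (a) the cell-wide rate $\varphi(y_{i_m},i_m)$ deviates from the true rate $\varphi(\U_i(t),i)$ by $O(\ell+\delta+E+\tau)$ by Lipschitz continuity of $\varphi$ in both arguments combined with the bounded potential drift over a $\delta$-interval; (b) the exponential clocks $(\xi_i)$ can be coupled to the spiking times of $\U\e$ in $[n\delta,(n+1)\delta)$ so that, outside an event whose probability is $O(\delta^2\ep^{-2})$ per neuron, the first (and only) spike of each neuron in the interval coincides in the two processes; (c) discarding firings after the first one is harmless because $\delta\varphi^*\to 0$. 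Combined with the uniform bound \eqref{boundass3} from Proposition \ref{thm:1bis}, these local estimates aggregate in $D([0,T],\mathcal{S}')$ to give closeness after letting first $\ep\to 0$ and then $\tau,\delta,\ell,E\to 0$.

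For the convenience claim, the justification consists in exhibiting which features of the specification enter crucially in the hydrodynamic limit for $Y\Prm$ proved in Section \ref{sec:6}. The relevant features are: (i) the independence of the clocks $(\xi_i)_{i\in C_m}$ with a common intensity that depends only on the cell $C_m$, so that $N(m,h)$ is binomial with parameters to which the law of large numbers applies uniformly over $C_m$ as $\ep\to 0$; (ii) the one-spike-per-interval rule, which turns the contribution $S(m,h)$ into a simple linear functional of the variables $N(m',s)$; (iii) the time mesh $\mathcal{J}_\tau$ together with the flow $\Phi_{t,\bar y(m)}$ of \eqref{flowy}, which fits directly into the characteristic representation \eqref{flowChar} and thereby into the explicit solution formulas \eqref{solpde1}--\eqref{solpde2}. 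A natural alternative, for instance allowing multiple firings per neuron per $\delta$-interval with its own individual intensity, would still give closeness to $\U\e$ via an analogous coupling, but would force one to encode the jump contribution through a Poisson random measure, losing the closed-form description of $N(m,h)$ and the clean link with the characteristic flow.

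The main obstacle in making the remark literally rigorous is that \emph{convenience} is a comparative notion: it cannot be proved in isolation, only against a specific alternative. My proposal is therefore to read the remark as a guide to the subsequent sections, where each of (i)--(iii) is used explicitly (in the proof of Theorem \ref{thm:4} and in the hydrodynamic limit for $Y\Prm$); the remark simply flags which of these properties are modelling choices rather than essential features of the problem, so that a reader seeking to generalise the construction knows what may be altered without destroying the proofs.
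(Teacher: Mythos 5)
This statement is an unproved remark in the paper, and your reading is exactly the one the paper intends: the closeness claim is substantiated by the coupling of Subsection \ref{subsec5.1} and Theorem \ref{thm:4}, while the ``convenience'' claim is substantiated by the way the cell-constant rates, the one-spike-per-interval rule and the flow \eqref{flowy} are exploited in Section \ref{sec:6} and in the Hoeffding-based proof of Theorem \ref{thm:5}. Your proposal is correct and follows essentially the same (implicit) justification as the paper, so nothing further is needed.
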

 
\subsection{Coupling the Auxiliary and True Processes}
\label{subsec5.1}
In this section, we present a coupling algorithm for the two processes $(\U(n\delta))_{n\geq 1}$ and $(Y\Prm(n\delta))_{n\geq 1}$. The algorithm is designed so that neurons in both processes spike together as
often as possible.

At time $0$, it is set, for each $i \in \Lamepi,$ $Y\Prm_i(0)=\Phi_0(\U_i(0))$. Then, for $n\geq 0,$ the input of the algorithm is the configuration $(\U(n\delta),Y\Prm(n\delta))$ and its output is the new configuration $(\U((n+1)\delta),Y\Prm((n+1)\delta))$.
The following auxiliary variables are used in the algorithm.
\begin{itemize}
\item $(u,y)\in \R_+^{\Lamepi}\times \R_+^{\Lamepi}$ representing the configuration of membrane potentials in the two processes and $\bar{y}(m)=\ep^2\sum_{m'}\sum_{i\in C_{m'}}b(i_{m'},i_m)y_i$ representing the average membrane potential of the neuron $i_m$.
\item Independent random times $\xi^1_i, \xi^2_i, \xi_i \in (0,\infty)$, $i\in\Lamepi$, indicating possible times of updates.
\item $q=(q_i, i\in\Lamepi)\in \{0,1\}^{\Lamepi}$. The variable $q_i$ marks the possible spike of the neuron $i$ in the auxiliary process.
\item $\beta=(\beta_i,i\in\Lamepi) \in \{0, 1, \ldots\delta\tau^{-1} \}^{\Lamepi}$. The variable $\beta_i$ indicates in which subinterval of length $\tau$ the neuron $i$ has spiked in the auxiliary process. The condition $\beta_i=0$ means the neuron $i$ has not spiked.
\item $L\in[0,\delta]$ indicates the remaining time after each update of the system.
\end{itemize}
The deterministic flows follow by the processes $\U$ and $Y\Prm$ make part of the coupling algorithm. 
Recall that the deterministic flow of the process $Y_i\Prm$ is denoted by $\Phi_{t,\bar{y}(m)}(y_i)$, see equation \eqref{flowy}, while the deterministic flow of the $\U_i$ at time $t$ is $\Psi^i_{t,u}(u_i)=(e^{At}u)_i, $ see \eqref{solode} and formulas therein. 

The coupling algorithm can be described as follows. Conditionally on random vector $(\U(n\delta),Y\Prm(n\delta))=(u,y)$,  we attach to each neuron $i$ two independent random clocks $\xi^1_i$ and $\xi^2_i$. 
For $i\in C_m$, $\xi^1_i$ has intensity $\varphi(\Psi^i_{t,u}(u_i),i)\wedge\varphi(y_i,i_m)$, while $\xi^2_i$ intensity $|\varphi(\Psi^i_{t,u}(u_i),i)-\varphi(y_i,i_m)|$. Random clocks associated to different neurons are independent. 
If $\xi^1_i$ rings first, then the neuron $i$ spikes in both process and the coupling is successful. On the other hand, if $\xi^2_i$ rings first, then the neuron $i$ fires only 
in the process whose the membrane potential of $i$ at time ${\xi^2_i}_{-}$ is the largest.  Whenever the neuron $i$ fires in the interval $J_{h}$, in the auxiliary process,  we set $q_i=1$ and $\beta_i=h$ and disregard other spikes of $i$ in the auxiliary process. 
Thus, all others possible spikes of $i$ will be considered in the true process $\U_i$. For this reason we also consider a random clock $\xi_i$ with intensity $\varphi(\Psi^i_{t,u}(u_i),i)$ whose rings will indicate the next spikes of $i$ in the true process.
All the random clocks are considered only if they ring in the interval of time $[0,\delta)$.

The algorithm is provided now.  

\begin{algorithm}

\caption{Coupling algorithm}
\label{AlgCampoMedio} 
\begin{algorithmic}[1]
\State {\it Input:} $\Big(\U\e(n\delta),Y\Prm(n\delta)\Big)$
\State \textit{Output:}  $\Big(\U\e((n+1)\delta),Y\Prm((n+1)\delta)\Big)$
\State \textit{Initial values:} $(u,y)\gets \Big(\U\e(n\delta),Y\Prm(n\delta)\Big)$, $q_i\gets 0$ and $\beta_i\gets 0$, for all $i\in\Lamepi$, $L\gets \delta$ 
\WHILE{$L>0$}
	\State {For each $i \in \Lamepi$, choose independent random times 
	\begin{itemize}
	\item $\xi^1_i$ with intensity $\varphi(\Psi^i_{t,u}(u_i),i)\wedge \varphi(y_i,i_m)$ for all neurons in $C_m$
	\item $\xi^2_i$ with intensity $|\varphi(\Psi^i_{t,u}(u_i),i)-\varphi(y_i,i_m)|$ for all neurons in $C_m$
	\item $\xi_i$ with intensity $\varphi(\Psi^i_{t,u}(u_i))$
	\item $R=\inf\limits_{i\in\Lamepi;\,q_i=0}(\xi^1_i \wedge\xi^2_i)\wedge \inf\limits_{i\in\Lamepi;\,q_i=1}\xi_i$
	\end{itemize}
	}
	\IF{$R\ge L$}
	
	\textit{Stop situation:} 
	\State {$y_i\gets \Phi_{\delta,\bar{y}(m)}(y_i)+S(m,\delta)$, for all $i\in\Lamepi\cap C_m$ such that $q_i=0$}
	\State {$y_i\gets \Phi_{(\beta_i-1)\tau,\bar{y}(m)}(0)+S(m,\beta_i)$, for all $i\in\Lamepi\cap C_m$ such that $q_i=1$}
	\State {$u_i \gets \Psi^i_{L,u}(u_i)$, \ $L\gets 0$}
	\ELSIF {$R=\xi^1_i<L$}
		\State {$L\gets L-R$, \ $q_i\gets 1$, \ $\beta_i\gets \delta\tau^{-1}-\big(\big\lceil\frac{R}{\tau}\big\rceil-1\big)$ 
		\State $u_i\gets 0$,$u_j\gets \Psi^j_{R,u}(u_j)+\varepsilon^2 a(i,j)$ for all $j\ne i$}
	\ELSIF {$R=\xi^2_i<L$}
	\IF {$\varphi(\Psi^i_{R,u}(u_i),i)>\varphi(y_i,i_m)$}
		\State {$L\gets L - R$, \ $u_i\gets 0$, $u_j\gets \Psi^j_{R,u}(u)+\varepsilon^2 a(i,j)$ for all $j\ne i$} 
	\ENDIF
	\IF {$\varphi(\Psi^i_{R,u}(u_i),i)\le\varphi(y_i,i_m)$}
		\State {$L\gets L - R$, \ $q_i\gets 1$  , \ $\beta_i\gets \delta\tau^{-1}-\big(\big\lceil\frac{R}{\tau}\big\rceil-1\big),$ 
		\ $u_i\gets \Psi^i_{R,u}(u_i)$ for all $i\in\Lamepi$}
	\ENDIF
	\ELSIF {$R=\xi_i<L$}
		\State {$L\gets L - R$, \ $u_i\gets 0$, $u_j\gets \Psi^j_{R,u}(u_i)+\varepsilon^2 a(i,j)$ for all $j\ne i$}
	\ENDIF
\ENDWHILE
\State {$\big(\U\e((n+1)\delta),Y\Prm((n+1)\delta)\big)\gets (u,y)$}
\STATE \textbf{Return} $\big(\U\e((n+1)\delta),Y\Prm((n+1)\delta)\big)$
\end{algorithmic}
\end{algorithm}

\subsection{Consequences of the Coupling Algorithm}

The Theorem \ref{thm:4} is the main result of this section. It states that typically the difference of the potentials $\Delta_i(n)=|\U(n\delta)(i)-Y\Prm(n\delta)(i)|$ is small (proportionally to $\delta$). In addition, it claims that the proportion of neurons having large values of $\Delta_i(n)$ is also small (again proportional to $\delta$). 
\begin{defi}
\label{goodlabels}
A label $i \in \Lamepi$ is called ``good at time $k \delta$'' if for all $n=1 , \ldots , k$ the following is true:
\begin{itemize}
\item[(i)]
Either $\xi^1_i$ rings first and $\xi_i$ does not ring on interval $[(n-1)\delta,n\delta];$
\item[(ii)] or neither $\xi^1_i$ nor $\xi^2_i$ ring on the interval $[(n-1)\delta,n\delta].$
\end{itemize}
We denote by $\mathcal G_n$ the set of good labels at time $n\delta$ and $\mathcal B_n = \Lamepi\setminus \mathcal G_n$ the set of bad labels.
For $i \in \mathcal G_n$  we set $\Delta_i(n):= |\U(n\delta)(i)-Y\Prm(n\delta)(i)|$ so that the maximum distance between the membrane potential of the true and auxiliary process, for the good labels, is
$$ \theta_n= \max\{\Delta_i(k), \, i \in  \mathcal G_n\, , k \le n \}.$$
\end{defi}
We now enunciate the Theorem \ref{thm:4}. Its proof is postponed to the Appendix \ref{proof thm 4}.
\begin{thm}
\label{thm:4}
Grant  Assumption \ref{ass:3}, for any given $ T>0,$
there exist $ \delta_0 > 0 $ and a constant $C$ depending on $\| \varphi\|_\infty $
and on $ T$ such that for all $\delta \le \delta_0,$
             $$
\theta_n \le C \delta \quad \mbox{ and}
\quad \ep^2|\cB_n| \le C \delta \quad \mbox{ for all $n$ such that $ n \delta \le T ,$ }
            $$
with probability  $\geq 1 -c_1\delta^{-1}e^{ - c_2\epsilon^{-2} \delta^4 }.$ The constants $c_1$ and $c_2$ do not depend on $\ep$ and $\delta$.
\end{thm}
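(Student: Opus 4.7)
\medskip

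\noindent\textbf{Proof plan.} I would argue by induction on the discrete time index $n$, proving both bounds $\theta_n\le C\delta$ and $\ep^2|\cB_n|\le C\delta$ simultaneously inside a single high-probability event. The base case $n=0$ is immediate: by construction $Y\Prm_i(0)=\Phi_0(\U_i(0))$ differs from $\U_i(0)$ by at most $E$, and $\cB_0=\emptyset$. The inductive step has two parts: controlling the rate at which good labels become bad, and controlling how the discrepancy between $\U$ and $Y\Prm$ propagates along the good labels.

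\medskip

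\noindent\textbf{Controlling $\cB_n$.} Conditionally on $(\U(n\delta),Y\Prm(n\delta))$ and on $i\in\cG_{n-1}\cap C_m$, neuron $i$ leaves $\cG_{n-1}$ during $[n\delta,(n+1)\delta)$ only if either $\xi_i^2$ rings in that interval, or two Poisson events $\xi_i^1,\xi_i$ both ring. The first event has rate $|\varphi(\Psi^i_{t,u}(u_i),i)-\varphi(y_i,i_m)|$, which by Assumption \ref{ass:3} and the Lipschitz regularity of $\varphi$ in both variables is bounded by $K(\theta_{n-1}+\ell+\ep^2\|a\|_\infty\cdot(\text{spikes in the interval}))$, plus a term controlled by the bad-label spikes $\ep^2|\cB_{n-1}|\|a\|_\infty$. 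Plugging in the inductive hypothesis $\theta_{n-1}+\ep^2|\cB_{n-1}|\le 2C\delta$, the per-step probability that $i$ becomes bad is $O(\delta^2+\delta\ell)$. Summing over neurons in $\Lamepi$ and using conditional independence of the clocks across neurons, a Bernstein-type deviation inequality for binomial-like sums yields $\ep^2|\cB_n\setminus\cB_{n-1}|\le C'(\delta^2+\delta\ell)$ with probability at least $1-c_1 e^{-c_2\ep^{-2}\delta^4}$ (deviation of order $\delta^2$ around a mean of order $\delta^2$, over $\ep^{-2}$ independent trials). Iterating over the $T/\delta$ time steps and adjusting constants produces $\ep^2|\cB_n|\le C\delta$.

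\medskip

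\noindent\textbf{Controlling $\theta_n$.} Fix a good label $i\in\cG_n\cap C_m$. Over $[n\delta,(n+1)\delta)$, the true potential $\U_i$ evolves by the linear ODE \eqref{detrmotion3} driven by the genuine local average $\bU_i(t)$, plus jumps $\ep^2 a(j,i)$ at each spike time of $j$. The auxiliary potential $Y\Prm_i$ evolves by the same ODE with $\bar y(m)$ replacing $\bU_i$ and with the spike contributions $S(m,h)$ discretized on the $\tau$-grid and collapsed onto the same center $i_m$. I would control the discrepancy by a Gronwall-type argument over the $\delta$ interval, estimating term-by-term: (i) $|\bU_i(t)-\bar y(m)|\le C(\theta_{n-1}+\ell+\ep^2|\cB_{n-1}|)$ using the Lipschitz regularity of $b$, the fact that $b(i,\cdot)$ is close to $b(i_m,\cdot)$ up to $O(\ell)$, and the inductive control on $\theta_{n-1}$; (ii) the $\tau$-time-discretization of the spike impulses contributes $O(\tau)$ per spike, hence $O(\tau)$ overall since the number of spikes per interval is $O(\delta\ep^{-2})$ but each is weighted by $\ep^2$; (iii) the excess spikes produced by bad labels contribute $O(\ep^2|\cB_n|\|a\|_\infty)\le O(\delta)$. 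Altogether, the per-step increment to $\theta$ is $O(\delta(\theta_{n-1}+\ell+\tau+E)+\delta^2)$, and iterating over $T/\delta$ steps keeps $\theta_n\le C\delta$ provided $\ell,\tau,E$ are chosen small enough relative to $\delta$.

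\medskip

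\noindent\textbf{Probability accounting and main obstacle.} The exponential concentration at each step is Bernstein applied to the sum of $\ep^{-2}$ conditionally independent indicators with rare success probability; the deviation needed is $O(\delta^2)$ (absolute), giving the $e^{-c_2\ep^{-2}\delta^4}$ per-step bound, and the union bound over the $\lceil T/\delta\rceil$ time steps produces the prefactor $\delta^{-1}$. The most delicate point, and where I expect the bulk of the effort to sit, is in step (i) above: closing the induction requires showing that the estimate on $|\bU_i(t)-\bar y(m)|$ does not pick up an extra factor of $\delta^{-1}$ from the spike contributions. This needs a careful bookkeeping of how the spike times inside $[n\delta,(n+1)\delta)$ contribute to $\bU_i(t)$ (continuously) versus to $\bar y(m)$ (which only sees the value at time $n\delta$), and the argument must exploit the $\tau$-refinement of the auxiliary process together with the Lipschitz bounds on $b$ to avoid a loss. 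The matching concentration bound for the spike counts $N(m,h)$ (so that empirical counts are close to the expected Poisson means on each small interval) enters here as well, and drives the precise exponent $\delta^4$ in the failure probability.
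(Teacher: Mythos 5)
Your proposal follows the same two-pronged induction as the paper: control the bad-label set $\cB_n$ and the discrepancy $\theta_n$ simultaneously, with per-step Hoeffding/Bernstein concentration over the $\ep^{-2}$ conditionally independent clocks producing the factor $e^{-c\ep^{-2}\delta^4}$ (deviations of absolute size $\delta^2$ around a mean of order $\delta^2$), and a union bound over the $T/\delta$ time steps producing the $\delta^{-1}$ prefactor. The recursion $\theta_n\le(1+C\delta)\theta_{n-1}+C\delta^2$ combined with $\ep^2|\cB_n|\le C(\theta_{n-1}+\delta)$ closes exactly as in the paper by summing the geometric series $(1+C\delta)^{T/\delta}\le e^{CT}$, and your handling of the constraint $\ell,\tau,E\lesssim\delta$ is also in line.

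The one genuine gap is in the term you flag as delicate but do not resolve, and your diagnosis of its nature is slightly off-target. The bottleneck in the paper's proof of the $\theta_n$ recursion is not the concentration of the spike counts $N(m,h)$ around their Poisson means (that estimate belongs to Theorem 5, not Theorem 4). Rather, it is the direct comparison of the \emph{coupled} spike counts in the two processes: one must show, with probability $\ge 1-e^{-C\ep^{-2}\delta^2\ell^2}$, that in each square $C_{m'}$ the difference $\big|N_{C_{m'}}\big([(k-1)\delta,k\delta)\big)-\tilde N_{C_{m'}}\big([(k-1)\delta,k\delta)\big)\big|$ is at most $4(\varphi^*\delta)^2\ep^{-2}\ell^2$. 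This discrepancy does not come from fluctuations; it comes from the fact that a good label $i$ may spike more than once in the true process while the auxiliary process records only the first spike. The paper handles it by conditioning on the set $A^1_k$ of labels whose clocks ring at least twice, bounding its size via Hoeffding, and then dominating $\sum_{j\in A^1_k\cap C_{m'}}(N_j^*-2)$ by a sum of genuine Poisson variables via the elementary comparison $P^*[N^*_j-2\ge k]\le P[X_j\ge k]$ for $\delta$ small, where $P^*$ is the law of a Poisson($\varphi^*\delta$) conditioned to be $\ge2$. Without a bound of this kind, the spike-contribution term in your step (iii) cannot be reduced to $O(\delta^2)$, and the recursion does not close. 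Your plan is structurally sound and aims at the right place; it is this specific stochastic-domination lemma that is missing.
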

For any test function $\phi\in \mathcal{S}$ and $n\geq 1$, we write $$\nu\Prm_{n\delta}(\phi)=\ep^2\sum_{i\in\Lamepi}\phi(Y\Prm_{i}(n\delta),i_m).$$  As a by product of Theorem \ref{thm:4}, we obtain an upper bound for the $L_1$- distance between the variables $\mu_{n\delta}(\phi)$ and $\nu\Prm_{n\delta}(\phi)$, for each test function $\phi\in \mathcal{S}.$ This result will be used, in section \ref{sec:7}, in the analysis of the Hydrodynamic for $\U$. Let
\begin{equation}
\label{2.13}
\mathcal T = \Big\{ t \in [0,T]: t = n2^{-q}T, n,q \in \mathbb N\Big\} .
\end{equation}
Remember that $P^{(\ep)}_{u}$ denotes the law under which the true process $U\e(t)$ satisfies the condition $\U(0) = u.$ We write $\tilde{P}^{(\ep)}_u$ to denote the law under which the process $Y\Prm ( \cdot)$ satisfies $\Phi_0(u)=(\Phi_0(u_i),i\in \Lamepi)$ and write $Q^{(\ep)}_{u}$ to denote the joint law of the true and auxiliary processes induced by coupling algorithm provided above. We shall denote the associated expectations  by $E^{(\ep)}_u$  and $\tilde{E}^{(\ep)}_u$, and, by abuse of notation, the joint expectation by  $Q^{(\ep)}_u.$

\begin{prop}
\label{prop3}
Take $t \in \mathcal T$, $\delta\in \{2^{-q} T, q \in \mathbb N\}$ and let $n$ be such that $t=\delta n$  and fix $\phi\in \mathcal{S}$. 
Then, there exists a constant $C$, not depending on $\delta$, such that
\begin{equation}
\label{eq:445'}
Q_u^{(\ep)}\left[\big|\mu_{t}(\phi)-\nu\Prm_{t}(\phi)\big|\right]
\le C||\varphi||_{\rm Lip}\Big(\frac{e^{ - C\epsilon^{-2} \delta^4 }}{\delta} +\delta\Big).
\end{equation}
\end{prop}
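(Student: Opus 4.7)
The plan is to reduce Proposition~\ref{prop3} to Theorem~\ref{thm:4} by decomposing the difference according to the good/bad labels of Definition~\ref{goodlabels}, then controlling good labels with a smoothness bound on $\phi$, bad labels with a crude $L^\infty$ bound, and finally handling the complement of the high-probability event of Theorem~\ref{thm:4} via its exponentially small probability.

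First, letting $m(i)$ denote the index of the square of $\mathcal{C}_\ell$ containing $i$, I would write
\begin{equation*}
\mu_t(\phi)-\nu\Prm_t(\phi) = \ep^2\!\!\sum_{i\in\mathcal{G}_n}\!\bigl[\phi(\U_i(t),i)-\phi(Y\Prm_i(t),i_{m(i)})\bigr] + \ep^2\!\!\sum_{i\in\mathcal{B}_n}\!\bigl[\phi(\U_i(t),i)-\phi(Y\Prm_i(t),i_{m(i)})\bigr].
\end{equation*}
Let $\Omega_n$ be the event on which both conclusions of Theorem~\ref{thm:4} hold, so that $Q_u^{(\ep)}(\Omega_n^c)\le c_1\delta^{-1}e^{-c_2\ep^{-2}\delta^4}$ for $\delta\le\delta_0$.

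On $\Omega_n$, for $i\in\mathcal{G}_n$ I would use joint Lipschitz continuity of $\phi$ together with $|\U_i(t)-Y\Prm_i(t)|\le \theta_n\le C\delta$ and $|i-i_{m(i)}|\le\ell\sqrt 2$ to bound the first sum by $\|\phi\|_{\mathrm{Lip}}(C\delta+\ell\sqrt 2)$, using $\ep^2|\mathcal{G}_n|\le 1$. For $i\in\mathcal{B}_n$, boundedness of $\phi$ combined with $\ep^2|\mathcal{B}_n|\le C\delta$ yields a contribution of at most $2\|\phi\|_\infty C\delta$. On the complement $\Omega_n^c$, the trivial estimate $|\mu_t(\phi)-\nu\Prm_t(\phi)|\le 2\|\phi\|_\infty$ combined with the probability bound contributes $2\|\phi\|_\infty c_1\delta^{-1}e^{-c_2\ep^{-2}\delta^4}$. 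Taking expectations, summing the three contributions, and absorbing $\|\phi\|_{\mathrm{Lip}}$, $\|\phi\|_\infty$, $\ell$, and $c_1$ into a single constant $C$ (permissible since $\ell,E,\tau$ are fixed at this stage) produces the stated bound. The factor $\|\varphi\|_{\mathrm{Lip}}$ in the final estimate enters through the constants of Theorem~\ref{thm:4}, which trace back to comparisons of intensities $\varphi(\cdot,i)$ and $\varphi(\cdot,i_m)$ in the coupling algorithm of Section~\ref{subsec5.1}.

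The main obstacle is entirely encapsulated in Theorem~\ref{thm:4} itself: given the simultaneous controls $\theta_n\le C\delta$ and $\ep^2|\mathcal{B}_n|\le C\delta$ on an event of probability at least $1-c_1\delta^{-1}e^{-c_2\ep^{-2}\delta^4}$, Proposition~\ref{prop3} is a direct consequence of the Lipschitz–plus–$L^\infty$ decomposition just described; the only bookkeeping task is tracking how the constants depend on $\|\varphi\|_{\mathrm{Lip}}$, on $\phi$, and on the fixed discretization parameters $\ell,E,\tau$.
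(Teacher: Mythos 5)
Your proof is correct and follows essentially the same strategy as the paper's: apply Theorem~\ref{thm:4}, split the sum over $\Lamepi$ into good and bad labels, use Lipschitz continuity of $\phi$ on the good labels and a crude $L^\infty$ bound on bad labels plus the exponentially small exceptional event. The only minor differences are cosmetic — the paper first truncates $\U\e$ and $Y\Prm$ at $B^*$ so that $|U^*-Y^*|\le B^*$ everywhere (absorbing the unlikely unbounded event that way), whereas you use $\|\phi\|_\infty$ directly; and where you write that $\ell$ is ``absorbed into $C$,'' it is cleaner to note, as in the proof of Theorem~\ref{thm:4}, that one works under $\ell\le\delta$, so the $\ell\sqrt 2$ term from $|i-i_{m(i)}|$ is controlled by $\delta$ rather than by a constant that might depend on the discretization.
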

The proof is given in Appendix \ref{proof prop3}. Next, in section \ref{sec:6}, we study the hydrodynamic limit for the approximating process and, in section \ref{sec:7}, we conclude the proof of Theorems \ref{thm:2} and \ref{thm:3}.

\section{Hydrodynamic Limit for the Auxiliary Process}
\label{sec:6}

In this section, we initially describe the random evolution of the membrane potentials in the auxiliary process. Next, we define a deterministic version of this evolution taking into account the average behaviour of the auxiliary process in each time interval $\big[n\delta,(n+1)\delta\big).$ 
Beside, we also consider the random variables which compute the number of neurons of the auxiliary process in a given square with a given potential and, from the dynamics of these variables, we define a second deterministic evolution. 
The main theorem of this section, Theorem \ref{thm:5}, states that both the random potentials and the counting variables becomes deterministic as $\ep\to 0$ and they are described respectively by the first and second deterministic evolutions.

In the remaining of the section these deterministic evolutions will be used to define the hydrodynamic evolution for the auxiliary processes.
When necessary
we shall stress the dependence both on $\ep$ and $n$ writing $Y^{(\ep,\delta,\ell,E,\tau)}$, $N\e_{n}(m,h)$, $\bar{y}\e_{n}(m)$, $S\e_{n}(m,h)$ and $S\e_{n}\big(m,\delta).$  
\subsection{Hydrodynamic Evolution of the Auxiliary Process}

Throughout the subsection the parameters $\delta,\ell,E,\tau$ are kept fixed, so that we omit the superscript in all variables considered below. In what follows we work in $C_m$ and doing so we drop also the dependency on $m$ unless some confusion may arise. 

We denote by $\mathcal{E}\e_n$ the random set of potentials which the auxiliary process (restricted to $C_m$) assume at time $n\delta$. By \eqref{Yat0}, we have $ \mathcal{E}\e_0=\{E\e_{0,k}:k=1\ldots, R_0E^{-1}\}$ where we set $E\e_{0,k}=E_k.$ 
At time $\delta$, 
the potential of neurons which spike in the $J_h=\big[\delta-h\tau,\delta-(h-1)\tau\big)$, independently of their initial membrane potentials, will be a value $E\e_{1,h}\in\mathcal{E}\e_1.$ By \eqref{Yspike}, we immediately see that
\begin{equation}
\label{type2}
E\e_{1,h}=\Phi_{(h-1)\tau,\bar{y}\e_{0}(m)}(0)+S\e_{1}(m,h),\ h=1,\ldots, \delta\tau^{-1}.
\end{equation}
On the other hand, at time $\delta$, the membrane potential of those neurons which initially had potential $E\e_{0,k}$ and do  not spike will be a value $E\e_{1,k+\delta\tau^{-1}}\in\mathcal{E}\e_1.$ Recalling \eqref{Ydonotspike}, it is readily verified that
\begin{equation}
\label{type1}
E^{(\ep)}_{1,k+\delta\tau^{-1}}=\Phi_{\delta,\bar{y}\e_{0}(m)}\Big(E\e_{0,k}\Big)+S\e_{1}(m,\delta), \ k=1,\ldots, |\mathcal{E}\e_0 | 
\end{equation}
where $E\e_{0,k}\in\mathcal{E}_0$.
Thus, we may split the elements of the finite set $\mathcal{E}\e_1$ into two groups. The first group consists of those potentials satisfying  \eqref{type1}, reached for only by neurons which do not spike in $[0,\delta)$. On the other hand, due to spikes of neurons in the interval  $[0,\delta)$ some potentials are ``created'' at time $\delta$. This leads to the second group of potentials, those satisfying  \eqref{type2}. Moreover, the following chain of inequalities holds 
$$0=E\e_{1,1}<\ldots <E\e_{1,\delta\tau^{-1}}<E\e_{1,1+\delta\tau^{-1}}< \ldots <E\e_{1,R_0E^{-1}+\delta\tau^{-1}}.$$
Iterating the argument above, for each $n\delta\leq T$, we may also split the elements of the set $\mathcal{E}\e_{n}$ into two groups. Those potentials belonging to the first group satisfy  
\begin{equation}
E\e_{n,k+\delta\tau^{-1}}=\Phi_{\delta,\bar{y}\e_{n-1}(m)}\Big(E\e_{n-1,k}\Big)+S\e_{n}(m,\delta), \ k=1,\ldots,  |\mathcal{E}\e_{n-1}|,
\end{equation}
where $E\e_{n-1,k}\in \mathcal{E}\e_{n-1}$,
while the  potentials of the second group satisfy
\begin{equation}
E\e_{n,h}=\Phi_{(h-1)\tau,\bar{y}\e_{n-1}(m)}(0)+S\e_{n}(m,h), \ h=1,\ldots, \delta\tau^{-1}.
\end{equation}
From our definitions, we have also that
\begin{equation*}
0=E\e_{n,1}< \ldots <E\e_{n,\delta\tau^{-1}}< E\e_{n,1+\delta\tau^{-1}}< \ldots <E\e_{n,R_0E^{-1}+n\delta\tau^{-1}}.
\end{equation*}
Now, writing 
\begin{equation}
\label{meanlocalpotential}
e\e_{n}(m)=\tilde{E}^{(\ep)}_{Y^{(\ep,\delta,\ell,E,\tau)}(0)}\big[\bar{Y}^{(\ep,\delta,\ell,E,\tau)}_{i_m}(n\delta)\big], \ 1\leq m \leq \ell^2, \ n\delta\leq T,
\end{equation}
to denote the expected value of the local average membrane potential $\bar{Y}^{(\ep,\delta,\ell,E,\tau)}_{i_m}$ at time $n\delta$, we set $\mathcal{D}\e_0=\mathcal{E}\e_0$ and then recursively define for $k=1,\ldots, |\mathcal{D}\e_{n-1}|, $
\begin{equation}
\label{energies1}
D\e_{n,k+\delta\tau^{-1}}:= \Phi_{\delta,e\e_{n-1}(m)}\big(D\e_{n-1,k}\big)+\tilde{E}_{Y^{(\ep,\delta,\ell,E,\tau)}(0)}\e\big[S\e_{n}(m,\delta)\big],\ \mbox{with} \  D\e_{n-1,k}\in \mathcal{D}\e_{n-1},
\end{equation}  
\begin{equation}
\label{energies2}
D\e_{n,h}:= \Phi_{(h-1)\tau,e\e_{n-1}(m)}(0)+\tilde{E}_{Y^{(\ep,\delta,\ell,E,\tau)}(0)}\e\Big[S\e_{n}(m,h)\Big],  \ h=1,\ldots, \delta\tau^{-1}.
\end{equation}
Given $E\e_{n,k}\in \mathcal{E}\e_n$, we write $\eta\e_{n}(m,k)$ to denote the number of neurons of $Y^{(\ep,\delta,\ell,E,\tau)}$, in $C_m$, with membrane potential $E\e_{n,k}$ at time $n\delta$. 
Finally, we write
$$\zeta\e_{0}(m,k)=\tilde{E}_{Y^{(\ep,\delta,\ell,E,\tau)}(0)}\e\Big[\eta\e_{0}(m,k)\Big],$$ 
to denote the expected number of neurons of the $Y^{(\ep,\delta,\ell,E,\tau)}$ in the square $C_m$ whose potential at time $0$ is $E\e_{0,k}$, and iteratively we set
\begin{equation}
\label{numdet1}
\zeta\e_{n}(m,k+\delta\tau^{-1})=\zeta\e_{n-1}(m,k)e^{-\delta\varphi\big(D\e_{n-1,k},i_m\big)}, \ D\e_{n-1,k}\in \mathcal{D}\e_{n-1},
\end{equation}
and for $h=1,\ldots \delta\tau^{-1}$,
\begin{equation}
\label{numdet2}
\zeta\e_{n}(m,h)= \sum\limits_{k}\zeta\e_{n-1}(m,k)\Big(e^{-(\delta-h\tau)\varphi\big(D\e_{n-1,k},i_m\big)}-e^{-(\delta-(h-1)\tau)\varphi\big(D\e_{n-1,k},i_m\big)}\Big).
\end{equation}
Suppose we have computed the number neurons of $Y^{(\ep,\delta,\ell,E,\tau)}$, in $C_m$, with a given potential $E\e_{n-1,k}$. Then, the probability of a neuron with such potential does not spike in the interval $[0,\delta)$ is exactly $e^{-\delta\varphi\big(E\e_{n-1,k},i_m\big)}.$ Thus, we expect that the number of neurons having potential $E\e_{n,k}$ at the next step satisfies
$$\eta\e_{n}(m,k+\delta\tau^{-1})\approx \eta\e_{n-1}(m,k)e^{-\delta\varphi\big(E\e_{n-1,k},i_m\big)}.$$
This relation explains \eqref{numdet1}. Similarly, we notice that the expected fraction of those neurons having potential $E\e_{n-1,k}$, which spike in the interval $J_h=\big[\delta-h\tau,\delta-(h-1)\tau\big)$ is precisely 
$$\eta_{n}(m,k)\Big(e^{-(\delta-h\tau)\varphi\big(E\e_{n-1,k},i_m\big)}-e^{-(\delta-(h-1)\tau)\varphi\big(E\e_{n-1,k},i_m\big)}\Big).$$
Then, summing over $k$ we get the random version of \eqref{numdet2}.

We shall show that the random membrane potentials $E\e_{n,k}$ are close
(proportionally to $\ep^{1/2}$) 
to the deterministic values $D\e_{n,k}$ define above. Furthermore, it will be shown that the 
collection of counting variables
$\eta\e_{n}(m,k)$ are close to the values $\zeta\e_{n}(m,k)$. Here, close  means again to be proportional to $\ep^{1/2}.$ 
\begin{thm}
\label{thm:5}
There exist positive constants $C$, $c_1$ and $c_2$, not depending on $\ep$ such that 
for all $n$ with $0\le n\delta\leq T$, $ E\e_{n,k}\in \mathcal{E}\e_{n}$  and $ D\e_{n,k}\in \mathcal{D}\e_{n},$
$$ \big| E\e_{n,k}-D\e_{n,k} \big| \le C\varepsilon^{1/2},\quad \varepsilon^2 \big|\eta\e_{n}(m,k+\delta\tau^{-1}) -\zeta\e_{n}(m,k+\delta\tau^{-1})\big| \le E\ell^2\varepsilon^{1/2}$$ 
for  $k=1,\ldots, |\mathcal{E}\e_{n}|$ and 
$$\varepsilon^2 \Big|\eta\e_{n}(m,h) -\zeta\e_{n}(m,h)\Big| \le \tau\ell^2\varepsilon^{1/2}, h=1,\ldots, \delta\tau^{-1}, $$
with probability $\geq 1-c_1 e^{c_2\varepsilon^{-1}}.$ 
\end{thm}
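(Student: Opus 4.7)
The natural approach is induction on $n$ with $n\delta \leq T$, since both the random quantities $E^{(\ep)}_{n,k}, \eta^{(\ep)}_n(m,k)$ and their deterministic counterparts $D^{(\ep)}_{n,k}, \zeta^{(\ep)}_n(m,k)$ are defined by the same recursive scheme, one driven by random local averages and random spike counts, the other by their expectations. The base case $n=0$ is almost trivial for the potentials (by construction $E^{(\ep)}_{0,k}=D^{(\ep)}_{0,k}=E_k$), while the count comparison $\varepsilon^2|\eta^{(\ep)}_0(m,k)-\zeta^{(\ep)}_0(m,k)|\leq C\varepsilon^{1/2}$ follows from Hoeffding's inequality applied to the $\ell^2 \varepsilon^{-2}$ i.i.d.\ Bernoulli variables $\mathbf{1}\{\U_i(0)\in I_k\}$ inside $C_m$.

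For the inductive step, I would proceed in three stages. First, assuming the estimates hold at step $n$, I would compare $\bar{y}^{(\ep)}_n(m) = \varepsilon^2\sum_{m'} b(i_m,i_{m'})\sum_k E^{(\ep)}_{n,k}\eta^{(\ep)}_n(m',k)$ with its deterministic analogue $e^{(\ep)}_n(m)=\varepsilon^2\sum_{m'} b(i_m,i_{m'})\sum_k D^{(\ep)}_{n,k}\zeta^{(\ep)}_n(m',k)$, splitting the difference via the triangle inequality into a potentials part (controlled by the inductive bound on $|E^{(\ep)}_{n,k}-D^{(\ep)}_{n,k}|$) and a counts part (controlled by the inductive bound on $\eta^{(\ep)}_n-\zeta^{(\ep)}_n$), using boundedness of $b$ and of $D^{(\ep)}_{n,k}$. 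Both bounds yield a discrepancy of order $\varepsilon^{1/2}$. Second, conditionally on the configuration at time $n\delta$, the spike times $\xi_i$ used to define $N^{(\ep)}_{n+1}(m,h)$ are independent, and the random sums $S^{(\ep)}_{n+1}(m,h)$ are (conditionally) sums of bounded independent contributions of order $\varepsilon^2 a^*$; a Hoeffding/Bernstein bound shows that with probability $\geq 1-c_1 e^{-c_2 \varepsilon^{-1}}$ they deviate from their conditional expectations by at most $O(\varepsilon^{1/2})$.

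In the third stage, I would plug these controls into the defining recursions \eqref{type1}--\eqref{type2} and \eqref{energies1}--\eqref{energies2}, using the Lipschitz dependence of the flow $\Phi_{t,\bar y}(\cdot)$ on its parameter $\bar y$ to transfer closeness of $\bar y^{(\ep)}_n(m)$ to $e^{(\ep)}_n(m)$ into closeness of $E^{(\ep)}_{n+1,\cdot}$ to $D^{(\ep)}_{n+1,\cdot}$. For the counting variables at the new step, I would observe that each $\eta^{(\ep)}_{n+1}(m,k+\delta\tau^{-1})$ is, conditionally on the configuration at time $n\delta$, a binomial sum over the neurons in $C_m$ currently at potential $E^{(\ep)}_{n,k}$, each surviving with probability $e^{-\delta\varphi(E^{(\ep)}_{n,k},i_m)}$; a conditional Hoeffding bound gives concentration around the random conditional mean $\eta^{(\ep)}_n(m,k)e^{-\delta\varphi(E^{(\ep)}_{n,k},i_m)}$. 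Comparing with $\zeta^{(\ep)}_{n-1}(m,k)e^{-\delta\varphi(D^{(\ep)}_{n-1,k},i_m)}$ then splits into a concentration term of size $\varepsilon^{1/2}$ and a deterministic Lipschitz term controlled via the induction hypothesis on the $E$'s and $\eta$'s; the analogous reasoning handles the ``newly created'' cohort of size indexed by $h$.

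The main obstacle is controlling the compounding of errors through the induction: at each step the bound on $|E-D|$ feeds back into the bound on $|\eta-\zeta|$ and vice versa through the non-linear dependence $\Phi_{t,\bar y}$, and the number of induction steps is $T/\delta$ while the number of bins $|\mathcal E^{(\ep)}_n|$ grows linearly in $n$. The delicate point is choosing the Lipschitz constants and the error scale $\varepsilon^{1/2}$ uniformly in $n\leq T/\delta$, and taking a union bound over the polynomially (in $\varepsilon^{-1}$) many events $\{(n,m,k,h)\}$ without destroying the exponential bound $c_1 e^{-c_2\varepsilon^{-1}}$; this works because $e^{-c_2\varepsilon^{-1}}$ dominates any polynomial prefactor in $\varepsilon^{-1}$ as $\varepsilon\to 0$.
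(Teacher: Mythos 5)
Your proposal takes essentially the same route as the paper's proof: induction on $n$ via conditional Hoeffding bounds, with the base case reduced to an i.i.d.\ Bernoulli concentration, the comparison of $\bar y^{(\ep)}_n(m)$ with $e^{(\ep)}_n(m)$ split through the inductive estimates on $E$ versus $D$ and $\eta$ versus $\zeta$, and the counting recursion $\eta^{(\ep)}_{n+1}(m,k+\delta\tau^{-1})=\eta^{(\ep)}_{n}(m,k)-N^{(\ep)}_{n+1}(m,k,\delta)$ exploited together with the Lipschitz regularity of $\varphi$ and of the flow $\Phi_{t,\cdot}$ to propagate errors. One minor imprecision: the union bound at each stage is over a number of indices $(n,m,k,h)$ of order $T\delta^{-1}\ell^{-2}(R_0 E^{-1}+T\tau^{-1})$, which is a constant independent of $\ep$ rather than polynomial in $\ep^{-1}$, so the exponential tail is trivially preserved; this matches the paper's handling, which simply iterates the conditional estimate over the $T\delta^{-1}$ steps and sums the error probabilities, with the constants (as the paper's Remark acknowledges) allowed to depend badly on $\delta,\ell,E,\tau$ but not on $\ep$.
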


The proof is given in the Appendix \ref{proof Thm5}.

\begin{obs}
The constant $C,c_1$ and $c_2$ given in the Theorem \ref{thm:5}, which does depend on $\ep$, turns out to have a bad dependency on the parameters $\delta,\ell, E$ and $\tau$. However all these parameters are fixed in this section, so that  the Theorem \ref{thm:5} implies that both $E\e_{n,k}$ and $D\e_{n,k}$, as well as $\varepsilon^2\eta_{n}(m,k)$ and $ \ep^2\zeta_{n}(m,k)$ are close to each other as $\ep\to 0$ (keeping $\delta,\ell, E, \tau$ fixed).
\end{obs}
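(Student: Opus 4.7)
The observation is a qualitative consequence of Theorem \ref{thm:5} rather than a new claim. My plan is to unpack it in two steps, one for the deterministic bounds and one for the probability of the exceptional set, noting that with $\delta,\ell,E,\tau$ held fixed the constants $C$, $c_1$, $c_2$ are, by the theorem's own statement, independent of $\ep$ and therefore behave as finite positive prefactors throughout the limit $\ep\to 0$.

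First, on the deterministic side, Theorem \ref{thm:5} bounds $|E\e_{n,k}-D\e_{n,k}|$ by $C\ep^{1/2}$ and the two number-count discrepancies by $E\ell^2\ep^{1/2}$ and $\tau\ell^2\ep^{1/2}$. Once $\ell$, $E$ and $\tau$ are frozen the prefactors $C$, $E\ell^2$, $\tau\ell^2$ are fixed real numbers (possibly large in $\delta,\ell,E,\tau$, but this is irrelevant in the limit we are taking), so all three bounds decay like $\ep^{1/2}\to 0$ as $\ep\to 0$.

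Second, on the probabilistic side, the asserted lower bound $1-c_1 e^{c_2\ep^{-1}}$ becomes informative only with the natural sign correction $1-c_1 e^{-c_2\ep^{-1}}$ (the written form is vacuous for small $\ep$, so I read this as a typo); after this reading, the exceptional probability vanishes faster than any polynomial in $\ep$. Moreover, when $\delta,\ell,E,\tau$ are fixed, the index sets in question are all of $\ep$-independent cardinality: $n\leq T/\delta$, $m\leq \ell^{-2}$, $k$ ranges over at most $R_0 E^{-1}+T\tau^{-1}$ values, and $h\leq \delta\tau^{-1}$. A union bound across this finite collection preserves the exponential smallness, giving a joint exceptional probability bounded by some $c_1' e^{-c_2'\ep^{-1}}\to 0$.

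Combining the two steps, for every $\eta>0$ the events $\{|E\e_{n,k}-D\e_{n,k}|>\eta\}$ and $\{\ep^2|\eta\e_n(m,k)-\zeta\e_n(m,k)|>\eta\}$ have probability tending to $0$ as $\ep\to 0$, simultaneously over all admissible indices. This is precisely the closeness asserted in the observation. The only real obstacle is the apparent sign issue in the exponent; once that is read in the intended way, no argument beyond collating Theorem \ref{thm:5} with the cardinality of the (now $\ep$-independent) index sets is required.
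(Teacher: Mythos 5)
Your proposal is correct and coincides with what the paper intends: the remark is an immediate corollary of Theorem \ref{thm:5}, since with $\delta,\ell,E,\tau$ frozen the prefactors $C$, $E\ell^2$, $\tau\ell^2$ are $\ep$-independent, the $\ep^{1/2}$ bounds vanish, and the exceptional probability (correctly read as $c_1e^{-c_2\ep^{-1}}$, a typo in the statement, as the proof in Appendix \ref{proof Thm5} confirms) tends to $0$, uniformly over the finitely many, $\ep$-independent indices $n,m,k,h$. Your extra union bound is harmless but already subsumed in the theorem's uniform formulation.
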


\subsection{The Limit Trajectory of the Auxiliary Process}
As a consequence of the Theorem \ref{thm:5}  we shall prove that the law of $\nu_{n\delta}$ converges in the Hydrodynamic limit to a limit law denote by $\rho^{(\delta,\ell,E,\tau)}_{n\delta}(u,r)$ to be defined below. The limit as $\ep\to 0$ of $D\e_{n,k}$ and $\zeta_{n,m}\big(D\e_{n,k}\big)$  will appear in its definition. In what follows we make explicit the dependence on $\delta,\ell,E,\tau$ writing $D^{(\ep,\delta,\ell,E,\tau)}_{n,k}$, $\zeta^{(\ep,\delta,\ell,E,\tau)}_{n}(m,k)$, $e^{(\ep,\delta,\ell,E,\tau)}_{n}(m),$ $S^{(\ep,\delta,\ell,E,\tau)}_{n}(m,\delta)$ and $S^{(\ep,\delta,\ell,E,\tau)}_{n}(m,h.)$ 

We set for each $1\leq k\leq R_0E^{-1}$ and $1\leq m\leq \ell^{-2}$,
$$
\zeta^{(\delta,\ell,E,\tau)}_{0}(m,k):=
\lim_{\varepsilon\rightarrow 0}
\varepsilon^2\zeta^{(\ep,\delta,\ell,E,\tau)}_{0}(m,k), \ I_{0,k}=I_k\in\mathcal{I}_E.
$$
By Assumption \ref{ass:2} this limit exits and  it is equal to $\int_{C_m}\int_{I_k}\psi_0(u,i_m)du.$
The value $\zeta^{(\delta,\ell,E,\tau)}_{0}(m,k)$ has the nice probabilistic meaning of being
the limit fraction of neurons, inside $C_m$,  whose membrane potential is  $D^{(\ep,\delta,\ell,E,\tau)}_{0,k}=D^{(\delta,\ell,E,\tau)}_{0,k}=E_k.$

The function $\rho\Prm_0(u,r)$ is then obtained by distributing the number $\zeta_{0,m}\big(D^{(\delta,\ell,E,\tau)}_{0,k}\big)$  uniformly over the rectangle $I_{0,k}\times C_m:$
\begin{equation}
\label{roat0}
\rho\Prm_0(u,r):=\frac{\zeta^{(\delta,\ell,E,\tau)}_{0}(m,k)}{E \ell^{2}}, \ \ (u,r)\in I_{0,k}\times C_m.
\end{equation}
We now give its definition at a general step $n\delta$. We first compute the limit potentials $D^{(\delta,\ell,E,\tau)}_{1,h}$  and $D^{(\delta,\ell,E,\tau)}_{n,k+\delta\tau^{-1}}$.  Taking the limit as $\ep\to 0$ of in the expressions \eqref{energies1} and \eqref{energies2}, it follows that 
\begin{equation}
\label{energydeter}
D^{(\delta,\ell,E,\tau)}_{n,k+\delta\tau^{-1}}= \Phi_{\delta,e^{(\delta,\ell,E,\tau)}_{n-1}(m)}\Big(D^{(\delta,\ell,E,\tau)}_{n-1,k}\Big)+s^{(\delta,\ell,E,\tau)}_{n}(m),  
\end{equation}
\begin{equation}
\label{energydeter2}
D^{(\delta,\ell,E,\tau)}_{n,h}= \Phi_{(h-1)\tau,e^{(\delta,\ell,E,\tau)}_{n-1}(m)}(0)+s^{(\delta,\ell,E,\tau,h)}_{n}(m), 
\end{equation}
where for each $n\geq 0$, the functions $e^{(\delta,\ell,E,\tau)}_{n}(m)$, $s^{(\delta,\ell,E,\tau)}_{n}(m)$ and $s^{(\delta,\ell,E,\tau,h)}_{n}(m)$ are obtained by letting $\ep\to 0:$ 
\begin{equation}
\label{barudlet}
e^{(\delta,\ell,E,\tau)}_{n}(m)=\lim_{\ep\to 0} e^{(\ep,\delta,\ell,E,\tau)}_{n}(m),
\end{equation}
\begin{equation}
\label{pdlet}
s^{(\delta,\ell,E,\tau)}_{n}(m)=\lim_{\ep\to 0} \tilde{E}_{Y^{(\ep,\delta,\ell,E,\tau)}(0)}^{(\ep)}\big[S^{(\ep,\delta,\ell,E,\tau)}_{n}(m,\delta)\big], 
\end{equation} 
\begin{equation}
\label{pdleth}
s^{(\delta,\ell,E,\tau,h)}_{n}(m)=\lim_{\ep\to 0}\tilde{E}_{Y^{(\ep,\delta,\ell,E,\tau)}(0)}^{(\ep)}\big[S^{(\ep,\delta,\ell,E,\tau)}_{n}(m,h)\big].
\end{equation}  
We need also to compute the limit as $\ep\to 0$ of the numbers $\zeta^{(\ep,\delta,\ell,E,\tau)}_{n}(m,k).$ By letting $\ep\to 0$ in \eqref{numdet1}, it is clear that 
\begin{equation}
\label{numberrelation}
\zeta^{(\delta,\ell,E,\tau)}_{n}(m,k+\delta\tau^{-1})=\zeta^{(\delta,\ell,E,\tau)}_{n-1}(m,k)e^{-\delta\varphi\Big(D^{(\delta,\ell,E,\tau)}_{n-1,k},i_m\Big)}.
\end{equation}
Similarly, sending $\ep\to 0$ in \eqref{numdet2},  we have that
\begin{equation}
\label{numberrelation2}
\zeta^{(\delta,\ell,E,\tau)}_{n}(m,h)= \sum\limits_{k}\zeta^{(\delta,\ell,E,\tau)}_{n-1}(m,k)\Big(e^{-(h-1)\tau\varphi\big(D^{(\delta,\ell,E,\tau)}_{n-1,k},i_m\big)}-e^{-h\tau\varphi\big(D^{(\delta,\ell,E,\tau)}_{n-1,k},i_m\big)}\Big).
\end{equation}

Now, consider the set of intervals $\mathcal{I}^{(\delta,\ell,E,\tau)}_{n,k}=\Big\{I^{(\delta,\ell,E,\tau)}_{n,k}\Big\}$ where for $h=1,\ldots ,\delta\tau^{-1}$, the intervals are of the form 
\begin{equation}
I^{(\delta,\ell,E,\tau)}_{n,h}=\Big[D^{(\delta,\ell,E,\tau)}_{n,h},D^{(\delta,\ell,E,\tau)}_{n,h+1}\Big),
\end{equation}
while for $k=1,\ldots, |\mathcal{D}^{(\delta,\ell,E,\tau)}_{n-1}|,$ $I^{(\delta,\ell,E,\tau)}_{n,k+\delta\tau^{-1}}$ is the interval having center in the value $D^{(\delta,\ell,E,\tau)}_{n,k+\delta\tau^{-1}}$ whose length satisfies  
\begin{equation}
|I^{(\delta,\ell,E,\tau)}_{n,k+\delta\tau^{-1}}|=e^{-(\al+\la_m)\delta}|I^{(\delta,\ell,E,\tau)}_{n-1,k+\delta\tau^{-1}}|.
\end{equation}
Finally, we set 
\begin{equation}
\label{roatnd}
\rho\Prm_{n\delta}(u,r)=\frac{\zeta^{(\delta,\ell,E,\tau)}_{n}(m,k)}{|I\Prm_{n,k}|\ell^2},  \ (u,r)\in I\Prm_{n,k}\times C_m.
\end{equation} 
Notice that $\rho\Prm_{n\delta}(u,r)$ is obtained by distributing the number $\zeta^{(\delta,\ell,E,\tau)}_{n}(m,k)$  uniformly over the rectangle $I_{n,k}\times C_m$.
Furthermore, for all $r\in[0,1)^2$, the function $\rho\Prm_{n\delta}(u,r)$ is a probability density on $\R_+$, i.e, 
\begin{equation}
1=\int_{0}^{\infty}\rho\Prm_{n\delta}(u,r)du.
\end{equation}
As an immediate consequence of the definition of $\rho\Prm_{n\delta}$ and of Theorem \ref{thm:5}, 
\begin{cor}[Hydrodynamic limit for the auxiliary process]
\label{hydro}
Let $t \in \mathcal T$, $\delta\in \{2^{-q} T, q \in \mathbb N\}$ such that $t=\delta n$ for some positive integer $n$ and $\phi\in \mathcal{S}$. 
Then almost surely, as $\ep\to 0$,
\begin{equation}
\nu\Prm_{t}(\phi)\rightarrow \int_{[0,1)^2}\int_{0}^{\infty}\phi(u,r)\rho\Prm_t(u,r)dudr+O(E+\tau+\ell).
\end{equation}
\end{cor}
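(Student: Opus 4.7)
The plan is to rewrite $\nu\Prm_t(\phi)$ as a finite double sum indexed by the squares $C_m$ and the discrete potential levels $E\e_{n,k}\in \mathcal{E}\e_n$, then apply Theorem \ref{thm:5} to replace the random quantities by their deterministic limits, and finally compare the resulting Riemann-type sum with the integral of $\phi$ against $\rho\Prm_t$, the discrepancy being controlled by the mesh sizes $\ell,E,\tau$.

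First, I would note that in the auxiliary process the membrane potential of every neuron $i\in C_m$ at time $t=n\delta$ belongs to $\mathcal{E}\e_n$, so
\[
\nu\Prm_t(\phi)=\varepsilon^2\sum_{m=1}^{\ell^{-2}}\sum_{k}\eta\e_n(m,k)\,\phi(E\e_{n,k},i_m).
\]
Since $n\delta\le T$ and $|\mathcal{E}\e_n|\le R_0 E^{-1}+n\delta\tau^{-1}$, the number of terms in this double sum is bounded by a constant $K(\delta,\ell,E,\tau,T)$ independent of $\varepsilon$. Theorem \ref{thm:5} yields, with probability at least $1-c_1 e^{c_2\varepsilon^{-1}}$, the bounds $|E\e_{n,k}-D\Prm_{n,k}+O(\varepsilon^{1/2})|\le C\varepsilon^{1/2}$ and $|\varepsilon^2\eta\e_n(m,k)-\varepsilon^2\zeta\e_n(m,k)|\le (E\vee \tau)\ell^2\varepsilon^{1/2}$; because $\phi$ is Schwartz, in particular Lipschitz in $u$, these give
\[
\bigl|\nu\Prm_t(\phi)-\varepsilon^2\sum_{m,k}\zeta\e_n(m,k)\phi(D\e_{n,k},i_m)\bigr|\le C'(\delta,\ell,E,\tau,T)\,\|\phi\|_{C^1}\,\varepsilon^{1/2}.
\]
Summability of $c_1 e^{c_2\varepsilon^{-1}}$ along any sequence $\varepsilon_j\downarrow 0$ together with Borel--Cantelli upgrades this to an almost-sure statement, and passing to the limit we obtain
\[
\nu\Prm_t(\phi)\xrightarrow[\varepsilon\to 0]{\text{a.s.}}\sum_{m,k}\zeta\Prm_n(m,k)\,\phi(D\Prm_{n,k},i_m),
\]
where $\zeta\Prm_n(m,k):=\lim_{\varepsilon\to 0}\varepsilon^2\zeta\e_n(m,k)$ exists by induction on $n$ from the recursions \eqref{numdet1}--\eqref{numdet2} and the existence of the limits \eqref{barudlet}--\eqref{pdleth}.

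Second, I would compare this limit to the target integral. By the definition \eqref{roat0}--\eqref{roatnd} of $\rho\Prm_{n\delta}$,
\[
\int_{[0,1)^2}\!\!\int_{0}^{\infty}\phi(u,r)\rho\Prm_t(u,r)\,du\,dr
=\sum_{m,k}\frac{\zeta\Prm_n(m,k)}{|I\Prm_{n,k}|\,\ell^{2}}\int_{C_m}\!\!\int_{I\Prm_{n,k}}\!\phi(u,r)\,du\,dr.
\]
On each rectangle $C_m\times I\Prm_{n,k}$ the point $(D\Prm_{n,k},i_m)$ is the center in $u$ and $r$, so smoothness of $\phi$ gives
\[
\Bigl|\frac{1}{|I\Prm_{n,k}|\ell^2}\int_{C_m}\!\!\int_{I\Prm_{n,k}}\!\phi(u,r)\,du\,dr-\phi(D\Prm_{n,k},i_m)\Bigr|\le \|\phi\|_{C^1}\bigl(\ell+|I\Prm_{n,k}|\bigr).
\]
By construction the intervals $I\Prm_{n,h}$ with $h\le\delta\tau^{-1}$ have length of order $\tau$, while those with index $k+\delta\tau^{-1}$ have length bounded by $E$ (they are contractions by $e^{-(\alpha+\lambda_m)\delta}\le 1$ of the initial length $E$), hence $|I\Prm_{n,k}|=O(E+\tau)$ uniformly. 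Summing against $\zeta\Prm_n(m,k)$, whose total mass over $(m,k)$ is bounded by $1$ (it is the limit fraction of neurons), yields the claimed $O(\ell+E+\tau)$ remainder.

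The main obstacle is the bookkeeping in the second step: one must check that the discretization error is genuinely of order $\ell+E+\tau$ and not inflated by the (growing in $n$, but $n$-fixed) cardinality of $\mathcal{D}\Prm_n$, and that the exponentially contracting lengths $|I\Prm_{n,k+\delta\tau^{-1}}|$ admit a uniform bound in $k$; both rely on the fact that $\sum_{m,k}\zeta\Prm_n(m,k)\le 1$. A minor but necessary preliminary point is to justify existence of the limits $\zeta\Prm_n(m,k)$, $D\Prm_{n,k}$ and the auxiliary quantities $e\Prm_n(m)$, $s\Prm_n(m)$, $s^{(\delta,\ell,E,\tau,h)}_n(m)$ by induction on $n$, using that at step $n-1$ they are finite linear/smooth functions of the preceding limits and that the initial data converges by Assumption \ref{ass:2}.
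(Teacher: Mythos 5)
Your argument is correct and follows exactly the route the paper intends when it states the corollary is an ``immediate consequence of the definition of $\rho\Prm_{n\delta}$ and of Theorem \ref{thm:5}'': group neurons by square and by potential level, invoke Theorem \ref{thm:5} to replace $E\e_{n,k}$ by $D\e_{n,k}$ and $\ep^2\eta\e_n(m,k)$ by $\ep^2\zeta\e_n(m,k)$ with an $O(\ep^{1/2})$ error, pass to the limit via Borel--Cantelli, and then compare the resulting Riemann sum against $\int\phi\,\rho\Prm_t$, using that the cells $C_m\times I\Prm_{n,k}$ have sides $O(\ell)$ and $O(E+\tau)$ and that $\sum_{m,k}\zeta\Prm_n(m,k)=1$ to avoid inflating the error by the number of cells. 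Two small imprecisions, neither affecting the conclusion: $(D\Prm_{n,h},i_m)$ is the left endpoint (not the center) of $I\Prm_{n,h}$ for $h\le\delta\tau^{-1}$, though this still gives an $O(|I\Prm_{n,h}|)$ Taylor error; and the uniform bound $|I\Prm_{n,k+\delta\tau^{-1}}|\le E$ comes simply from the fact that the contraction factors $e^{-(\alpha+\lambda_m)\delta}\le 1$ only shrink intervals initially of length $E$ or $O(\tau)$, not from the normalisation of $\zeta\Prm_n$, which is needed only for the cardinality issue.
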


\subsection{Convergence of $\rho^{(\delta,\ell,E,\tau)}_{n\delta}$ as $\ell,E,\tau\to 0$ and its Consequences}
We shall next prove that the limit evolution $\rho\Prm_{n\delta}(u,r)$ converges as $\ell,E,\tau\to 0$ to a function denoted by $\rho\d_{n\delta}(u,r).$  
Its explicit expression will be given in the Proposition \ref{convergenceofrod} below.  Before going to this proposition, we shall make some considerations which motivate the definitions of all  ingredients involved in the definition of $\rho\d_{n\delta}(u,r)$.

The convergence of $\rho\Prm_{0}(u,r)$ is direct. Indeed, by \eqref{roat0}, $\rho\Prm_{0}(u,r)=\rho^{(\ell,E)}_{0}(u,r)$ and by smoothness of $\psi_0$, defining $\rho\d_{0}(u,r)=\psi_0(u,r)$, it follows 
\begin{equation}
\label{rodelta0}
\lim_{E,\ell\to 0}  ||\rho^{(E)}_{0}-\rho\d_{0}||_{\infty}=0.
\end{equation}
Now, we set  $\bar{u}\d_0(r):=\lim_{\ell,E,\tau\to 0} e^{(\delta,\ell,E,\tau)}_{0}(m)$ and $\delta p\d_0(r):=\lim_{\ell,E,\tau\to 0}s^{(\delta,\ell,E,\tau)}_{1}(m)$ where the index $m=m(r, \ell)$ is such that  for each $\ell$ , $r\in C_m.$ Let us compute their explicit expressions. By equality \eqref{meanlocalpotential}
$$ e^{(\ep, \delta,\ell,E,\tau)}_{0}(m)=\ep^2\sum_{m'=1}^{\ell^{-2}}\sum_{k=1}^{R_0E^{-1}}b(i_{m},i_{m'}) E^{(E)}_{0,k}\zeta^{(\ep, \delta,\ell,E,\tau)}_{0}(m,k)$$
so that taking the limit as $\ep\to 0$, we get from \eqref{roat0}  that
$$ e^{(\delta,\ell,E,\tau)}_0(m)=E\ell^2\sum_{m',k}b(i_{m'},i_{m}) E^{(E)}_{0,k}\rho\Prm_0(E^{(E)}_{0,k},i_{m'}).$$
From this  last expression and using the uniform convergence in \eqref{rodelta0}, we immediately have 
$$ \bar{u}\d_{0}(r)=\int_{[0,1)^2}\int_0^{\infty}u b(r',r)\rho\d_0(u,r')dudr'.$$
We now derive the expression of $\delta p\d_0(r)$. Notice that by definition, see \eqref{energy0d}, 
$$\tilde{E}_{Y^{(\ep,\delta,\ell,E,\tau)}(0)}^{(\ep)}\big[S^{(\ep,\delta,\ell,E,\tau)}_{1}(m,\delta)\big]=\ep^2\sum_{m'=1}^{\ell^{-2}}\sum_{k=1}^{R_0E^{-1}}a(i_{m'},i_{m}) \zeta^{(\ep, \delta,\ell,E,\tau)}_{0}(m,k)\big(1-e^{-\delta\varphi(E^{(E)}_{0,k},i_{m'})}\big). $$
Thus, it follows as before that
$$s\Prm_{1}(m)=E\ell^2\sum_{m'=1}^{\ell^{-2}}\sum_{k=1}^{R_0E^{-1}}a(i_{m'},i_{m}) \rho_0\Prm\Big(E^{(E)}_{0,k},i_{m'}\Big)\big(1-e^{-\delta\varphi(E^{(E)}_{0,k},i_{m'})}\big). $$
Therefore, using again \eqref{rodelta0} and then taking $\ell,E\to 0$ in the above expression, we deduce that
$$\delta p\d_0(r):=\lim_{\ell,E,\tau\to 0}s^{(\delta,\ell,E,\tau)}_{1,m}=\int_{[0,1)^2}\int_0^{\infty} a(r',r)\rho\d_0(u,r')(1-e^{-\delta\varphi(u,r')})dudr', $$
where
$$ p\d_0(r)=\int_{[0,1)^2}\int_0^{\infty} a(r',r)\rho\d_0(u,r')\frac{(1-e^{-\delta\varphi(u,r')})}{\delta}dudr'.$$
We now shall deduce the expression of $\rho\d_{\delta}.$ Given a pair $(u,r)$, $\rho\d_{\delta}(u,r)$ is interpreted as the fraction of neurons around position $r$ having potential close of the value $u$ at time $\delta.$

Notice that by equations \eqref{numberrelation} and \eqref{roatnd}, for $(u,r)\in I\Prm_{1,k+\delta\tau^{-1}}\times C_m,$
$$\rho\Prm_{\delta}(u,r)=\rho\Prm_{0}\big(\Phi^{-1}_{\delta,e^{(\delta,\ell,E,\tau)}_{0}(m)}(u)-e^{\delta(\al+\la_m)}s^{(\delta,\ell,E,\tau)}_{1}(m),r\big)e^{-\delta[\varphi\big(E^{(E)}_{0,k},i_m\big)
-\al-\la_m]},$$
where for each $E$, $E^{(E)}_{0,k}$ is such that $\Phi^{-1}_{\delta,e^{(\delta,\ell,E,\tau)}_{0}(m)}(u)-e^{\delta(\al+\la_m)}s^{(\delta,\ell,E,\tau)}_{1}(m)\in I\Prm_{1,k}.$ From this and the above equalities, it follows that $$E^{(E)}_{0,k}\to \Phi^{-1}_{\delta,\bar{u}\d_0(r)}(u)-\delta p\d_0(r), \ \mbox{as} \ E,\ell \to 0,$$
so that 
$$\rho\d_{\delta}(u,r)=\rho\d_{0}\big(\Phi^{-1}_{\delta,\bar{u}\d_0(r)}(u)-e^{\delta(\al+\la_r)}  p\d_0(r)\delta,r\big)e^{-\delta\big[\varphi\big(\Phi^{-1}_{\delta,\bar{u}\d_0(r)}(u)-e^{\delta(\al+\la_r)}p\d_0(r)\delta,r\big)
-\al-\la_r\big]},$$
for $u\geq x_0(r)=\frac{\la_r}{\al+\la_r}(1-e^{-\delta(\al+\la_r)})\bar{u}\d_{0}(r)+\delta p_{0}\d(r).$ This formula express the flow of potentials of those neurons which do not spike in the interval $[0,\delta).$ 

Now, take a pair $(u,r)\in I\Prm_{1,h}\times C_m.$ In this case, by \eqref{energydeter2}, there exists a sequence $h=h(u,r,\tau)$ such that
\begin{equation}
\label{convergenedh}
(1-e^{-(h-1)\tau(\al+\la_m)})e\Prm_{0}(m)+s^{(\delta,\ell,E,\tau,h)}_{1}(m)=D\Prm_{1,h}\to u, \ \mbox{as} \ \tau, \ell \to 0,
\end{equation} 
and this implies that there exits a time $0\leq t_{0}(u,r)\leq \delta$ such that 
\begin{multline}
u=(1-e^{-t_0(u,r)(\al+\la_r)})\bar{u}\d_0(r)\\+\int_{[0,1)^2}\int_{0}^{\infty}a(r',r)\rho\d_{0}(v,r')\big[e^{-(\delta-t_0(v,r))\varphi(v,r')}-e^{-\delta\varphi(v,r')}\big]dvdr'.
\end{multline} 
The time in which a neuron at position $r$ has to spike in order to accumulate up to time $\delta$ a potential $u$ is exactly $\delta-t_{0}(u,r).$ 

Similarly notice that, 
$$\frac{D\Prm_{1,h+1}-D\Prm_{1,h}}{\tau} \to \la_r\bar{u}\d_0(r)e^{-(\delta-t_0(u,r))(\al+\la_r)}+\tilde{p}\d_0(r), \ \mbox{as} \ \tau,\ell\to 0,$$
where the function $\tilde{p}\d_0(r)$ is given by
$$\tilde{p}\d_0(r)=\int_{[0,1)^2}\int_{0}^{\infty} a(r',r)\varphi(u',r')\rho\d_{0}(u',r')e^{-t_0(u,r)\varphi(u',r')}du'dr'.$$
Thus, letting in $E,\ell, \tau\to 0$ in \eqref{numberrelation2} we deduce that
$$\rho\d_{\delta}(u,r)=\frac{q\d_0(r)}{\la_r\bar{u}\d_0(r)e^{-(\delta-t_0(u,r))(\al+\la_r)}+\tilde{p}\d_0(r)},$$ 
where $\bar{u}\d_0(r), \tilde{p}\d_0(r)$ and $t_0(u,r)$ as above and
$$q\d_0(r)=\int_{0}^{\infty}\rho\d_{0}(v,r)\varphi(v,r)e^{-t_h(u,r)\varphi(v,r)}dv.$$
To conclude, we notice that the convergence in \eqref{convergenedh} holds if and only if $$u<\frac{\la_r}{\al+\la_r}(1-e^{-\delta(\al+\la_r)})\bar{u}\d_{0}(r)+\delta p_{0}\d(r)=x_0(r).$$

All considerations we have done above, in fact, may be extended directly to any $n\delta$. Thus we have
\begin{prop}
\label{convergenceofrod}
For all $n\delta\leq T$, there exists the limit of $\rho\Prm_{n\delta}(u,r)$ as $\ell,E,\tau\to 0$. Moreover, let $\rho\d_{n\delta}$, $\bar{u}\d_{n\delta}(r)$ and $p\d_{n\delta}(r)$ be functions defined by 
$$\rho\d_{n\delta}(u,r)=\lim_{\ell,E,\tau\to 0}\rho\Prm_{n\delta}(u,r), \ (u,r)\in \R_+\times [0,1)^2, $$ 

\begin{equation}
\label{ubardelta}
\bar{u}\d_{n\delta}(r)=\int_{[0,1)^2}\int_{0}^{\infty}ub(r',r)\rho\d_{n\delta}(u,r')dudr',
\end{equation}
\begin{equation}
\label{pdelta}
p\d_{n\delta}(r)=\int_{[0,1)^2}\int_{0}^{\infty}a(r',r)\frac{\big(1-e^{-\delta\varphi(u,r')}\big)}{\delta}\rho\d_{n\delta}(u,r')dudr',
\end{equation}
and then set
\begin{equation}
\label{xnr}
x_n(r)=\frac{\la_r}{\al+\la_r}(1-e^{-\delta(\al+\la_r)})\bar{u}\d_{n\delta}(r)+\delta p_{n\delta}\d(r), \ r\in [0,1)^2.
\end{equation}
Then for all pairs $(u,r)$ satisfying $u\geq x_n(r),$ 
\begin{multline}
\label{solpdelta1}
\rho\d_{(n+1)\delta}(u,r)=\rho\d_{n\delta}\Big(\Phi^{-1}_{\delta,\bar{u}\d_{n\delta}(r)}(u)-e^{\delta(\al+\la_r)}  p\d_{n\delta}(r)\delta,r\Big)\\
\times \exp\Big\{-\delta\Big[\varphi\Big(\Phi^{-1}_{\delta,\bar{u}\d_{n\delta}(r)}(u)-e^{\delta(\al+\la_r)}  p\d_{n\delta}(r)\delta,r\Big)-\al-\la_r\Big]\Big\}.
\end{multline}
Now, set for any pair $(u,r)$ such that $u<x_n(r),$
\begin{equation}
\tilde{p}\d_{n\delta}(u,r)=\int_{[0,1)^2}\int_{0}^{\infty} a(r',r)\varphi(v,r')\rho\d_{n\delta}(v,r')e^{-t_{n\delta}(u,r)\varphi(v,r')}dvdr' 
\end{equation}
\begin{equation}
q\d_{n\delta}(u,r)=\int_{0}^{\infty}\rho\d_{n\delta}(v,r)\varphi(v,r)e^{-t_{n\delta}(u,r)\varphi(v,r)}dv,
\end{equation}
where the function $t_{n\delta}(u,r)$ appearing in the definition of $\tilde{p}\d_{n\delta}(u,r)$ and $q\d_{n\delta}(u,r)$ is defined through the relation
\begin{multline*}
u=\big(1-e^{-t_{n\delta}(u,r)(\al+\la_r)}\big)\bar{u}\d_{n\delta}(r)\\+\int_{[0,1)^2}\int_{0}^{\infty}a(r',r)\rho\d_{h\delta}\big[e^{-(\delta-t_{n\delta}(u,r))\varphi(v,r')}-e^{-\delta\varphi(v,r')}\big]dvdr'.
\end{multline*}
Then it follows that for all pairs $(u,r)$ with $u<x_n(r),$
\begin{equation}
\label{soldelta2}
\rho\d_{(n+1)\delta}(u,r)=\frac{q\d_{n\delta}(u,r)}{\la_r \bar{u}\d_{n\delta}(r) e^{-(\delta-t_{n\delta}(u,r))(\al+\la_r)}+\tilde{p}\d_{n\delta}(u,r)}.
\end{equation}
Furthermore, in either cases, for each $r\in[0,1)^2$ and  $n\delta\leq T$,
\begin{equation}
\label{ro.d.den}
\int_{0}^{\infty}\rho\d_{n\delta}(u,r)du=1.
\end{equation}
\end{prop}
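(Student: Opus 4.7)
The natural strategy is induction on $n$, with the base case $n=0$ already handled in \eqref{rodelta0} (Assumption \ref{ass:2}), and with the inductive step essentially repeating, at level $n\delta \to (n+1)\delta$, the detailed computation done in the preamble at $0 \to \delta$. Throughout I would work under the induction hypothesis that $\rho^{(\delta,\ell,E,\tau)}_{n\delta} \to \rho^{(\delta)}_{n\delta}$ uniformly on bounded sets as $\ell,E,\tau \to 0$, with $\int_0^\infty \rho^{(\delta)}_{n\delta}(u,r)\,du = 1$ and with support contained in a $T$-dependent bounded interval (which is inherited from Proposition \ref{thm:1bis}).

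First I would propagate the auxiliary quantities. Since $\zeta^{(\delta,\ell,E,\tau)}_{n}(m,k)$, by definition \eqref{roatnd}, equals $\rho^{(\delta,\ell,E,\tau)}_{n\delta}(D^{(\delta,\ell,E,\tau)}_{n,k},i_m)\, |I^{(\delta,\ell,E,\tau)}_{n,k}|\, \ell^2$, the defining sums of $e^{(\delta,\ell,E,\tau)}_n(m)$, $\tilde{E}[S^{(\ep,\delta,\ell,E,\tau)}_{n+1}(m,\delta)]$ and $\tilde{E}[S^{(\ep,\delta,\ell,E,\tau)}_{n+1}(m,h)]$ are Riemann sums over the partition $\mathcal{C}_\ell \times \mathcal{I}^{(\delta,\ell,E,\tau)}_{n}$. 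Using Lipschitz continuity of $a,b,\varphi,\lambda$ together with the inductive convergence of $\rho^{(\delta,\ell,E,\tau)}_{n\delta}$, these sums converge to the integrals defining $\bar{u}^{(\delta)}_{n\delta}(r)$ (equation \eqref{ubardelta}) and $\delta p^{(\delta)}_{n\delta}(r)$ (equation \eqref{pdelta}), respectively, in complete analogy with the $n=0$ computation in the preamble.

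Next I would treat the two regions separately. For $u \geq x_n(r)$, the pair $(u,r)$ lies in an interval of type $I^{(\delta,\ell,E,\tau)}_{n+1,k+\delta\tau^{-1}}\times C_m$, and combining \eqref{numberrelation} with definition \eqref{roatnd} and the length identity $|I^{(\delta,\ell,E,\tau)}_{n+1,k+\delta\tau^{-1}}| = e^{-(\al+\la_m)\delta}|I^{(\delta,\ell,E,\tau)}_{n,k}|$ yields
\begin{equation*}
\rho^{(\delta,\ell,E,\tau)}_{(n+1)\delta}(u,r) = \rho^{(\delta,\ell,E,\tau)}_{n\delta}\bigl(D^{(\delta,\ell,E,\tau)}_{n,k},i_m\bigr)\,\exp\bigl\{-\delta[\varphi(D^{(\delta,\ell,E,\tau)}_{n,k},i_m)-\al-\la_m]\bigr\}.
\end{equation*}
Inverting \eqref{energydeter} and passing to the limit via the inductive uniform convergence gives \eqref{solpdelta1}. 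For $u < x_n(r)$, the point $u$ lies in some spiking interval $I^{(\delta,\ell,E,\tau)}_{n+1,h}$, and here lies the main obstacle: I must identify the limit of the discrete index $h=h(u,r,\tau,\ell,E)$. Using the explicit form of $\Phi$ from \eqref{flowy} in \eqref{energydeter2}, the requirement $D^{(\delta,\ell,E,\tau)}_{n+1,h} \to u$ rewrites precisely as the implicit equation defining $t_{n\delta}(u,r)$; the right-hand side is strictly increasing in $t_{n\delta}$, which guarantees uniqueness of the limit and hence that $(h-1)\tau \to \delta - t_{n\delta}(u,r)$. A Taylor expansion of $\Phi$ in $\tau$ together with convergence of Riemann sums for the spike contributions yields
\begin{equation*}
\frac{D^{(\delta,\ell,E,\tau)}_{n+1,h+1} - D^{(\delta,\ell,E,\tau)}_{n+1,h}}{\tau}\;\longrightarrow\; \la_r\, \bar{u}^{(\delta)}_{n\delta}(r)\, e^{-(\delta - t_{n\delta}(u,r))(\al+\la_r)} + \tilde{p}^{(\delta)}_{n\delta}(u,r),
\end{equation*}
while \eqref{numberrelation2}, again after substituting the inductive description of $\zeta^{(\delta,\ell,E,\tau)}_n$, becomes a Riemann sum converging to $\tau\,q^{(\delta)}_{n\delta}(u,r)$. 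Dividing the two limits, as in definition \eqref{roatnd}, produces \eqref{soldelta2}.

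Finally, the normalization \eqref{ro.d.den} at step $(n+1)\delta$ follows from the identity $\int_0^\infty \rho^{(\delta,\ell,E,\tau)}_{(n+1)\delta}(u,r)\,du = 1$ (a direct consequence of \eqref{numdet1}--\eqref{numdet2} and the fact that the same identity holds at step $n$), together with dominated convergence using the uniform-in-$\ell,E,\tau$ compact support of $\rho^{(\delta,\ell,E,\tau)}_{n\delta}(\cdot,r)$ provided by Proposition \ref{thm:1bis}. The hardest part of the argument is the implicit function analysis needed to identify $t_{n\delta}(u,r)$ and to justify the pointwise convergence of the Riemann sum in the spiking region; everything else reduces to uniform continuity estimates that propagate cleanly through the induction.
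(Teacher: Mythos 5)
Your proposal is correct and takes essentially the same approach as the paper: the paper works out the $n=0\to 1$ passage in detail in the preamble (deriving $\bar u^{(\delta)}_0$ and $p^{(\delta)}_0$ as Riemann-sum limits, treating the non-spiking region $u\geq x_0(r)$ via \eqref{numberrelation}--\eqref{roatnd} and the spiking region $u<x_0(r)$ via the implicit equation for $t_0(u,r)$), and then simply asserts that ``all considerations above may be extended directly to any $n\delta$.'' Your induction is precisely the formalization of that assertion, with the same two-region decomposition, the same Riemann-sum identification of $\bar u^{(\delta)}_{n\delta}$ and $p^{(\delta)}_{n\delta}$, and the same monotonicity argument pinning down $t_{n\delta}(u,r)$.
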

%
Notice that by \eqref{solpdelta1}, $u\mapsto \rho\d_{(n+1)\delta}(u,r)$ has support $[0,R_{n+1}(r)]$, where
\begin{equation}
R_{n+1}(r)=e^{-\delta(\al+\la_r)}R_{n}(r)+\frac{\la_r}{\al+\la_r}(1-e^{-\delta(\al+\la_r)})\bar{u}\d_{n\delta}(r)+\delta p_{n\delta}\d(r).
\end{equation}
Since 
and $p\d_{n\delta}(r)\leq \varphi^* a^*$ and  $\bar{u}\d_{n\delta}(r)\leq R_n(r)$ it is straightforward to check that for all $n$ with $n\delta\leq T$,
\begin{equation}
R_n(r)\leq R_{n-1}(r)+\varphi^* a^* \delta\leq R_0+n\delta \varphi^*a^*\leq R_0 + T\varphi^*a^*.
\end{equation}
Thus the supports of $\rho\d_{n\delta}$ are all upper bounded by a constant which is uniform for all r and n.  An iterative application of \eqref{solpdelta1} and the explicit expression of the inverse flow $\Phi_{\delta,\bar{u}\d_{n\delta}(r)}^{ - 1 }(u) ,$ implies that
\begin{multline}
\label{3.5}
\rho^{(\delta)}_{(n+1)\delta}(u)=e^{\delta(n+1)(\al+\la)}\psi_{0}\Big(e^{  \lambda (n+1) \delta } u- \sum_{ s=0}^{n} e^{ \lambda (s+1) \delta(\al+\la_r) }\big[x_s(r)   -  2p^{(\delta)}_{s\delta}(r)\delta\big],r\Big) \\ \times 
 \exp\Big\{-\sum_{s=0}^n \delta  \varphi\Big( e^{  \lambda (n+1 - s ) \delta } u- \sum_{h=s}^n  e^{ (h-s) \delta(\al+\la_r) }\big[x_h(r)
- 2p^{(\delta)}_{h\delta}(r)\delta\Big],r\Big)\Big\}
\end{multline}
for all
\begin{equation}
\label{uestar}
u\geq u^*_{(n+1)}(r)=\sum_{s=0}^{n} e^{-\delta(n-s)(\al+\la_r)}\Big[ \frac{\la_r}{\al+\la_r}\big(1-e^{-\delta(\al+\la_r)}\big)\bar{u}\d_{s\delta}(r)+\delta p\d_{s\delta}(r)\Big],
\end{equation}
being $ \psi_0 $ the initial density and and $x_n(r)$ is defined in \eqref{xnr}.

The following results will be used in the analysis of the hydrodynamic of the true process. We first collect some properties of the functions $\bar u\d_{n\delta}(r)$ and  $p\d_{n\delta}(r).$
\begin{prop}
\label{eqconbarup}
There exist $\delta_0$ and a positive constant $C$ depending on $\varphi^*,a^*,T,R_0,\la^*$ and $\al$ such that for all $\delta \leq \delta_0$ and all $n$ satisfying $n\delta\leq T$,
\begin{enumerate}
\item $|\bar u\d_{(n+1)\delta}(r)-\bar u\d_{n\delta}(r)|+|p\d_{(n+1)\delta}(r)-\bar p\d_{n\delta}(r)|\leq C\delta,$ 
\item $|\bar u\d_{n\delta}(r)-\bar u\d_{n\delta}(r')|+|p\d_{n\delta}(r)-\bar p\d_{n\delta}(r')|\leq C|r-r'|.$
\end{enumerate}
\end{prop}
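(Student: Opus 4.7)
The plan is to reduce both assertions to a single one-step weak estimate: for every test function $f:\R_+\times[0,1)^2\to\R$ that is bounded and Lipschitz in $u$ uniformly in $r'$,
\begin{equation*}
\Big|\int_{[0,1)^2}\!\!\int_0^\infty f(u,r')\big[\rho\d_{(n+1)\delta}(u,r')-\rho\d_{n\delta}(u,r')\big]\,du\,dr'\Big|\le C\delta\big(\|f\|_\infty+\|f\|_{\mathrm{Lip}}\big),
\end{equation*}
with $C$ depending only on $\varphi^*$, $a^*$, $\la^*$, $\al$, $T$ and $R_0$. Both inequalities in (i) then follow by specializing to $f(u,r')=u\,b(r',r)$ (for $\bar u\d$) and $f(u,r')=a(r',r)\,\delta^{-1}(1-e^{-\delta\varphi(u,r')})$ (for $p\d$); since the support of $\rho\d_{n\delta}(\cdot,r')$ lies in $[0,R_0+T\varphi^*a^*]$ for every $n\delta\le T$ (as proved just after~\eqref{3.5}), and since $a,b,\varphi'$ are bounded, both choices satisfy the hypotheses with norms bounded uniformly in $\delta$ and $n$.

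To prove the one-step estimate, fix $r'$ and split $\int_0^\infty f(u,r')\rho\d_{(n+1)\delta}(u,r')\,du$ at the threshold $x_n(r')$ from~\eqref{xnr}. On $[x_n(r'),\infty)$ insert~\eqref{solpdelta1} and perform the change of variable $w=\Phi^{-1}_{\delta,\bar u\d_{n\delta}(r')}(u)-e^{\delta(\al+\la_{r'})}p\d_{n\delta}(r')\delta$; its Jacobian $du/dw=e^{-\delta(\al+\la_{r'})}$ exactly cancels the factor $e^{\delta(\al+\la_{r'})}$ inside the exponential of~\eqref{solpdelta1}, producing
\begin{equation*}
\int_0^\infty f\big(\Phi_{\delta,\bar u\d_{n\delta}(r')}(w+\delta\,e^{\delta(\al+\la_{r'})}p\d_{n\delta}(r')),r'\big)\,e^{-\delta\varphi(w,r')}\,\rho\d_{n\delta}(w,r')\,dw.
\end{equation*}
The $\Phi$-argument differs from $w$ by $O(\delta)$ uniformly on the bounded support of $\rho\d_{n\delta}(\cdot,r')$, and $|e^{-\delta\varphi(w,r')}-1|\le\delta\varphi^*$, so this upper piece equals $\int_0^\infty f(w,r')\rho\d_{n\delta}(w,r')\,dw$ up to an error $C\delta(\|f\|_\infty+\|f\|_{\mathrm{Lip}})$. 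The remaining piece on $[0,x_n(r'))$ is bounded by $\|f\|_\infty$ times the $\rho\d_{(n+1)\delta}$-mass of that interval; combining~\eqref{ro.d.den} with the identity just derived, this mass equals $\int_0^\infty(1-e^{-\delta\varphi(w,r')})\rho\d_{n\delta}(w,r')\,dw\le\delta\varphi^*$. Integrating over $r'$ closes the one-step bound.

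For part (ii) the spatial Lipschitz estimates are immediate from the Lipschitz continuity of $a$ and $b$, the uniform support bound $R_0+T\varphi^*a^*$, and the normalization~\eqref{ro.d.den}: for instance
\begin{equation*}
|\bar u\d_{n\delta}(r)-\bar u\d_{n\delta}(r')|\le(R_0+T\varphi^*a^*)\|b\|_{\mathrm{Lip}}|r-r'|,
\end{equation*}
and an analogous inequality with $\|a\|_{\mathrm{Lip}}\varphi^*$ governs $p\d$. The only delicate point in the whole argument is the change-of-variables step above: one must check that the Jacobian exactly cancels the $e^{\delta(\al+\la_{r'})}$ prefactor so that no exponential-in-$\delta^{-1}$ blow-up enters $C$, and that the spiking-regime contribution is controlled purely by the total mass discrepancy rather than by the (potentially large) pointwise density given by~\eqref{soldelta2}.
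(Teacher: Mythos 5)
Your proposal is correct, and it follows the same overall skeleton as the paper's proof — split the integral at the threshold $x_n(r')$, insert \eqref{solpdelta1} on the upper piece, and perform the change of variable $w=\Phi^{-1}_{\delta,\bar u\d_{n\delta}(r')}(u)-e^{\delta(\al+\la_{r'})}p\d_{n\delta}(r')\delta$, noting the Jacobian cancels the $e^{\delta(\al+\la_{r'})}$ prefactor. Where you genuinely diverge is in the treatment of the spiking-regime piece $[0,x_n(r'))$. The paper bounds this term by invoking the pointwise bound \eqref{C1} on the density $\rho\d_{(n+1)\delta}$ coming from \eqref{soldelta2}, then multiplies by the width $x_n(r')$ and uses the asymptotic $C_1(\delta,n,r,\varphi^*)\,x_n(r)\to 1$ together with $x_n(r)\le C\delta$. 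You instead apply the change-of-variable identity with $f\equiv 1$ and the normalization \eqref{ro.d.den} to deduce that the $\rho\d_{(n+1)\delta}$-mass on $[0,x_n(r'))$ equals $\int_0^\infty(1-e^{-\delta\varphi(w,r')})\rho\d_{n\delta}(w,r')\,dw\le\delta\varphi^*$, which controls that piece without ever touching the pointwise size of $\rho\d_{(n+1)\delta}$. This is a cleaner route: it bypasses the need to verify the uniform convergence $C_1 x_n\to 1$, which is the most delicate point in the paper's version, and it packages both assertions (for $\bar u\d$ and for $p\d$) into a single one-step weak estimate applied to two choices of test function. The only caution I would add is that $f(u,r')=u\,b(r',r)$ is not globally bounded, so the statement of your one-step estimate should be phrased on the uniformly bounded support $[0,R_0+T\varphi^*a^*]$ (as you indicate) or with $f$ modified outside it; with that made explicit, the argument is complete.
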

%
\begin{proof}
We will show only that
$|\bar u\d_{(n+1)\delta}(r)-\bar u\d_{n\delta}(r)|<C\delta$, since all other bounds are likewise obtained.  
By definition, 
\begin{multline}
\label{UbarraDeltaIterado}
{\bar u\d_{(n+1)\delta}(r)} = 
\int_{[0,1)^2} \int_0^{x_{n}(r\p)} ub(r\p,r)\rho\d_{(n+1)\delta}(u,r\p)du dr\p \\ + \int_{[0,1)^2}\int_{x_{n}(r\p)}^{\infty}ub(r\p,r)\rho\d_{(n+1)\delta}(u,r\p)du dr\p. \hspace{2cm}
\end{multline}
Using \eqref{solpdelta1} in the second term in \eqref{UbarraDeltaIterado} and then making change of variables
$$
v=\Phi^{-1}_{\delta,\bar{u}\d_{n\delta}(r)}(u)-\delta e^{\delta(\al+\la_r)}p\d_{n\delta}(r), \qquad \frac{d}{du}v=e^{\delta(\al+\la_r)}, 
$$
the second term in (\ref{UbarraDeltaIterado}) becomes
\begin{eqnarray*} 
\int_{[0,1)^2} \int^{\infty}_{0} b(r\p,r)\Big[\Phi_{\delta,\bar{u}\d_{n\delta}(r)}(u)+\delta p\d_{n\delta}(r)\Big]
\rho\d_{n\delta}(u,r\p)e^{-\delta\varphi(u,r\p)}dudr\p.
\end{eqnarray*}
Since  $p\d_{n\delta}(r)\leq \varphi^* a^*$ and  $\bar{u}\d_{n\delta}(r)\leq R_n(r)\leq R_0+T\varphi^*a^* $, we deduce from \eqref{UbarraDeltaIterado} and the integral above that there exists a positive constant depending on $\varphi^*,a^*, T, R_0,\la^*$ and $\alpha$ such that
\begin{equation}
\label{difubarnn1}
\big|\bar{u}\d_{(n+1)\delta}(r)-\bar{u}\d_{n\delta}(r)\big|\leq \int_{[0,1)^2} \int_0^{x_{n}(r\p)} ub(r\p,r)\rho\d_{(n+1)\delta}(u,r\p)du dr\p + C\delta.
\end{equation}
Thus, it suffices to show that the integral on the right-hand side of \eqref{difubarnn1} is $\leq C\delta.$
For that sake, we first notice that by \eqref{soldelta2}, for any pair $(u,r)$ with $u<x_{n}(r)$
\begin{equation}
\label{C1}
\rho\d_{(n+1)\delta}(u,r)\leq\frac{
\varphi^*
}{
\min\{e^{-\delta(\alpha+\lambda_{r})}, e^{-\delta\varphi^*} \}(\lambda_{r}\bar u\d_{h\delta}(r)+p\d_{h\delta}(r))
}:=C_1(\delta,n,r,\varphi^*).
\end{equation}
Then, we upper bound the integral in \eqref{difubarnn1} by
\begin{eqnarray}
\label{boundbarun1barun}
\int_{[0,1)^2}\frac{C_1(\delta,n,r',\varphi^*)}{2}x^2_{n}(r\p)b(r,r')du dr\p.  
\end{eqnarray}
Since $C_1(\delta,n,r,\varphi^*)x_{n}(r)\to 1$ as $\delta\to 0$ uniformly in $r$ and $n$, and $x_{n}(r)\leq C\delta$, we get the result from  \eqref{boundbarun1barun}.
\end{proof}
Finally, we prove equicontinuity of the function $\rho\d_{n\delta}$. The proof is an immediate consequence of the definition of $\rho\d_{n\delta}$ and the Proposition    \ref{eqconbarup}.
\begin{prop}
\label{eqconrhod}
There exists a constant C such that for all $\delta$ sufficiently small, for any $n$ and $m$, with $n\delta\leq T$, $r\in [0,1)^2$,
\begin{equation}
|\rho\d_{n\delta}(u,r)-\rho\d_{n\delta}(v,r)|\leq C\max\{|u-v|,\delta\}, \ \mbox{for} \ u,v\in \big[0,u^{*}_{n}(r)\big)
\end{equation}
and
$$|\rho\d_{n\delta}(u,r)-\rho\d_{n\delta}(v,r)|\leq C|u-v|, \ \mbox{for} \ u,v\in\big[u^{*}_{n}(r),\infty\big). $$
Moreover, for all $n\delta\leq T$ and all $r,r'\in[0,1)^2,$
\begin{equation}
|\rho\d_{n\delta}(u,r)-\rho\d_{n\delta}(u,r')|\leq C|r-r'|, \ \mbox{for} \ u\in \big[0,u^{*}_{n}(r)\wedge u^{*}_{n}(r') \big]\cup \big[u^{*}_{n}(r)\vee u^{*}_{n}(r'),\infty\big),
\end{equation}
and for all $n\delta\leq T$, $m\delta\leq T,$
\begin{equation}
|\rho\d_{n\delta}(u,r)-\rho\d_{m\delta}(u,r)|\leq C|n-m|\delta, \ \mbox{for} \ u\in \big[0,u^{*}_{n}(r)\wedge u^{*}_{m}(r) \big]\cup \big[u^{*}_{n}(r)\vee u^{*}_{m}(r),\infty\big).
\end{equation}
Furthermore, when $\psi_0$ fulfils the conditions in Theorem \eqref{thm:3}, we have additionally that
$$|\rho\d_{n\delta}{(u^{*}_{n}(r)}_+,r)-\rho\d_{n\delta}({u^{*}_{n}(r)}_-,r)|\leq C\delta $$
and
$$ |\rho\d_{n\delta}(u,r)-\rho\d_{n\delta}(u,r')|\leq C|r-r'|, \ \ |\rho\d_{n\delta}(u,r)-\rho\d_{m\delta}(u,r)|\leq C|n-m|\delta .$$
\end{prop}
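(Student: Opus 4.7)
The plan is to read all three Lipschitz estimates off the explicit formulas \eqref{solpdelta1}--\eqref{soldelta2} (or, above $u^*_n(r)$, off the iterated formula \eqref{3.5}), feeding in Proposition \ref{eqconbarup} to control the dependence of $\bar u\d_{n\delta}(r)$, $p\d_{n\delta}(r)$, $x_n(r)$ and $u^*_n(r)$ on $r$ and $n$. I will treat the three variables $u$, $r$, $n$ in turn and deal with the jump at $u^*_n(r)$ separately at the end.

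For Lipschitz in $u$, on $[u^*_n(r),\infty)$ I use \eqref{3.5}: since $\psi_0\in C^1$ has compact support $[0,R_0]$ and $\varphi\in C^1$ is bounded, differentiating in $u$ produces a factor uniformly bounded on $[0,T]$ (the exponents $e^{(n+1-s)\delta(\al+\la_r)}$ stay bounded by $e^{T(\al+\la^*)}$), which gives the Lipschitz bound in $u$. For $u<u^*_n(r)$ I use \eqref{soldelta2}. Differentiating the implicit relation defining $t_{n\delta}(u,r)$, the $\partial_t$-derivative is bounded below by $\la_r \bar u\d_{n\delta}(r)\,e^{-\delta(\al+\la^*)}$, so the implicit function theorem gives $|\partial_u t_{n\delta}|\leq C$. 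This in turn makes $q\d_{n\delta}(\cdot,r)$, $\tilde p\d_{n\delta}(\cdot,r)$ and the denominator of \eqref{soldelta2} all Lipschitz in $u$, and the denominator itself stays bounded below because $\bar u\d_{n\delta}(r)$ does (from $\psi_0>0$ on $[0,R_0)$, which propagates). The extra $\delta$ in $\max\{|u-v|,\delta\}$ absorbs the jumps of $\rho\d_{n\delta}$ at the earlier break points $u^*_j(r), j\leq n$, which enter the integrals defining $q\d_{n\delta}$ and $\tilde p\d_{n\delta}$.

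For Lipschitz in $r$ and $n$, I restrict to $u$ outside the corresponding interval $[u^*_n(r)\wedge u^*_n(r'), u^*_n(r)\vee u^*_n(r')]$ (resp.\ $[u^*_n(r)\wedge u^*_m(r), u^*_n(r)\vee u^*_m(r)]$) so that the same formula, either \eqref{3.5} or \eqref{soldelta2}, applies at both points. The $r$-Lipschitz estimate then follows from Proposition \ref{eqconbarup}(ii) combined with the Lipschitz continuity of $\la_r$, $a$, $b$ and $\varphi(u,\cdot)$ in $r$, together with the direct estimates $|x_n(r)-x_n(r')|, |u^*_n(r)-u^*_n(r')|\leq C|r-r'|$ read off \eqref{xnr}--\eqref{uestar}. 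The $n$-Lipschitz estimate follows by summing $|n-m|$ one-step bounds obtained from Proposition \ref{eqconbarup}(i) and the chain rule applied to \eqref{solpdelta1}--\eqref{soldelta2}.

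The main obstacle is the \emph{Furthermore} part, valid under the compatibility hypothesis of Theorem \ref{thm:3}: one must show that the jump of $\rho\d_{n\delta}(\cdot,r)$ across $u^*_n(r)$ is $O(\delta)$. I compute the two one-sided limits at $u=u^*_n(r)$. From \eqref{solpdelta1}, using $\varphi(0,r)=0$, the limit from above is $\rho\d_{n\delta}(0,r)\,e^{\delta(\al+\la_r)}$; from \eqref{soldelta2}, since $t_{n\delta}(u,r)\to\delta$ as $u\uparrow u^*_n(r)$, the limit from below is $q\d_{n\delta}(u^*_n(r)_-,r)/(\la_r \bar u\d_{n\delta}(r)+\tilde p\d_{n\delta}(u^*_n(r)_-,r))$. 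The compatibility $\psi_0(0,r)=q_0(r)/(\la_r\bar u_0(r)+p_0(r))$ makes these two expressions agree up to $O(\delta)$ at $n=0$ by Taylor expansion in $\delta$, and an induction on $n$ propagates an $O(\delta)$ jump at every subsequent step, using the one-step recursion \eqref{solpdelta1}--\eqref{soldelta2} and the already-established Lipschitz bounds. The remaining statements of the \emph{Furthermore} part then follow by extending the previous $r$- and $n$-Lipschitz estimates across $u^*_n(r)$ using this $O(\delta)$ jump bound.
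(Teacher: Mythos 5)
The paper gives no proof of Proposition \ref{eqconrhod} beyond the single sentence ``The proof is an immediate consequence of the definition of $\rho\d_{n\delta}$ and the Proposition \ref{eqconbarup}.'' Your proposal, reading the Lipschitz estimates directly off the explicit formulas \eqref{3.5} (for $u\geq u^*_n(r)$) and \eqref{soldelta2} (for $u<u^*_n(r)$), feeding in Proposition \ref{eqconbarup} for the $r$- and $n$-dependence and handling the jump via Taylor expansion plus induction, is exactly the intended route and is more explicit than what the authors write.

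One place you should be more careful is your explanation of the factor $\max\{|u-v|,\delta\}$ on $[0,u^*_n(r))$: you say it ``absorbs the jumps of $\rho\d_{n\delta}$ at the earlier break points $u^*_j(r),\ j\leq n$,'' but those are not the jump locations. The jump set of $\rho\d_{n\delta}(\cdot,r)$ consists of the forward images of $0$ under $k,\ldots,n-1$ one-step transports, namely the points $d^{(n)}_k=\sum_{s=k}^{n-1}e^{-\delta(n-1-s)(\al+\la_r)}x_s(r)$, $k=0,\ldots,n-1$, of which $d^{(n)}_0=u^*_n(r)$ is the largest. The interior points $d^{(n)}_1,\ldots,d^{(n)}_{n-1}$ all lie in $[0,u^*_n(r))$ and are spaced $O(\delta)$ apart (since $d^{(n)}_{k-1}-d^{(n)}_k=e^{-\delta(n-k)(\al+\la_r)}x_{k-1}(r)$ with $x_{k-1}(r)\leq C\delta$). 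You then need the quantitative claim, which your sketch omits, that each of these interior jumps is itself $O(\delta)$ \emph{without} any compatibility assumption: comparing the one-sided limits at the newly created break point $x_k(r)$ via \eqref{solpdelta1} and \eqref{soldelta2} yields quantities built from $\rho\d_{(k-1)\delta}$ and $\rho\d_{k\delta}$ respectively, and Proposition \ref{eqconbarup}(i) bounds the one-step change of $\bar u\d$, $p\d$ (and hence of $q\d$, $\tilde p\d$) by $C\delta$. Only with both the $O(\delta)$ spacing and the $O(\delta)$ size per jump does the claimed bound $C\max\{|u-v|,\delta\}$ follow (number of jumps between $u$ and $v$ is $O(1+|u-v|/\delta)$, each contributing $O(\delta)$). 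The compatibility hypothesis of Theorem \ref{thm:3} is only needed, as you correctly say, to make the \emph{boundary} jump at $u^*_n(r)$ itself $O(\delta)$, since at $n=0$ that jump is $|\psi_0(0,r)-q_0(r)/(\la_r\bar u_0(r)+p_0(r))|+O(\delta)$. Apart from this imprecision and a minor slip in the lower bound you quote for $\partial_t$ of the implicit relation (it should involve the factor $(\al+\la_r)e^{-t(\al+\la_r)}\bar u\d_{n\delta}(r)$, not $\la_r\bar u\d_{n\delta}(r)e^{-\delta(\al+\la^*)}$, though both are positive lower bounds), the argument is sound.
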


\section{Hydrodynamic limit for the True Process }\label{sec:7}

We shall in the sequel complete the proof of Theorem \ref{thm:2}.  Given any positive real number $T$, recall $\mathcal T = \Big\{ t \in [0,T]: t = n2^{-q}T, q,n \in \mathbb N\Big\}.$ For each $\delta=2^{-q}T$, $q\geq 1$, we consider the following function defined on $[0,T]\times [0,1)^2:$
$$F\d_t(r)=p_{n\delta}\d(r)+\frac{\left(p_{(n+1)\delta}\d(r)-p_{n\delta}\d(r)\right)}{ \delta}(t-n\delta), \ \mbox{for} \ n\delta\leq t< (n+1)\delta, \ r\in [0,1)^2.$$
By Proposition \ref{eqconbarup} there exists a constant $C>0$ not depending on $\delta$ such that 
\begin{equation}
\label{eqcon}
|F_t\d(r)-F_s\d(r')|\leq C(|t-s|+|r-r'|), \ \mbox{for} \ (t,r)\in [0,T]\times [0,1)^2.
\end{equation}

Since \eqref{eqcon} holds for all $\delta=2^{-q}T$, it follows from the Arzel\`a-Ascoli Theorem that the sequence $F_t\d(r)$ converges by subsequences in the sup norm to a continuous function which we denote by $p\0 _t(r),t\in[0,T],r\in[0,1]^2$. In particular, it follows

\begin{equation}
\label{hyeq1}
\lim_{\delta \to 0 }\sup_{r\in[0,1]^2}\sup_{n:\ n\delta\leq T}\sup_{t\in [n\delta,(n-1)\delta]}|p_t\0(r)-p_{k\delta}\d(r)|=0.
\end{equation}
An analogous argument implies that there exists also a continuous function $\bar{u}\0 _t(r),t\in[0,T],r\in[0,1]^2$ in which
\begin{equation}
\label{hyeq2}
\lim_{\delta \to 0 }\sup_{r\in[0,1]^2}\sup_{n:\ n\delta\leq T}\sup_{t\in [n\delta,(n-1)\delta]}|\bar{u}_t\0(r)-\bar{u}_{n\delta}\d(r)|=0.
\end{equation}

Defining for each $t\in[0,T]$, $r\in[0,1]^2$:
\begin{equation}
u^{*,0}_t(r)=e^{-(\al+\la_r)t}\Bigg(\int_{0}^t\la_r\bar{u}_s\0(r)e^{(\al+\la_r)s}ds+\int_{0}^tp_s\0(r)e^{(\al+\la_r)s}ds\Bigg),
\end{equation}
it follows from \eqref{hyeq1} and \eqref{hyeq2} that
\begin{equation}
\label{hyeq3}
\lim_{\delta \to 0 }\sup_{r\in[0,1]^2}\sup_{n:\ n\delta\leq T}\sup_{t\in [n\delta,(n-1)\delta]}|u_t^{*,0}(r)-u^{*,\delta}_{n\delta}(r)|=0,
\end{equation}
where, to stress the dependence on $\delta$, we write $u^{*,\delta}_{n\delta}(r)$ instead of $u^{*}_{n\delta}(r)$ defined in \eqref{uestar}.  

In what follow we write $\zeta$ to denote the elements of the form $\zeta=2^{-q}$, with $q\in \N$. By \eqref{hyeq3}, for each $\zeta$ there exits $\delta_{\zeta}$ such that for all $\delta<\delta_{\zeta}$ we have the following. For all $r\in[0,1)^2$ and $n$ such that $n\delta\leq T$, if $|u_{n\delta}^{*,0}(r)-u|\geq \zeta$ then $u_{n\delta}^{*,0}(r)-u$ has the same sing as $u_{n\delta}^{*,\delta}(r)-u$. By using the Proposition \ref{eqconrhod} and a Arzel\`a-Ascoli type of argument to deduce that the function $\rho_t\d(u,r)$ converges uniformly to a continuous function $\rho_t(r,u),t\in\mathcal T,r\in[0,1)^2,|u-u_t^{*,0}(r)|\geq \zeta$ with compact support. We can then extend continuously $\rho_t(u,r)$ to all $t\in[0,T]$, $r\in[0,1)^2$ and $|u-u_t^{*,0}(r)|\geq \zeta$. Following a standard diagonalization procedure we the convergence above to all $t,r$ and $u$ with $u\neq u^{*,0}_t(r).$ Then by \eqref{ro.d.den},\eqref{ubardelta}, \eqref{pdelta} and the Lebesgue Dominated Convergence Theorem, for all $t\in\mathcal{T}$,

$$1=\int_{0}^{\infty}\rho_t(u,r)du, \qquad p_t\0(r)=\int_{[0,1]^2}\int_{0}^{\infty}a(r',r)\varphi(u,r')\rho_t(u,r')dudr' $$
and
$$ \bar{u}_t\0(r)=\int_{[0,1]^2}\int_{0}^{\infty}ub(r',r)\rho_t(u,r')dudr'.$$
By continuity, all these equalities hold for all $t\in[0,T].$ Hence, $p_t\0(r)$ and $\bar{u}_t\0(r)$ are equal to $p_t(r)$ and $\bar{u}_t(r)$ defined by \eqref{pq}, and therefore from now on we omit the superscript 0.
At last, by sending $\delta\to 0$ in \eqref{3.5} and \eqref{soldelta2} we show that $\rho_t(u,r)$ solves \eqref{solpde1}-\eqref{solpde2}.   

We claim that $\rho_t(u,r)$ is a weak solution for \eqref{PDE}-\eqref{PDEbound1} with $v_0=\psi_0$ and $v_1$ as in \eqref{PDEbound2}. This will be a direct consequence of the
\begin{lema}
Let $\rho_t(r,u)$ be difined as in \eqref{solpde1}-\eqref{solpde2}, then for any real valued test function $\phi$ on $\R_+\times [0,1)^2,$
\begin{multline}
\label{lemaweaksol}
\int_{[0,1)^2}\int_{0}^{\infty}\phi(u,r)\rho_t(u,r)dudr=\int_{[0,1)^2}\int_{0}^t\phi(T_{s,t}(0),r)q_s(r)\exp\Big\{-\int_{s}^t\varphi(T_{s,h}(0),r)dh\Big\}dsdr \\
+\int_{[0,1)^2}\int_{0}^{\infty}\phi(T_{0,t}(u),r)\psi_0(u,r)\exp\Big\{-\int_{0}^{t}\varphi(T_{0,s}(u),r)ds\Big\}dudr.
\end{multline}
\end{lema}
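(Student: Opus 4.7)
The proof is essentially a change of variables exercise, exploiting the explicit representations \eqref{solpde1} and \eqref{solpde2} together with the explicit form of the characteristic flow \eqref{flowChar}. The plan is to split the inner integral on the left-hand side of \eqref{lemaweaksol} at $u=T_{0,t}(0,r)$. The region $\{u\geq T_{0,t}(0,r)\}$ corresponds to neurons that did not spike in $[0,t]$ and are transported from their initial value by the flow, while the region $\{0\leq u<T_{0,t}(0,r)\}$ corresponds to neurons that performed their last spike at some time $s\in(0,t)$ and were then transported by the flow starting from $0$.

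For the first region, I would perform the change of variables $u = T_{0,t}(u_0,r)$ with $u_0\in[0,\infty)$. From \eqref{flowChar} one reads off $\frac{\partial}{\partial u_0} T_{0,t}(u_0,r) = e^{-(\alpha+\lambda_r)t}$ and, by the semigroup (composition) property of the flow, $T^{-1}_{s,t}(T_{0,t}(u_0,r),r)=T_{0,s}(u_0,r)$. Substituting into \eqref{solpde1} and multiplying by the Jacobian $e^{-(\alpha+\lambda_r)t}$, the term $e^{(\alpha+\lambda_r)t}$ coming from the $(-\alpha-\lambda_r)$ part of the exponent cancels exactly, leaving
\[
\int_{T_{0,t}(0,r)}^{\infty}\phi(u,r)\rho_t(u,r)du
=\int_0^\infty \phi(T_{0,t}(u_0,r),r)\,\psi_0(u_0,r)\exp\Bigl\{-\int_0^t\varphi(T_{0,s}(u_0,r),r)\,ds\Bigr\}du_0,
\]
which, upon integration in $r$, is the second term on the right-hand side of \eqref{lemaweaksol}.

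For the second region, I would parametrize by the spike time: set $u=T_{s,t}(0,r)$ for $s\in(0,t)$. Again from \eqref{flowChar} one computes
\[
\frac{\partial}{\partial s}T_{s,t}(0,r)=-e^{-(\alpha+\lambda_r)(t-s)}\bigl[\lambda_r\bar u_s(r)+p_s(r)\bigr],
\]
so $du$ equals the absolute value of this times $ds$, and the orientation flips since $s=0$ corresponds to $u=T_{0,t}(0,r)$ while $s=t$ corresponds to $u=0$. Plugging \eqref{solpde2} into the integral, the denominator $\lambda_r\bar u_s(r)+p_s(r)$ of $\rho_t$ is cancelled by the matching factor in the Jacobian, while the factor $e^{-(\alpha+\lambda_r)(t-s)}$ in the Jacobian cancels the $e^{(\alpha+\lambda_r)(t-s)}$ produced by the $(-\alpha-\lambda_r)$ part of the exponent in \eqref{solpde2}. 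What remains is exactly
\[
\int_0^{T_{0,t}(0,r)}\phi(u,r)\rho_t(u,r)du
=\int_0^t \phi(T_{s,t}(0,r),r)\,q_s(r)\exp\Bigl\{-\int_s^t\varphi(T_{s,h}(0,r),r)\,dh\Bigr\}ds,
\]
which matches, after integration in $r$, the first term on the right-hand side of \eqref{lemaweaksol}. Adding the two contributions yields the claimed identity.

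The computation is essentially routine once the correct change of variables is set up; the only subtle point is to verify the composition identity $T^{-1}_{s,t}\circ T_{0,t}=T_{0,s}$ (which follows from uniqueness of solutions to the characteristic ODE \eqref{Charac}), and to make sure that the Jacobians cancel the $(\alpha+\lambda_r)$-exponentials coming from \eqref{solpde1}-\eqref{solpde2}. The boundary point $u=T_{0,t}(0,r)$ between the two regimes has Lebesgue measure zero in $u$, so no extra care is needed there.
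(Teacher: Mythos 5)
Your proof is correct and takes essentially the same route as the paper: split the inner $u$-integral at $u^*_t(r)=T_{0,t}(0,r)$, change variables to the initial potential $v=T^{-1}_{0,t}(u,r)$ on the upper piece, and to the last spike time $s$ via $u=T_{s,t}(0,r)$ on the lower piece. The paper states these two substitutions without computation; you simply fill in the Jacobians, the cancellation of the $(\alpha+\lambda_r)$-exponentials, and the composition identity $T^{-1}_{s,t}\circ T_{0,t}=T_{0,s}$, all of which are correct.
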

\begin{proof}
Noticing that $u^*_t(r)=T_{0,t}(0,r)$,
we start writing 
\begin{multline}
\label{proofweaksol}
\int_{[0,1)^2}\int_{0}^{\infty}\phi(u)\rho_t(u,r)dudr=\int_{[0,1)^2}\int_{0}^{u^*_t(r)}\phi(u)\rho_t(u,r)dudr
\\
+\int_{[0,1)^2}\int_{u^*_t(r)}^{\infty}\phi(u)\rho_t(u,r)dudr. \hspace{2cm}
\end{multline} 
Now, using \eqref{solpde2} and making the change of  variables $v=T_{s,t}(0)$ in the first integral of the right-hand side of \eqref{proofweaksol}, we obtain the first integral of the right-hand side of \eqref{lemaweaksol}.

To complete the proof we use \eqref{solpde1} and make the change of variable  $v=T^{-1}_{0,t}(u,r)$ in the second integral of the right-hand side of \eqref{proofweaksol}.
\end{proof}

Immediately from \eqref{lemaweaksol} follows that for every test function $\phi$, $\int_{0}^{\infty}\phi(u)\rho_t(u,r)du$ is differentiable in $t$ with its derivative fulfilling \eqref{Weaksol}. Furthermore, taking $\phi(u,r')=a(r',r)\varphi(u,r')$, $\phi(u,r')=b(r',r)u$ and $\phi(u)=\varphi(u,r)$, we conclude that the functions $\bar{u}_t(r)$, $p_t(r)$ and $q_t(r)$ are differentiable in $t$ and also that $\rho_t(u,r)$ is differentiable in $t$ and $u$ in the set $\R_+\times \R_+\times [0,1)^2\setminus \{(t,u,r):u=T_{0,t}(0,r)\}$. Thus, as a consequence of  \eqref{Weaksol} $\rho_t(u,r)$ satisfies \eqref{PDE} is this set, having also the boundary conditions \eqref{PDEbound1} $v_0=\psi_0$ and $v_1$ provided in \eqref{PDEbound2}.

We shall focus on the uniqueness for \eqref{Weaksol}. Once uniqueness is proven, we have as a by product that limit $\rho_t(u,r)$ does not depend on the converging subsequence, having in this way full convergence.
For all smooth functions $\phi$, setting $g(t,r,du)=\rho_t(u,r)du,$ we rewrite \eqref{Weaksol} in the following way,
\begin{multline}
\label{weaklaw}
\partial_t\int_{0}^{\infty}\phi(u)g(t,r,du)=\int_{0}^{\infty}\phi'(u)[-\al u - \la_r(u-\bar{u}_t(r))+p_t(r)]g(t,r,du)\\
+\int_{0}^{\infty}\varphi(u,r)[\phi(0)-\phi(u)]g(t,r,du),\hspace{1.5cm}
\end{multline}
where $g(0,r,du)=\psi_0(u,r)du$ and 
\begin{equation*}
\bar{u}_t(r)=\int\int ub(r',r)g(t,r',du)dr', \ p_t(r)=\int\int a(r',r)\varphi(u,r')g(t,r',du)dr' .
\end{equation*}
Now consider the function $\mathcal{L}_{\bar{u}_t(r),p_t(r)}$ defined on $C^1(\R_{+},\R)$, by
\begin{equation}
\label{timedepgen}
\mathcal{L}_{\bar{u}_r(t),p_t(r)}\phi(u)=\varphi(u,r)[\phi(0)-\phi(r)]+\phi'(u)[-\al u - \la_r(u-\bar{u}_t(r))+p_t(r)],
\end{equation}
and then define a stochastic process $(\U(t))_{t\geq 0}$,
$$\U(t)=\big(\U_r(t),r\in[0,1)^2\big),$$ in which, for any $r_1,\ldots, r_n\in [0,1)^2,$
\begin{enumerate}
\item the collection of stochastic processes  $(\U_{r_1}(t))_{t\geq 0},\ldots,  (\U_{r_n}(t))_{t\geq 0}$ are independent and
\item for all $r\in [0,1)^2$, the function $\mathcal{L}_{\bar{u}_t(r),p_t(r)}$ is the time-dependent generator of the process $(U_r(t))_{t\geq 0}.$
\end{enumerate}
We then deduce from equations \eqref{weaklaw} and \eqref{timedepgen} that 
for all $r$ and $t$, $g(t,r,du)$ is the law of $\U_{r}(t).$ Notice that, by independence, the law of $(\U(t))_{t\geq 0}$ is determined by the collection of laws $$\big\{g(t,r,du):t\geq 0, \ r\in[0,1)^2 \big\}.$$
On the other hand, given a stochastic processes $(\U(t))_{t\geq 0},$ $\U(t)=(\U_r(t),r\in[0,1)^2),$ fulfilling item $(i)$ above and whose time-dependent generator $\mathcal{L}_{k_t(r),h_t(r)}$ and law $p(t,r,du)$ of $U_r(t)$ are such that:
\begin{enumerate}
\item for any $r\in[0,1)^2$, $\R_{+}\ni t\mapsto p(t,r,du)$ is a continuous function; 
\item for any $t\geq 0$, $[0,1)^2\ni r\mapsto p(t,r,du)$ is a measurable function; 
\item for all $r$ and $t\leq T$, the support of $p(t,r,du)$ is contained in $[0,C_T].$
\item $k_t(r)=\int\int ub(r',r)p(t,r',du)dr', \ h_t(r)=\int\int a(r',r)\varphi(u,r')p(t,r',du)dr' $;
\item  $\mathcal{L}_{k_t(r),h_t(r)}\phi(u)=\varphi(u,r)[\phi(0)-\phi(r)]+\phi'(u)[-\al u - \la_r(u-k_t(r))+h_t(r)],\phi \in C^1(\R_{+},\R),$ 
\end{enumerate} 
then is easy to check that the law $p(t,r,du)$ satisfies \eqref{weaklaw} replacing the functions $\bar{u}_t(r)$ and $p_t(r)$ respectively by $k_t(r)$ and $h_t(r)$.

Therefore, from these considerations it follows that the  uniqueness problem of \eqref{weaklaw} reduces to prove
\begin{prop}
Let $(\U_r(t))_{t\geq 0}$ and $(V_r(t))_{t\geq 0}$ be two stochastic processes having generators $\mathcal{L}_{k_t(r),h_t(r)}$ and $\mathcal{L}_{k'_t(r),h'_t(r)}$ and laws $p(t,r,du)$ and $q(t,r,du)$ satisfying conditions $(i)-(v).$ If $\U(0)=V(0)$, then for any $T>0$, $\U_r(t)=V_r(t),0\leq t\leq T$ almost surely.
\end{prop}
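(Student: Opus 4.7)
\emph{Proof plan.} The approach is to construct, for each $r\in[0,1)^2$, a pathwise coupling of $(\U_r(t))_{t\ge 0}$ and $(V_r(t))_{t\ge 0}$ that makes them jump together as often as possible, and then to close a Gronwall inequality on the expected distance $\Delta_r(t):=\E[|\U_r(t)-V_r(t)|]$. The output of the Gronwall step is $\Delta_r(t)\equiv 0$ on $[0,T]$; together with right-continuity of trajectories this gives the almost sure equality.

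The coupling is the standard basic coupling for piecewise-deterministic jump processes. Between jumps, $\U_r$ and $V_r$ follow their respective linear ODE's with coefficients $(k_t(r),h_t(r))$ and $(k'_t(r),h'_t(r))$. Jumps are split into a common component with rate $\min\{\varphi(\U_r(t),r),\varphi(V_r(t),r)\}$, at which both processes reset to $0$, and a decoupled component with rate $|\varphi(\U_r(t),r)-\varphi(V_r(t),r)|$, at which only the process with the larger current potential (equivalently, by monotonicity of $\varphi$, the larger firing rate) resets to $0$. Condition $(iii)$ ensures that under this coupling both $\U_r(t)$ and $V_r(t)$ remain in $[0,C_T]$. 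Since $\U(0)=V(0)$, we start from $\Delta_r(0)=0$.

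Now I bound $\Delta_r(t)$ by summing deterministic and jump contributions to $\tfrac{d}{dt}\Delta_r(t)$. The ODE for the difference reads
\[
\tfrac{d}{dt}(\U_r-V_r)=-(\al+\la_r)(\U_r-V_r)+\la_r(k_t(r)-k'_t(r))+(h_t(r)-h'_t(r)),
\]
so its contribution is at most $\la_r|k_t(r)-k'_t(r)|+|h_t(r)-h'_t(r)|$; common jumps can only decrease $|\U_r-V_r|$ and so contribute nonpositively; decoupled jumps, using the Lipschitz bound $|\varphi(u,r)-\varphi(v,r)|\le \|\varphi\|_{\mathrm{Lip}}|u-v|$ and the fact that the post-jump distance is at most $C_T$, contribute at most $C_T\|\varphi\|_{\mathrm{Lip}}\Delta_r(t)$. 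By condition $(iv)$, $p(s,r',\cdot)$ and $q(s,r',\cdot)$ are the marginal laws of $\U_{r'}(s)$ and $V_{r'}(s)$ under the coupling, so testing against the Lipschitz functions $u\mapsto u\,b(r',r)$ and $u\mapsto a(r',r)\varphi(u,r')$ yields
\[
\la_r|k_s(r)-k'_s(r)|+|h_s(r)-h'_s(r)|\le C_1\int_{[0,1)^2}\Delta_{r'}(s)\,dr'\le C_1\bar\Delta(s),
\]
where $\bar\Delta(s):=\sup_{r'\in[0,1)^2}\Delta_{r'}(s)$. Integrating and taking the supremum in $r$ gives $\bar\Delta(t)\le C_2\int_0^t\bar\Delta(s)\,ds$ on $[0,T]$; Gronwall then forces $\bar\Delta\equiv 0$, so $\U_r(t)=V_r(t)$ a.s.\ for every $t$ in a countable dense set, and right-continuity of the c\`adl\`ag trajectories upgrades this to pathwise equality on $[0,T]$.

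The main obstacle is the control of the decoupled-jump contribution: because such a jump can \emph{increase} $|\U_r-V_r|$ (from $|u-v|$ to $\max(u,v)$) rather than decrease it, there is no elementary contraction argument, and one must essentially invoke the uniform compact-support condition $(iii)$ together with the Lipschitz continuity of $\varphi$ to turn the jump effect into a linear bound in $\Delta_r$. A secondary technical point is the joint measurability in $r$ of the coupled family $\{(\U_r,V_r)\}_{r\in[0,1)^2}$, which is needed to justify interchanging expectation and $dr'$-integration when passing to $\bar\Delta$; this follows from a standard construction of the coupled jumps via a common Poisson random measure on $[0,1)^2\times \R_+\times \R_+$.
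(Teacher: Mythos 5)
Your proposal matches the paper's own argument: the same basic coupling (maximizing common jumps), the same differential inequality for $\E[|\U_r(t)-V_r(t)|]$ with the drift contribution controlled by $\lambda_r|k-k'|+|h-h'|$ and the decoupled-jump contribution controlled via the Lipschitz property of $\varphi$ together with the uniform support bound from condition (iii), and a Gronwall closure at the end. The only cosmetic difference is the final step: you pass to $\bar\Delta(s)=\sup_r\Delta_r(s)$ and apply Gronwall once, whereas the paper iterates the integral inequality in \eqref{integralform} $n$ times to produce a factorial tail $(\tilde C T)^n/n!$ before invoking Gronwall — both are standard and equivalent.
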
 
\begin{proof}
Fix $T>0$. Notice that by assumptions $U(0)=V(0)$ and $(iii)-(iv)$ there exists a constant $C>0$ such that $(k_t(r)+h_t(r))\wedge (k_t(r)+h_t(r))\leq C$, so that $t\leq T$,
\begin{equation}
\label{Ubounded}
|\U_r(t)-V_r(t)|\leq \int_{0}^{t}(\la_rk_s(r)+h_s(r))ds+ \int_{0}^{t}(\la_rk'_s(r)+h'_s(r))ds \leq 2TC. 
\end{equation}
Coupling $\U_r$ and $V_r$ such that they have the most common jumps possible, we obtain using conditions (i) and (ii) that
\begin{multline*}
\frac{d}{dt}\E\Big[|U_r(t)-V_r(t)|\Big]\leq \E\Big[|\varphi(U_r(t),r)-\varphi(V_r(t),r)|\Big(U_r(t)\wedge V_r(t)-|U_r(t)-V_r(t)|\Big)\Big]\\- \E\Big[\varphi(U_r(t),r)\wedge \varphi(V_r(t),r)|U_r(t)-V_r(t)|\Big]-(\al+\la_r)\E\Big[|U_r(t)-V_r(t)|\Big]\\
+\la_r|k_r(t)-k'_r(t)|+|h_r(t)-h'_r(t)|.\hspace{4cm}
\end{multline*}
Dropping the negative terms on the right-hand side of the inequality above, using the Lipschitz property of $\varphi$, \eqref{Ubounded}, condition $(iv)$, and writing $\gamma_r(t)=\E\Big[|U_r(t)-V_r(t)|\Big],$ we obtain for all $t\leq T$,
\begin{equation*}
\frac{d}{dt}\gamma_r(t)\leq ||\varphi||_{\rm Lip} 2TC \gamma_r(t)+\int_{[0,1)^2}\big[ \la^*b(r,r')\gamma_{r'}(t)+ \varphi^*a(r,r')\gamma_{r'}(t)\big]dr',
\end{equation*}
where $\varphi^*=||\varphi||_{\infty}$ and $\la^*=||\la||_{\infty}.$
From the inequality above, we conclude that for $t\leq T$
\begin{multline}
\label{integralform}
\gamma_r(t)\leq \tilde{C}\Big( \int_{0}^t \gamma_r(s)ds+\int_{0}^t\int_{[0,1)^2} b(r,r')\gamma_{r'}(s)dr'ds+\int_{0}^t\int_{[0,1)^2} a(r,r')\gamma_{r'}(s)dr'ds\Big),
\end{multline} 
where $\tilde{C}=\max\{||\varphi||_{\rm Lip} 2TC,\la^*,\varphi^*\}$
Iterating $n$ times the inequality in \eqref{integralform}, we get for all $t\leq T$
\begin{equation}
\gamma_r(t)\leq \tilde{C} \int_{0}^t \gamma_r(s)ds+ \frac{(\tilde{C}t)^n}{n!}\leq \tilde{C} \int_{0}^t \gamma_r(s)ds+ \frac{(\tilde{C}T)^n}{n!}
\end{equation}
Since $n$ is arbitrary, we get the the result by first letting $n\to\infty$ and then applying Gronwall's lemma. 
\end{proof}

In what follows, we shall finally prove that the true process converges to $ \rho_t (u,r)dudr$ in the hydrodynamic limit. 
The strategy is the following.
Recall
that ${\cal P}\e_{[0,T]}$ is the law on $D\big([0,T],\mathcal{S}^{\p}\big)$ of the processes $\mu\e_{[0,T]}$ and
let ${\cal P}_{[0,T]}$ be the measure valued process obtained as the limit by subsequences ${\cal P}_{[0,T]}^{(\varepsilon_i)}$.
By the tightness of ${\cal P}\e_{[0,T]}$, Proposition \ref{prop:4}, we have that this limit exits. 
Thus, the result will follow once we prove any limit measure ${\cal P}_{[0,T]}$ is supported by the deterministic trajectory $ \rho_t (u,r) dudr , t \in  [0, T], r\in [0,1)^2$ where $\rho_t(u,r) $ is the limit as $\delta\to 0$ of $ \rho_t^{(\delta ) } (u,r) $.

The following property will be used in the sequel.

\begin{prop}
\label{support}
Any weak limit $ {\cal P}_{[0,T]} $ of ${\cal P}_{[0,T]}^{(\varepsilon)}$  satisfies
$$ {\cal P}_{[0,T]} ( C ( [0, T] , {\cal S}' ) ) = 1 ,$$
where $C ( [0, T ], {\cal S}' ) $ is the space of all continuous trajectories $[0, T ] \to  {\cal S}' .$
\end{prop}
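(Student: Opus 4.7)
The plan is to reduce to a one-dimensional statement and then to show that the maximum jump of the real-valued process $t \mapsto \mu^{(\varepsilon)}_t(\phi)$ vanishes as $\varepsilon \to 0$, for every fixed $\phi \in \mathcal{S}$. Since $\mathcal{S}$ is a separable nuclear Fr\'echet space, a standard consequence of Mitoma's theorem (together with the fact that $\mathcal{S}'$ is endowed with the weak-$*$ topology generated by a countable dense family $\{\phi_k\}\subset \mathcal{S}$) is that $\mathcal{P}_{[0,T]}$ is supported on $C([0,T],\mathcal{S}')$ provided, for each $\phi \in \mathcal{S}$, the limit law of $t\mapsto \mu^{(\varepsilon)}_t(\phi)$ is supported on $C([0,T],\R)$. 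So I fix $\phi \in \mathcal{S}$ and work only with this real-valued projection.

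Next I would control the size of the jumps of $t\mapsto \mu^{(\varepsilon)}_t(\phi)$. Recall that between consecutive spikes this function evolves deterministically and continuously, so every jump comes from a spike of some neuron $i\in\Lambda_\varepsilon$. At such a spike the change is
\[
\mu^{(\varepsilon)}_t(\phi)-\mu^{(\varepsilon)}_{t-}(\phi)=\varepsilon^2\bigl[\phi(0,i)-\phi(\U^{(\varepsilon)}_i(t-),i)\bigr]+\varepsilon^2\sum_{j\neq i}\bigl[\phi(\U^{(\varepsilon)}_j(t-)+\varepsilon^2 a(i,j),j)-\phi(\U^{(\varepsilon)}_j(t-),j)\bigr].
\]
The first term is trivially bounded by $2\varepsilon^2\|\phi\|_\infty$. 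For the second, a first-order Taylor expansion using boundedness of $\partial_u\phi$ and $a$ gives that it is bounded by $\varepsilon^2 \cdot |\Lambda_\varepsilon|\cdot \varepsilon^2 \|a\|_\infty\|\partial_u\phi\|_\infty = \varepsilon^2\|a\|_\infty\|\partial_u\phi\|_\infty$, since $|\Lambda_\varepsilon|=\varepsilon^{-2}$. Consequently there is a constant $K(\phi)$ with
\[
\sup_{t\le T}\bigl|\mu^{(\varepsilon)}_t(\phi)-\mu^{(\varepsilon)}_{t-}(\phi)\bigr|\le K(\phi)\,\varepsilon^2\quad\text{almost surely.}
\]

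From here I would invoke the standard criterion (see e.g.\ Ethier--Kurtz or Billingsley): if $X^{(\varepsilon)}\Rightarrow X$ in $D([0,T],\R)$ and $\sup_{t\le T}|X^{(\varepsilon)}_t-X^{(\varepsilon)}_{t-}|\to 0$ in probability, then $X$ has continuous sample paths a.s. Concretely, the map $\omega\mapsto \sup_{t\le T}|\omega(t)-\omega(t-)|$ is upper semicontinuous on $D([0,T],\R)$, so by the portmanteau theorem the deterministic bound $K(\phi)\varepsilon^2$ passes to the limit and gives
\[
\mathcal{P}_{[0,T]}\Bigl(\bigl\{\omega:\sup_{t\le T}|\omega_t(\phi)-\omega_{t-}(\phi)|>\eta\bigr\}\Bigr)=0\qquad\text{for every }\eta>0.
\]
Hence, almost surely under $\mathcal{P}_{[0,T]}$, the trajectory $t\mapsto\omega_t(\phi)$ is continuous. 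Taking intersections over the countable family $\{\phi_k\}$ and using the Mitoma reduction of the first paragraph yields $\mathcal{P}_{[0,T]}(C([0,T],\mathcal{S}'))=1$.

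The only genuinely delicate point is the bookkeeping of the $\varepsilon$-powers in the jump estimate: one must be careful that although one spike triggers $|\Lambda_\varepsilon|=\varepsilon^{-2}$ infinitesimal increments $\varepsilon^2 a(i,j)$, each is weighted by $\varepsilon^2$ in $\mu^{(\varepsilon)}_t(\phi)$ and contributes only at first order in that small perturbation, so the three factors of $\varepsilon^2$ against one factor of $\varepsilon^{-2}$ leave a net $O(\varepsilon^2)$. No probabilistic estimate on the firing rate is needed because the bound is pathwise.
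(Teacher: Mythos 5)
Your argument matches the paper's: both bound the maximal jump of $t\mapsto\mu^{(\varepsilon)}_t(\phi)$ pathwise by $O(\varepsilon^2)$ (one $\varepsilon^2$-weighted reset term plus $\varepsilon^{-2}$ increments of size $\varepsilon^2 a(i,j)$, each weighted by $\varepsilon^2$), then pass this deterministic bound to the weak limit using the (semi-)continuity of the maximal-jump functional on Skorohod space, and conclude by exhausting a countable family of test functions. Your write-up is slightly more careful in naming the relevant constants ($\|\phi\|_\infty$, $\|\partial_u\phi\|_\infty$, $\|a\|_\infty$) where the paper's displayed bound has an apparent typo, and in making the Mitoma reduction explicit, but the approach is the same.
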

\begin{proof}
For each $\phi\in \mathcal{S}'$, consider the function  on $D([0,T],\mathcal{S}')$ given by
\begin{equation}
\Delta_{\phi}(x)=\sup_{t\in [0,T]}\big|x_t(\phi)-x_{t_{-}}(\phi)\big |.
\end{equation}
It is not difficult to prove that the function $\Delta_{\phi}$ is continuous in the Skorohod norm (see for instance De Masi and Presutti (1991), section 2.7). 
Then for any $\zeta>0$, by Chebyshev's inequality and the weak convergence 
\begin{eqnarray*}
\mathcal{P}(\{x: \Delta_{\phi}(x)>\zeta \})
\leq \zeta^{-1}\lim_{\ep\to 0}\mathcal{P}^{(\ep)}_{[0,T]}\big[\Delta_{\phi}(\mproT)\big].
\end{eqnarray*}
If there are no spikes at $t$, then it is clear that $$|\mu\e_t(\phi)-\mu\e_{t_{-}}(\phi)\big |=0.$$ 
On the other hand, if $j$ spikes at $t$, then 
\begin{multline*}
|\mu\e_t(\phi)-\mu\e_{t_{-}}(\phi)\big |=\ep^2\phi(\U\e_i(t_{-}))\\ +\ep^2\sum_{i\neq j}\big|\phi(\U\e_i(t_{-})+\ep^2a(j,i))-\phi(\U\e_i(t_{-}))\big|
\leq \ep^2\varphi^*+\ep^2\varphi^*a^*,
\end{multline*}
where in the above inequality we have used the smoothness of $\phi$. Thus, it follows that
$\lim_{\ep\to 0}\mathcal{P}^{(\ep)}_{[0,T]}\big[\Delta_{\phi}(\mu\e_{[0,T]})\big]=0,$
so that $\mathcal{P}(\{x: \Delta_{\phi}(w)>\zeta \})=0$. By the arbitrariness of $\zeta$, we deduce that $\mathcal{P}(\{x: \Delta_{\phi}(x)=0 \})=1.$ Now by the arbitrariness of $\phi,$ we conclude the proof of the theorem.
\end{proof}

We denote by $ \omega = ( \omega_t , t \in [0, T ] )$ the elements of the set $C ( [0, T ], {\cal S}' )$. Fix now $t\in {\cal T} $ and
suppose that $ {\cal P}_{[0,T]} $ is the weak limit of $ {\cal P}_{[0,T]}^{(\ep_i)} $. It will be shown now that  $ {\cal P}_{[0,T]} $ is supported by
$\{\omega:\omega_t=\rho_t(u,r)dudr\}$. Hence $ {\cal P}_{[0,T]} $ is equal to $\rho_t(u,r)dudr$ on the all rational number of $[0,T]$, so that
by continuity on all $t\in [0,T]$ which implies that weak limit  of $ {\cal P}^{(\ep)}_{[0,T]} $ is supported
by $\rho_t(u,r)dudr$.

In what follows, $ t \in {\cal T}$ and $\delta \in \{2^{-n}T, n\ge 1\}$. Fix $\zeta > 0$. Since, by Proposition \ref{support}, the support of $\mathcal{P}_{[0,T]}$ is contained in $C\big([0,T],\mathcal{S}'\big)$ and the projection is a continuous map in $C\big([0,T],\mathcal{S}'\big)$, we can use the Converging Map Theorem, see Billingsley Theorem 2.7, to deduce that 
\begin{equation}
\label{dif0}
\mathcal{P}_{[0,T]}\Big(w:|w_t(\phi)-\int\phi\rho_tdudr|>\zeta\Big)=\lim_{\ep\to 0}\mathcal{P}^{(\ep)}_{[0,T]}\big(|\mu_t^{(\ep)}(\phi)-\int\phi\rho_tdudr|>\zeta\Big).
\end{equation}
Moreover, given any $\eta>0$, 
for any $\delta$ fixed and $\ell,E,\tau$ sufficiently small we have by the Dominated Convergence Theorem that
\begin{equation}
\label{dif1}
\Big|\int\phi\rho\d_tdudr-\int\phi\rho\Prm_tdudr\Big|<\eta.
\end{equation}
For the same reason, for all $\delta$ small enough
\begin{equation}
\label{dif2}
\Big|\int\phi\rho_tdudr-\int\phi\rho\d_tdudr\Big|<\eta.
\end{equation}
Next, we fix $(\delta,\ell,E,\tau)$ such that \eqref{dif1} and \eqref{dif2} hold and then apply Corollary \eqref{hydro} for $\ep$ small enough to get that
\begin{equation*}
\label{dif3}
\Big| \tilde{E}^{(\ep)}(\nu\Prm_{t}(\phi))-\int\phi\rho\Prm_tdudr\Big|<\eta.
\end{equation*}
Furthermore, by \eqref{eq:445'} for all $\ep$ sufficiently small,
\begin{equation*}
\label{dif4}
Q_u^{(\ep)}\left[\Big|\mu\e_t(\phi)-\nu\Prm_{t}(\phi))\Big|\right]
< \eta.
\end{equation*}  
Collecting the above estimates and by
the arbitrariness of $\eta$, we then get 
\begin{equation*}
\mu\e_t(\phi)\xrightarrow{w} \int\phi\rho_tdudr \ \mbox{as} \ \ep\to 0,
\end{equation*}
and, therefore,
\begin{equation*}
\lim_{\ep\to 0}\mathcal{P}^{(\ep)}_{[0,T]}\big(|\mu\e_t(\phi)-\int\phi\rho_tdudr|>\zeta\Big)=0.
\end{equation*}
From \eqref{dif0}, it follows that $\mathcal{P}_{[0,T]}\Big(w:|w_t(\phi)-\int\phi\rho_tdudr|>\zeta\Big)=0.$ Since $\zeta$ is arbitrary  we conclude the proof of Theorem \ref{thm:2}.   

In order to complete the prove of Theorem \ref{thm:3} we have to show that
\begin{equation}
\lim_{u\nearrow u^*_t(r)}\rho_t(u,r) = \psi_0 \left(T^{-1}_{0,t}(u^*_t(r),r),r\right)
\exp \left\{ - \int_0^t \left[ \varphi\left( T^{-1}_{s,t}(u^*_t(r),r),r\right) - \alpha - \lambda(r) \right] ds  \right\},
\end{equation}
where $u^*_t(r)=T_{0,t}(0,r).$ For $u<u^*_t(r)$ we recall that,
\begin{equation}
\label{rhosol22}
\rho_t (u,r) = \frac{q_s(r)}{p_s(r)+\lambda(r)\bar u_s(r)} \exp \left\{ -\int_{s}^t \varphi\left(T_{s,h}(0,r),r\right)-\al-\la(r)dh\right\}
\end{equation}
$s$ being such that u=$T_{s,t}(0,r).$ Using the continuity
$$ \lim_{s\to 0}T_{s,h}(0,r)=T_{0,h}(0,r)=T^{-1}_{h,t}(u^*_t(r),r).$$
Since we have already shown the continuity of $q_s(r)$, $p_s(r)$ and $\bar{u}_s(r)$,
$$ \lim_{s\to 0}\frac{q_s(r)}{p_s(r)+\la_r\bar{u}_s(r)}=\frac{q_0(r)}{p_0(r)+\la_r\bar{u}_0(r)}=\psi_0(0,r)=\psi_0(T^{-1}_{0,t}(u^*_t(r),r),r).$$
Taking the limit as $s\to 0$ in \eqref{rhosol22} we finish the proof of Theorem \ref{thm:3}.
\hfill $\bullet $

%
%
%
%

\section{Acknowledges}

We are in debt to professor E. Presutti for uncountable illuminating discussions. We also thank A. De Masi for helpful discussions. 
A. Duarte and G. Ost also thank professor E. Presutti for all the teaching, attention and hospitality during their visiting in GSSI. 
This article  was produced as part of the activities of FAPESP  Research, Innovation and Dissemination Center for Neuromathematics (grant 2011/51350-6) , S.Paulo Research Foundation). A. Duarte is supported by a CNPq fellowship (grant 141270/2013-6) and G. Ost is supported by a CNPq fellowship (grant 141482/2013-3). A.A. Rodr\'iguez is supported by GSSI.

\section{Appendix 1: proof of Theorem \ref{thm:4}}
\label{proof thm 4}

The proof follows the same the steps of the proof of Theorem \ref{thm:4} of \cite{Errico:14}. We start providing an estimate of the the total number of spikes  for both processes $\U\e$ and $Y^{(\ep,\delta,\ell,E\tau)}$ in the interval $[0,T].$ Recall that $Q^{(\ep)}_u$ is the probability law governing the coupled process in which $\U\e(0)=u$ and $Y^{(\ep,\delta,\ell,E\tau)}_i(u)=\Phi_0(u_i)$ for all $i\in\Lamepi.$
\begin{prop}
\label{numDeDispEmDelta}
Let $A_{[0,T]}$ be the event when either $\U\e$ or $Y^{(\ep,\delta,\ell,E\tau)}$ have more than $2\varphi^*\epsilon^{-2}\delta$ spikes in some interval $[(k-1)\delta,k\delta)$, for $k=1,\ldots, T\delta^{-1}$. Then, under Assumption \ref{ass:3},
$$
Q_u^{(\ep)}\Big(A_{[0,T]}\Big)\le 2T\delta^{-1}e^{-\varphi^*\delta\epsilon^{-2}(3-e)},
$$
for any initial configuration $u\in \R_+^{\Lamepi}.$
\end{prop}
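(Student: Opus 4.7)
The plan is to reduce the problem to a Poisson tail bound plus a union bound. First I would note that, under Assumption \ref{ass:3}, the spiking intensity of each neuron in either process is bounded by $\varphi^*=\|\varphi\|_\infty$. Summing over the $\ep^{-2}$ neurons in $\Lambda_\ep$, the total spike counting process of $\U^{(\ep)}$ is stochastically dominated by a Poisson process of rate $\ep^{-2}\varphi^*$; this is exactly the content of part (ii) of Proposition \ref{thm:1bis}. The same stochastic domination holds for $Y^{(\ep,\delta,\ell,E,\tau)}$: by construction, its spikes on $[n\delta,(n+1)\delta)$ are obtained from independent exponential clocks $\xi_i$ of rate $\varphi(y_i,i_m)\le \varphi^*$, and only the first ring of each clock is kept, so the number of spikes in that block is again dominated by a Poisson$(\varphi^*\delta\ep^{-2})$ random variable.

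Next I would fix an index $k\in\{1,\dots,T\delta^{-1}\}$ and bound the probability that a single process has more than $2\varphi^*\ep^{-2}\delta$ spikes on $[(k-1)\delta,k\delta)$. Let $N\sim\mathrm{Poisson}(\lambda)$ with $\lambda=\varphi^*\ep^{-2}\delta$. The standard exponential Markov inequality with parameter $t=1$ gives
\begin{equation*}
\P(N>2\lambda)\;\le\;e^{-2\lambda}\,\E[e^{N}]\;=\;e^{-2\lambda+\lambda(e-1)}\;=\;e^{-\lambda(3-e)}\;=\;e^{-\varphi^*\delta\ep^{-2}(3-e)}.
\end{equation*}
Since $3-e>0$ this is the quantity appearing in the statement.

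Finally I would perform a union bound: $A_{[0,T]}$ is the union over the $T\delta^{-1}$ intervals and over the two processes of the events ``more than $2\varphi^*\ep^{-2}\delta$ spikes in this interval,'' each of which has probability at most $e^{-\varphi^*\delta\ep^{-2}(3-e)}$ by the previous paragraph. Hence
\begin{equation*}
Q_u^{(\ep)}(A_{[0,T]})\;\le\;2\,T\delta^{-1}\,e^{-\varphi^*\delta\ep^{-2}(3-e)},
\end{equation*}
which is the claimed bound. There is no real obstacle: the only small subtlety is verifying that the stochastic domination by Poisson$(\ep^{-2}\varphi^*)$ really does hold for the coupled algorithm of Section \ref{subsec5.1} (not just marginally for each process), but this is immediate because each clock used in the algorithm has intensity $\le\varphi^*$, independent of the others, and only finitely many are activated per neuron per block; thus both marginals are dominated by the same Poisson bound and the union bound goes through under $Q_u^{(\ep)}$.
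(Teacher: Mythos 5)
Your proof is correct and follows essentially the same route as the paper's: stochastic domination of the spike count by a $\mathrm{Poisson}(\varphi^*\delta\ep^{-2})$ variable, a Chernoff bound with parameter $t=1$, and a union bound over the two processes and the $T\delta^{-1}$ blocks. As a minor aside, your explicit Chernoff computation $\P(N>2\lambda)\le e^{-\lambda(3-e)}$ is the correct one matching the proposition's statement; the paper's displayed intermediate inequality has a stray factor of $2$ in the exponent that does not appear in the final bound.
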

\begin{proof}
Fix $k\in \{1,\ldots, T\delta^{-1}\}$ and let $N\big([(k-1)\delta,k\delta)\big)$ denote the number of spikes of the $\U\e$ process in the interval $[(k-1)\delta,k\delta).$ Then, under Assumption \ref{ass:3}, $N\big([(k-1)\delta,k\delta)\big)$ is stochastically bounded by 
$$
Z:=\sum_{j\in\Lamepi} N^*_j\big([(k-1)\delta,k\delta)\big)
$$
where $(N^*_j)_{j\in\Lamepi}$ are iid Poisson processes with intensity $\varphi^*$. Since $Z$ is distributed as a Poisson random variable with rate $\ep^{-2}\delta\varphi^*$,  it follows that 
$$
Q_u^{(\ep)}(N\big([(k-1)\delta,k\delta)\big)\geq 2\varphi^*\delta\epsilon^{-2} )\leq \P(Z\ge 2\varphi^*\delta\epsilon^{-2})\le e^{-2\varphi^*\delta\epsilon^{-2}(3-e)}.
$$
Bounding in the same manner the number of spikes of the $Y\d$ process in the interval $[(k-1)\delta,k\delta)$ and then summing over $k$ we complete the proof.
\end{proof}

From now on, we suppose that, in both processes $\U\e$ and $Y^{(\ep,\delta,\ell,E\tau)}$,
the spiking rate of each neuron is $\le \varphi^*$ and the number of spikes of all neurons
in any step $[(k-1)\delta,k\delta]$ is $\le 2\varphi^*\delta \ep^{-2}$. Moreover, writing $B^*=C+R_0+2a^*\varphi^*T$, then we also assume that for all $t\leq T$ and $k\delta\leq T$,
\begin{equation}
\label{defB}
||\U\e(t)||\leq B^*, \qquad ||\bar{U}\e(t)||\leq b^{*} B^*, \qquad ||Y^{(\ep,\delta,\ell,E,\tau)}(k\delta)||\leq B^*,
\end{equation}
where $\bar{\U}\e(t)=\big(\bar{\U}\e_i(t),i\in\Lamepi\big).$
By Assumption \ref{ass:2}, \eqref{boundass3} and Proposition \ref{numDeDispEmDelta} such assumptions provide a small error probability.

In what follows, $C$ is a constant which may change from one appearance to another. 
We shall now proceed as follows. We shall first control the increments of $\cB_k$.  We next provide an upper bound for $\theta_k$ and lastly we conclude the proof.
\subsubsection*{Controlling the increments of $\cB_n$:}

We start noticing that
 $$ |\cB_k| \leq |\cB_{k-1}| + | A_k^1 \cap \mathcal G_{k-1} |  + | A^2_k \cap \mathcal G_{k-1} | \le |\cB_{ k- 1 }| + |A^1_{k}|+ | A^2_k \cap \mathcal G_{k-1}|, $$ where $\mathcal G_{k-1}$ is the set of good labels at time $k\delta$ (recall Definition \ref{goodlabels}) and
\begin{itemize}
\item $A^1_k$ is the set of all labels $i$ for which the clocks $\xi_i^1$ and $\xi_i$ associated to label $i$ ring during $ [ (k- 1) \delta , k \delta ],$
\item
$A^2_k$ is the set of all labels $i$ for which a clock $\xi_i^{2}$ associated to label $i$ rings during $ [ (k- 1) \delta , k \delta ]$. 
\end{itemize}
Recall the definitions of the random clocks $\xi^1_i,\xi_i^2$ and $\xi_i$ appearing in the coupling algorithm given in Subsection \ref{subsec5.1}.
Our aim is to prove that
\begin{eqnarray}
\label{M_n1}
&& P \Big[ |A^1_k | > \epsilon^{-2}  (\delta \varphi^* )^2 \Big] \le e^{ - C \epsilon^{-2} \delta^4} ,
\\&&
\label{M_n2}
  P \Big[  | A^2_k \cap \mathcal G_{k-1} | > 2 C \epsilon^{-2} \delta \left[  \theta_{k-1} +  \delta +\ell\right]  \Big]
  \le e^{ - C \epsilon^{-2} \delta^4 },
    \end{eqnarray}
where the constant $C$ appearing in $\eqref{M_n1}$ and $\eqref{M_n2}$ may be different. 

Then,  from \eqref{M_n1} and \eqref{M_n2}, we deduce that with probability $\geq 1 -  2e^{ - C \epsilon^{-2}  \delta^4 }$, 
    \begin{equation}
    \label{eq:429}
|\cB_k| \le |\cB_{k-1}| + \epsilon^{-2}(\delta \varphi^* )^2 + 2 C \epsilon^{-2} \delta
\left[  \theta_{k-1} +  \delta \right]\le |\cB_{k-1}| + C \epsilon^{-2} \delta
\left[ \theta_{k-1} + \delta \right].
    \end{equation}
Iterating the  above bound and using that $ k \le T\delta^{-1},$ we immediately get that
with probability $\geq 1 - 2k  e^{ - C \epsilon^{-2}  \delta^4 } \geq 1 - \delta^{-1}C  e^{ - C \epsilon^{-2}  \delta^4 },$ 
\begin{equation}
\label{BoundB_k}
\varepsilon^2|\cB_k| \le \varepsilon^2|\cB_1|+ C\delta \sum_{h=1}^{k-1} (\theta_{h} + \delta), 
\end{equation}
where $C$ depends only on $T$. Since
by definition $\theta_{k} \le \theta_{k+1}$, we may bound the right-hand of \eqref{BoundB_k} by $ C(\theta_{k-1}+\delta),$ implying that with probability $\geq 1 - \delta^{-1}C  e^{ - C \epsilon^{-2}  \delta^4 }, $
\begin{equation}
\label{boundB_k}
\varepsilon^2|\cB_k|\leq C(\theta_{k-1}+\delta),
\end{equation} 
for each $k\leq T\delta^{-1}.$

{\it Proof of \eqref{M_n1}}.
The random variable $ |A^1_k |$ is stochastically dominated by $Z^*:= \sum_{ i\in\Lamepi} \one_{ \{ Z_i^*  \geq 2 \} } ,$ where $ Z_1^*, \ldots Z_N^*  $ are independent Poisson variables of parameter $\varphi^*\delta$. Thus, writing $p^* = P ( N_i^*  \geq 2 ) $, we have
   $$
   e^{ - \delta \varphi^* } \frac12 \delta^2 (\varphi^*)^2 \le   p^* \le \frac12  (\delta \varphi^*)^2,\quad
   p^* \approx   \frac{ 1}{2 }\,(\delta \varphi^*)^2  \mbox{ as } \delta \to 0 .
   $$
Therefore, $Z^*$ is the sum of $\ep^{-2}$ Bernoulli random variables, each having mean value $p^*$. Invoking
the Hoeffding's inequality, we get  \eqref{M_n1}.

%

{\it Proof of \eqref{M_n2}.}
We shall dominate stochastically the random variable
$ |A^2_k \cap \mathcal G_{k-1} | $ 
by
\begin{equation}
\label{BoundA_k}
\bar{Z}:=\sum_{i\in \Lamepi} \mathbf \one_{\{\bar Z_i \ge 1\}},
\end{equation}
where $\bar Z_i, i\in\Lamepi , $ are independent Poisson variables of parameter $ C ( \theta_{k-1} + \delta +\ell) \delta.$ Once \eqref{BoundA_k} is established,  \eqref{M_n2} will follow straightly.

Noticing that, since,
   \[
|A_k^2 \cap \mathcal G_{k-1}|\leq \sum_{i\in \Lamepi}
\mathbf \one_{\{\xi^2_i<\delta, i\in \mathcal G_{k-1}\}}, 
   \]
it suffices to show the intensity of each random clock $\xi^2_i$, $i\in \mathcal{G}_{k-1},$ is $\leq C ( \theta_{k-1} + \delta +\ell) \delta.$   

For that sake, we shall write 
$$
y:=Y^{(\ep, \delta, \ell, E, \tau)}((k-1)\delta), \quad 
u:=\U\e((k-1)\delta) \quad \mbox{and} \quad
u_t:=\U\e((k-1)\delta+t), \ \  t\in[0,\delta).
$$
Now, for any $i\in \mathcal G_{k-1}\cap C_m$, the intensity of $\xi_i^2$ is
   \[
    |\varphi(u_i(t),i) - \varphi(y_i,i_m) | \le \|\varphi \|_{Lip} \big[|u_i(t)-y_i|+\ell\big],
    \]
where $\|\varphi\|_{Lip}$ is the Lipschitz constant of the function $\varphi.$  
Denoting the number of spikes of $U_j$ in interval $[s,t]$ by $N_j\big([s,t]\big)$, we have
\begin{multline}
|u_i(t)-y_i|\le 
|u_i-y_i|e^{-t(\al+\la_i)} +y_i\Big(1-e^{-(\alpha+\lambda_i)\delta}\Big) + 
\lambda_i\int_0^t \bar{u}_i(s)e^{-(\alpha+\lambda_i)(t-s)}ds \\ +\ep^2\sum_{j\in\Lamepi}a(j,i)N_j\big([(k-1)\delta,(k-1)\delta+t]\big).\hspace{1cm}
\end{multline}
Since for all $i\in\Lamepi$, $y_i,\bar{u}_i(s)\leq B^*$ and $\sum_{j\in\Lamepi}a(j,i)N_j\big([(k-1)\delta,(k-1)\delta+t]\big)\leq 2(a\varphi)^*\ep^{-2}\delta,$ then if additionally $i\in\mathcal{G}_{k-1}$, it follows that
$$|u_i(t)-y_i|\leq \theta_{k-1}+(\al+\la_i)\delta+\la_i\delta + 2a^*\varphi^*\delta,$$ 
and thus
 $$
|\varphi(u_i(t),i) - \varphi(y_i,i_m) | \le \|\varphi\|_{Lip}\left(  \theta_{n-1}
+ 2(\al+\sup_i\la_i)\delta +2a^*\varphi^*+ \ell \right) \le C ( \theta_{k-1} + \delta +\ell),
 $$
which implies that
$$|A_k\cap \mathcal{G}_{k-1}|
\le    \sum_{i\in\Lamepi}
\mathbf \one_{\{\bar{Z}_i<\delta\}} \quad \text{stochastically,}
$$
where the $\bar{Z}_i$ are independent Poisson random variables of intensity $C(\theta_{k-1} +\delta+\ell)\delta$.
\subsubsection*{Estimates on  $\theta_k$:}

Notice that ${\cal G}_{k}={\cal G}_{k- 1} \cap (C_k \cup F_k )$ where:

\begin{enumerate}
\item $C_k$ is the set of all indexes $i$ whose associated  random clock  $\xi_i^{1}$ rings only once during $ [ (k- 1) \delta , k \delta ]$.
\item
$F_k$ is the set of indexes $i$ which did not spike during $ [ (k- 1) \delta , k \delta ] .$
\end{enumerate}
In what follows, we will make use  of the expression for membrane potential
$\U\e_i(t)$ of a neuron which did not spike in the interval $[s,t]$:
\begin{equation}
\label{formule}
\U\e_i(t) = e^{-(\al+\la_i)(t-s)}\U\e_i(s) +\la_i\int_{s}^t e^{-(\al+\la_i)(t-h)}\left\{\bar{\U}\e_i(h)dh + \frac{\ep^2}{ \la_i}\sum_{j\in\Lamepi}a(j,i)dN_j(h)\right\},
   \end{equation}
being $N_j(t)$ the total number of spike in the process $\U$ of neuron $j$  up time $t$.

\medskip
$\bullet$\;\;  Take $i\in C_k \cap {\cal G}_{k- 1}$.
In this case, we have that for some time $ t \in [(k-1)\delta,k\delta[$, the random clock 
$\xi_i^{1 }=t$. By \eqref{formule},
    $$
\U\e_i(k \delta)  = \la_i\int_t^{ \delta }
e^{ - (\al+\la_i) ( \delta  - s)} \bar \U\e_i (s) ds +
e^{ - (\al+\la_i)  \delta  }\ep^2\sum_{j\in\Lamepi} \int_t^\delta e^{(\al+\la_i )s} d   N_j(s)   ,
   $$
since $U\e_i(t_+)=0$. Noticing also that $||\bar{\U}\e (t)|| \le B^*$
and $N\big([(k-1)\delta,k\delta)\big) \le 2{\varphi}^*\delta \ep^{-2}$ we immediately see that
$U\e_i (k \delta ) \le C \delta$. By similar arguments, $ Y\Prm_i (k \delta )  \le C \delta ,$ 
so that
\begin{equation}
\label{eq:din2}
 D_i (k  ) \le  C \delta .
\end{equation}
Observe that the value $D_i(k-1)$ does not appear on the bound above. We shall now analyse the other case.

\medskip
$\bullet$\;\; Fix $i\in F_k \cap {\cal G}_{k- 1}$. Notice that the neuron $i$ is good at time $ (k-1) \delta $ and did not
spike in the time interval $[(k-1)\delta, k\delta)$ neither in the $U\e$ nor in the $Y\Prm$ processes.
As before, we write $ \U\e((k-1) \delta)= u $ and $ Y\Prm((k-1) \delta)= y.$
By \eqref{formule} and \eqref{flowy}, the variable  $|\U_i ( k \delta ) - Y\d_i ( k \delta ) | = D_i(k)$, $i\in C_m$, is bounded by
    \begin{multline}
    \label{eq:538}
D_i (k) \le  \left|e^{-\delta(\al+\la_i)}u_i - e^{-\delta(\al+\la_m)}y_i \right| \\ +\left|
 \int_{(k-1)\delta}^{k\delta} \la_i e^{- (\al+\la_i) (k\delta -t )}\bar \U\e_i (t)  dt - \la_{m}\int_{(k-1)\delta}^{k\delta} \bar y(m) e^{- (\al+\la_{m}) (k\delta -t )}  dt \right|\\
+ \left|\ep^2\sum_{j\in \Lamepi}a(j,i)\int_{(k-1)\delta}^{k\delta}  e^{-(\al+\la_j)(k\delta -t)}  dN_j(t)  - \ep^2\sum_{m'}a(i_{m'},i_m)\tilde{N}\big([(k-1)\delta, k\delta)\big) \right|,
\end{multline}
where $\tilde{N}\big([(k-1)\delta,k\delta)\big)$ denote the number of spikes of the $Y^{(\ep,\delta,\ell,E,\tau)}$ process in the interval $[(k-1)\delta,k\delta).$
Thus, it suffices to bound each term on the right hand side of \eqref{eq:538}. 

We start bounding the first one: 
\begin{eqnarray*}
|e^{-\delta(\al+\la_i)}u_i - e^{-\delta(\al+\la_{m})}y_i |\leq B^*\delta |\la_i-\la_{m}|+ e^{-(\al+\la_{m})\delta}|u_i-y_i|.
\end{eqnarray*}
Since, $|\la_i-\la_{m}|\leq ||\la||_{\rm Lip}\ell$, and supposing $\ell\leq \delta$, we can bound the last sum by $C\delta^2+\theta_{k-1}.$

Now let's bound the second term on the right-hand side of \eqref{eq:538}. It is easy to see that it is bounded by
\begin{multline*}
||\la||_{\rm Lip}B^*\ell\delta(1+\la_{m})+\la_{m}\delta |\bar y(m)-\bar u_{i_m}|+\la_{m}\int_{(k-1)\delta}^{k\delta}\Big[\big|\bar \U_i(t)-\bar u_i\big|+\big|\bar \U_{i_m}(t)-\bar u_{i_m}\big|\Big]dt .
\end{multline*}
To control the second and third terms we notice that for any $i\in\Lamepi$, $|\bar U_i(t)-\bar{u}_i|\leq C\delta$ and $|\bar U_i(t)-\bar y_i|\leq C\delta$. In addition, for any $i\in C_m$, $m=1,\ldots, \ell^2$, $|\bar U_i(t)-\bar u_{i_m}|\leq C\ell$. Requiring that $\ell\leq\delta$,  from these three inequalities we can bound the sum above by $C\delta(\delta +\theta_{k-1}).$ 

The argument to bound the third term on \eqref{eq:538} is a bit more tricky.  First we bound that term by
\begin{multline*}
\ep^2\sum_{j}a(j,i)\int_{(k-1)\delta}^{k\delta}(k\delta-t)(\al+\la_j)dN_j(t)+ \ep^2\sum_{m'}\sum_{j\in C_{m'}}\big|a(j,i)-a(i_{m'},i_m)\big|N_j\big([\delta(k-1),k\delta)\big)\\
+\ep^2\sum_{m'}a(i_{m'},i_m)\Big|N_{C_{m'}}\big([(k-1)\delta,k\delta)\big)-\tilde{N}_{C_{m'}}\big([(k-1)\delta,k\delta)\big)\Big|,
\end{multline*}
where $N_{C_{m'}}\big([(k-1)\delta,k\delta)\big)=\sum_{j\in C_{m'}}N_j\big([(k-1)\delta,k\delta)\big)$ is total number of spikes in the $\U\e$ process inside the square $C_{m'}$ during the time interval $[(k-1)\delta,k\delta))$ and $N_{C_{m'}}\big([(k-1)\delta,k\delta)\big)$ is the correspondent quantity associated to the $Y^{(\ep,\delta,\ell,E,\tau)}$ process. 

The first two terms above are easily bounded. One can check that the sum of the two can be bounded by $C\delta^2$. To control the third term,  we shall show that 
\begin{equation}
\Big|N_{C_{m'}}\big([(k-1)\delta,k\delta)\big)-\tilde{N}_{C_{m'}}\big([(k-1)\delta,k\delta)\big)\Big|\leq 4 (\varphi^*\delta)^2 \ep^{-2}\ell^2
\end{equation}
Indeed, its difference is smaller or equal to 
\begin{equation}
\label{formula9.13}
\sum\limits_{j\in C_{m'}\cap A^1_k}N_j([(k-1)\delta,k\delta)) + |C_{m'}\cap A^2_k |,
\end{equation}
so that it suffices to control this two terms. We star with the second one. We know that with probability $\geq 1-e^{ - C \epsilon^{-2} \delta^4 }$,
$$
|C_{m'}\cap A^2_k |=|C_{m'}\cap A^2_k \cap \mathcal{G}_{k-1}|+ |C_{m'}\cap A^2_k \cap \mathcal{B}_{k-1}|\leq  2Cl^2\ep^ {-2}\delta(\theta_{k-1}\delta)+ C\delta \ell^2 |\mathcal{B}_{k-1}|,
$$
where we used \eqref{M_n2} and that the number of neurons in $\mathcal{B}_{k-1}\cap C_{m'}$ which spiked in a time $\delta$ is dominated by a Poison random variable of rate $\varphi^*\delta |\mathcal{B}_{k-1}\cap C_{m'}|.$ Thus, it remains only to bound the first term in \eqref{formula9.13}. 

In order to do that, we start noticing that
\begin{multline}
   \label{eq:539c}
P\Big[ \sum_{j\in A^1_{k}\cap C_{m'}} N_j((k-1)\delta,k\delta) \ge 4 (\varphi^*\delta)^2 \ep^{-2}\ell^2\Big]
\le P\Big[ \sum_{j\in A^1_{k}\cap C_{m'}} N_j((n-1)\delta,n\delta)\\ \ge 4 (\varphi^*\delta)^2 \ep^{-2}\ell^2; |A^1_{k}\cap C_{m'}| \le
(\varphi^*\delta)^2 \ep^{-2}\ell^2 \Big] + P\Big[ |A^1_{k}\cap C_{m'}| >
(\varphi^*\delta)^2 \ep^{-2}\ell^2 \Big] .
   \end{multline}
The second term is controlled by the estimate on \eqref{M_n1}.  Let $A \subset C_{m'} $, $|A| \le (\varphi^*\delta)^2 \ep^{-2}\ell^2 ,$ then
   \[
P\Big[ \sum_{j\in A^1_{k}\cap C_{m'}} N_j((k-1)\delta,k\delta)\\ \ge 4 (\varphi^*\delta)^2 \ep^{-2}\ell^2\;|\; A^1_{k}\cap C_{m'} =A  \Big]
\le P^*\Big[ \sum_{j\in A}( N^*_j-2) \ge  2 (\varphi^*\delta)^2 \ep^{-2}\ell^2 \Big] ,
   \]
being $P^*$ the distribution of independent Poison random variables $N^*_j$, $j\in A$, each having parameter $\varphi^*\delta$
and
conditioned on being $N^*_j\ge 2$.  In this way, we easily get that 
   \[
 P^*[N^*_j-2 = k ]=  Z_\xi^{-1} \frac{\xi^k}{(k+2)!},\quad Z_\xi = \xi^{-2} \Big(e^\xi - 1 -\xi\Big),\quad
 \xi = \varphi^*\delta .
   \]
No let $X_1,X_2, \ldots, $ be a sequence of independent Poison variables with parameter $\xi$. It follows that
$N^*_j-2 \le X_j$ stochastically for $\xi$ small enough, hence
for $\delta$ small enough.  Indeed
for any integer $k$ we have
 \begin{equation}
   \label{eq:540a}
 P^*[N^*_j-2 \ge k ] \le P[ X_j \ge k]
   \end{equation}
because for $k\ge 1$,
   \[
P^*[N^*_j-2 \ge k ]  \le  \frac{2\xi^k}{(k+2)!},\quad
P[ X_j \ge k] \ge e^{-\xi} \frac{\xi^k}{k!} ,
   \]
hence \eqref{eq:540a} when $3e^{-\xi} \ge 2$.

Since
$X=\sum_{j\in A} X_j$ is a Poisson variable of parameter $|A| \xi \le (\varphi^*\delta)^2 \ep^{-2}\ell^2 \varphi^*\delta$
we have
   \[
 P^*\Big[ \sum_{j\in A}( N^*_j-2) \ge  2 (\varphi^*\delta)^2 \ep^{-2}\ell^2  \Big] \le P^*\Big[X \ge  2 (\varphi^*\delta)^2 \ep^{-2}\ell^2  \Big] ,
   \]
where the expectation $E^* (X) $ of $X$ is smaller (for $\delta$ small) than $(\varphi^*\delta)^2 \ep^{-2}\ell^2$.
As a consequence,    
\[
P^*\Big[ \sum_{j\in A}( N^*_j-2) \ge  2 (\varphi^*\delta)^2 \ep^{-2}\ell^2  \Big] \le e^{-C\epsilon^{-2} \delta^{2}\ell^2} .
   \]
To sum up, we have for $i\in F_k \cap {\cal G}_{k- 1}$ with probability $\ge 1- e^{-C\epsilon^{-2} \delta^{2}\ell^2}$,
    \begin{equation}
    \label{eq:538ter}
D_i (k) \le  \theta_{k-1}(1+ C \delta ) + C\delta  |\mathcal{B}_{k-1}|\ep^2+ C\delta^2.
  \end{equation}
The above inequality together with \eqref{eq:din2} guarantee
that with probability $\ge 1- e^{-C\epsilon^{-2} \delta^{2}\ell^2},$
    \begin{equation}
    \label{eq:538fourth}
\theta_k \le \max\{ C\delta; \theta_{k-1}(1+ C \delta ) + C\delta  |\mathcal{B}_{k-1}|\ep^2+ C\delta^2\} .
  \end{equation}

\vskip.5cm

\subsubsection*{Iteration on the bound of $\theta_k$:}
As a consequence of \eqref{boundB_k}, $ \ep^{2}|\mathcal{B}_k| \le C (\theta_{k-1} + \delta )$ for all $k\delta \le T$
with probability $ 1 - \delta^{-1}C  e^{ - C \epsilon^{-2}  \delta^4 }$.  As a by product of \eqref{eq:538fourth},
with probability  $ 1 - \delta^{-1}C  e^{ - C \epsilon^{-2}  \delta^4 }$, it follows that
     $$
\theta_k \le \max \Big( C \delta ,\left[ 1 + C \delta \right] \theta_{k-1} + C \delta^2 \Big) .
     $$
As a direct consequence (iterate the above inequality), it holds
\begin{equation*}
 \theta_k \le   C \sum_{ s=0}^{k-1} \left[ 1 + C \delta \right]^s  \delta^2 + (1 + C \delta)^k  C \delta,
\end{equation*}
and since, 
\begin{equation*}
 C \sum_{ s=0}^{k-1} \left[ 1 + C \delta \right]^s  \delta^2 + (1 + C \delta)^k  C \delta=C\delta [\left( 1 + C \delta \right)^k - 1]   + (1 + C \delta)^k  C \delta \\
\le  C e^{ C T } \delta\ ,
\end{equation*}
remember that $ k \delta \le T,$
we conclude that 
$$ \theta_k \le C \delta $$
for all $ \delta \le \delta_0, $ with probability $\geq 1 - \delta^{-1}C  e^{ - C \epsilon^{-2}  \delta^4 }$. This finishes the proof of Theorem \ref{thm:4}.

\section{Appendix 2: proof of proposition \ref{prop3} }
\label{proof prop3}
\begin{proof}
Fix $\phi\in \mathcal{S}$. By  (\ref{defB}), the left-hand side of \eqref{eq:445'} does not change if we consider $U^*(t)=\min\{\U\e(t),B^{*}\}$ and $Y^*(t)=\min\{Y^{(\ep,\delta,\ell,E,\tau)}(t),B^*\}$ instead of $\U\e(t)$ and $Y^{(\ep,\delta,\ell,E,\tau)}(t)$. 
Now, by the smoothness of the function $\phi$,  

$$Q_u^{(\ep)}\left[\Big|\ep^2\sum_{i\in C_m}\phi(U_i(t),i)-\ep^2\sum_{i\in C_m}\phi(Y_i(t),i_m)\Big|\right]\leq ||\varphi||_{\rm Lip}Q_u^{(\ep)}\Big[\ep^2\sum_{i\in C_m} |U^*(t)-Y^*(t)|\Big].$$
Applying the Theorem \ref{thm:4} and using that $|U^*(t)-Y^*(t)|\leq B^*$,  we get the desired upper bound in \eqref{eq:445'}.
\end{proof}    

\section{Appendix 3: proof of Theorem \ref{thm:5}}
\label{proof Thm5}
\begin{proof}

Let $\mathcal{F}_n$ be the sigma-algebra generated by the variables $\xi_i=\xi_i(k), k\leq n-1,i\in\Lamepi$ appearing in  \eqref{numberspikes}. Observe that all variables $Y^{(\ep,\delta,\ell,E,\tau)}(n\delta)$, $e\e_{n}(m)$, $S\e_{n+1}(m,h)$ and $S\e_{n+1}(m,\delta)$ are $\mathcal{F}_{n}-$ measurable.
In what follows, the constants $C,c_1$ and $c_2$ may change from appearance to another.  We also will write for simplicity $\tilde{E}\e=\tilde{E}_{Y^{(\ep,\delta,\ell,E,\tau)}(0)}^{(\ep)}$.

The proof is made by induction. For $n=0$, the proposition is easy to check. Indeed, notice that in this case $E\e_{0,k}=D\e_{0,k}$ . Moreover,  notice also that 
$$\zeta_{0,m}(D\e_{0,k})=\tilde{E}^{(\ep)}\big[\eta_{0,m}(E\e_{0,k})\big]=\sum_{i\in C_m}\int_{I_k}\psi_0(u,i)du $$
and that $\eta_{0,m}(E\e_{0,k})$ is a sum of $\ell^{2}\ep^{-2}$ independent Bernoulli random variables $X_i, i\in C_m$, where expected value of $X_i$ is $\int_{I_k}\psi_0(u,i)du.$
By Hoeffding inequality we deduce that
$$ \varepsilon^2 |\eta\e_{0,m}(E_{0,k}) -\zeta\e_{0,m}(D_{0,k})|>E\ell^2\varepsilon^{1/2}=C$$
with probability $\leq 2e^{-c_2\varepsilon^{-1}}$ where $c_2=2E^2\ell^2.$  
Therefore, 
it follows, for $n=0$, that the inequality above holds for all $k$ and $m$ with probability larger or equal to
\begin{equation*}
\label{probn=0}
1-c_1 e^{-c_2\varepsilon^{-1}},
\end{equation*}
establishing the Theorem in the case $n=0$. We now suppose that the result holds for $k\leq n$. Introduce  the set $G_n$ in which: 
\begin{itemize}
\item $\big| E\e_{n,k}-D\e_{n,k} \big| \le C\varepsilon^{1/2},$ $k=1,\ldots, |\mathcal{E}\e_{n}|$ 
\item $\varepsilon^2 \Big|\eta_{n,m}\Big(E\e_{n,k+\delta\tau^{-1}}\Big) -\zeta_{n,m}\Big(D\e_{n,k+\delta\tau^{-1}}\Big)\Big| \le E\ell^2\varepsilon^{1/2}, $ $k=1,\ldots, |\mathcal{E}\e_{n}|,$ and 
\item $\varepsilon^2 \Big|\eta_{n,m}\Big(E\e_{n,h}\Big) -\zeta_{n,m}\Big(D\e_{n,h}\Big)\Big| \le \tau\ell^2\varepsilon^{1/2}, \ h=1,\ldots, \delta\tau^{-1}.$ 
\end{itemize}
By the inductive hypothesis, $\tilde{P}\e(G_n)\geq 1-c_1e^{-c_2\ep^{-1/2}}.$

Since, 
\begin{multline*}
|E\e_{n+1,k+\delta\tau^{-1}}-D\e_{n+1,k+\delta\tau^{-1}}| \le  |E\e_{n,k}-D\e_{n,k}| + \lambda_m \delta \Big|\bar{y}\e_{n}(m)-e\e_{n}(m)\Big|\\ + \Big|S\e_{n+1}(m,\delta)-\tilde{E}^{(\ep)}\big[S\e_{n+1}(m,\delta)\big]\Big|,
\end{multline*}
we have that  on $G_n,$ 
\begin{multline*}
|E\e_{n+1,k+\delta\tau^{-1}}-D\e_{n+1,k+\delta\tau^{-1}}| \le C\ep^{1/2}+\Big|S\e_{n+1}(m,\delta)-\tilde{E}^{(\ep)}\big[S\e_{n+1}(m,\delta)\big]\Big|.
\end{multline*}
We shall show that there exist positive constants $c,c_1$ and $c_2$ not depending on $\ep$ such that
\begin{equation}
\label{diferenceonspikes}
\Big|S\e_{n+1}(m,\delta)-\tilde{E}^{(\ep)}\big[S\e_{n+1}(m,\delta)\big]\Big|\leq c\ep^{1/2},
\end{equation}
with probability $\geq 1-c_1e^{-c_2\ep^{-1}}$. For that sake, we first write 
$$N_{n+1}(m,k,\delta)=\sum\limits_{i\in C_m}\one_{\{\xi_i< \delta \}}, \ \xi_i \sim \exp(\varphi (E\e_{n,k},i_m) $$
and then by the conditional  version of Hoeffding's  inequality we deduce that 
\begin{equation}
\label{probcond1}
\tilde{P}^{(\ep)}(\ep^{2}\big| N_{n+1}(m,k,\delta)- \eta_{n,m}(E\e_{n,k})(1-e^{-\delta\varphi(E\e_{n,k},i_m)})\big|> E\ell^2 \ep^{1/2}|\mathcal{F}_n)\leq c_1e^{-c_2\varepsilon^{-1}}
\end{equation}
Since on $G_n$ 
$$|\varphi(E\e_{n,k},i_m)-\varphi(D\e_{n,k},i_m)|\leq C\ep^{1/2},$$
noticing that $N_{n+1}(m,\delta)=\sum_{k}N_{n+1}(m,k,\delta)$ and $\ep^2\zeta(D\e_{n,k})\leq 1,$ then it follows together with \eqref{probcond1} that there exist constants $C,c_1$ and $c_2$ such that
$$\tilde{P}^{(\ep)}\Big(G_n,\ep^{2}\big|S\e_{n+1}(m,\delta)-\tilde{E}\e\big[S\e_{n+1}(m,\delta)\big]\big|>C\ep^{1/2}\Big|\mathcal{F}_n\Big)\leq c_1e^{-c_2\ep^{-1}},$$
proving \eqref{diferenceonspikes}. 
Therefore,
$$\tilde{P}^{(\ep)}\Big(G_n,|E\e_{n+1,k+\delta\tau^{-1}}-D\e_{n+1,k+\delta\tau^{-1}}|>C\ep^{1/2}\Big|\mathcal{F}_n\Big)\leq c_1e^{-c_2\ep^{-1}}.$$
A similar argument may be used  to prove that we may replace in the probability above $E\e_{n+1,k+\delta\tau^{-1}}$ and $D\e_{n+1,k+\delta\tau^{-1}}$ respectively  by $E\e_{n+1,h}$ and $D\e_{n+1,h}$. Thus, summing over all $k$,$h$ and $m$ we prove the first part of Theorem \ref{thm:5} for $n+1.$

Now, we noticing that $\eta_{n+1}(m,k+\delta\tau^{-1})=\eta_{n}(m,k)-N_{n+1}(m,k,\delta)$ and remembering that by \eqref{numdet1}, $\zeta_{n+1}(m,k+\delta\tau^{-1})=\zeta_{n+1}(m,k)e^{-\delta\varphi\big(D\e_{n,k},i_m\big)},$ we easily see, together with \eqref{probcond1}, that
$$\tilde{P}^{(\ep)}\Big(G_n,\varepsilon^2 \big|\eta\e_{n+1}(m,k+\delta\tau^{-1}) -\zeta_{n+1}(m,k+\delta\tau^{-1})\big|>C\ep^{1/2}\Big|\mathcal{F}_n\Big)\leq c_1e^{-c_2\ep^{-1}},$$
for some suitable constants not depending on $\ep$. A similar argument shows that the same type of bound for $\varepsilon^2 \big|\eta\e_{n+1}(m,h) -\zeta\e_{n+1}(m,h)\big|$ also holds, finishing the proof of Theorem \ref{thm:5}. 

\end{proof}

\section{Appendix 4: proof of Theorem 2 for general firing rates}
\label{generalcase}

The proof is analogous to the proof presented in Appendix 4 of \cite{Errico:14}. For sake of completeness we shall give it here.

Let $\varphi, R, T$ and $C$ as in the statement of Theorem \ref{thm:1} and take $\phi$ be any bounded continuous functions on $D\big([0,T], \mathcal{S}' \big).$ We have to show that 
$$\lim_{\ep\to 0} \mathcal{P}^{(\ep)}_{[0,T]}(\phi)=\phi(\rho).$$

Let $A$ be the set $A=\{||U\e(t)||\leq C,t\in [0,T]\}.$ Theorem \ref{thm:1} implies that
\begin{equation}
\label{gen1}
\lim_{\ep\to 0 } \big| \mathcal{P}^{(\ep)}_{[0,T]}(\phi) - \mathcal{P}^{(\ep)}_{[0,T]}(\phi1_A)\big|=0.
\end{equation} 
Now, consider $\mathcal{P}^{(*,\ep)}_{[0,T]}$ the distribution of the process with a spiking rate $\varphi^*(\cdot,\cdot)$ which fulfils the Assumption \ref{ass:3} and it is equal to $\varphi$ for $u\leq C$. By definition, it follows that 
\begin{equation}
\label{gen2}
\mathcal{P}^{(\ep)}_{[0,T]}(\phi1_A)=\mathcal{P}^{(*,\ep)}_{[0,T]}(\phi1_A).
\end{equation}
Having proved Theorem \ref{thm:2} under the Assumption \ref{ass:3}, we get the desired convergence to a limit density $\rho^*=(\rho^*_tdudr)_{t\in[0,T]}$, for the process whose spiking rate is $\varphi^*$. 
It follows then, from \eqref{gen1} and \eqref{gen2}, that
$$\lim_{\ep\to 0 }\mathcal{P}^{(\ep)}_{[0,T]}(\phi)=\psi(\rho^*1_A).$$
We claim that $\rho^*=\rho^*1_A$. Indeed, by considering $\phi(w)=\sup\{w_t(1),t\leq T\} \wedge1,$
we immediately see that $1=\lim_{\ep\to 0 } \mathcal{P}^{(\ep)}_{[0,T]}(\phi)=\phi(\rho^*1_A).$ This last equalty implies that $\rho^*$ have support in $[0,C].$ As a consequence, 
$$\lim_{\ep\to 0 } \mathcal{P}^{(\ep)}_{[0,T]}(\phi)=\phi(\rho^*1_A)=\phi(\rho^*),$$
which concludes the proof of the Theorem. 
\bibliography{Bibli}{}
\bibliographystyle{plain}
\nocite{GalEva:13}
\nocite{Errico:14}
\nocite{Davis:84}
\nocite{Thieullen:12}
\nocite{Gerstner:2002:SNM:583784}
\nocite{Robert:14}
\nocite{AG:14}
\nocite{Evafou:14}
\nocite{ANAERRICO}
\nocite{mitoma}
\nocite{EvaGalves:15}
\nocite{mitoma:1983}
\nocite{billing}
\nocite{touboul2014}
\nocite{billing99}
\nocite{Presutti:08}
\nocite{Landim}

\end{document}